\documentclass{amsart}
\usepackage{xcolor}
\usepackage{mathrsfs}
\usepackage{amsmath,amsthm,amsopn,amstext,amscd,amsfonts,amssymb,mathrsfs,mathtools}
\usepackage{dsfont}
\usepackage{comment}
\usepackage{xcolor}
\usepackage{float}
\usepackage[active]{srcltx}
\usepackage{graphicx, mathdots}

  \usepackage{mathtools}

\newtheorem{definition}{\sc Definition}[section]
\newtheorem{theorem}{\sc Theorem}[section]
\newtheorem{conj}{\sc Conjecture}[section]
\newtheorem{lemma}{\sc Lemma }[section]

\newtheorem{coro}{\sc Corollary}[section]
\newtheorem{proposition}{\sc Proposition}[section]
\newtheorem{obs}{\sc Remark}[section]
\usepackage[active]{srcltx}

\newcommand{\dps}{\displaystyle}
\usepackage{graphicx, epsfig, subfig}
\usepackage{lscape}
\usepackage{rotating}
\usepackage{caption}
\usepackage{multicol}
\usepackage{colortbl}
\usepackage{nicematrix}
\newcommand*\pPq[5]{%
 \begingroup
 \begingroup\lccode`~=`,
   \lowercase{\endgroup\def~}{\pFcomma\mkern\pFqskip}%
 \mathcode`,=\string"8000
 {}_{#1}\phi_{#2}\left(\left.\genfrac..{0pt}{}{#3}{#4}\right|\,#5\right)%
 \endgroup
}
\newcommand{\prodint}[1]{\left\langle {#1}\right\rangle}

\usepackage{hyperref}
\newmuskip\pFqskip
\pFqskip=6mu
\mathchardef\pFcomma=\mathcode`, 

\makeatletter
\def\BState{\State\hskip-\ALG@thistlm}
\makeatother

\def\downbar#1{
\setbox10=\hbox{$#1$}
            \dimen10=\ht10 \advance\dimen10 by 2.5pt
            \ifdim \dimen10<15pt 
               \advance\dimen10 by -0.5pt
               \dimen11=\dimen10
               \advance\dimen10 by 2.5pt
               \lower \dimen11
            \else \lower \ht10 \fi
            \hbox {\hskip 1.5pt \vrule height \dimen10 depth \dp10}}
\def\upbar#1{
\setbox10=\hbox{$#1$}
            \dimen10=\ht10 \advance\dimen10 by \dp10 \advance\dimen10 by 2.5pt
            \ifdim \dimen10<15pt 
                \advance\dimen10 by 2pt \fi
            \raise 2.5pt \hbox {\hskip -1.5pt \vrule height \dimen10}}

\begin{document}
\title[Epilegomena to the study of semiclassical orthogonal polynomials]{Epilegomena to the study of semiclassical orthogonal polynomials}
\author{K. Castillo}
\address{CMUC, Department of Mathematics, University of Coimbra, 3001-501 Coimbra, Portugal}
\email{ kenier@mat.uc.pt}
\author{D. Mbouna}
\address{Department of Mathematics, Faculty of Sciences, University of Porto, Campo Alegre st., 687, 4169-007 Porto, Portugal}
\email{dieudonne.mbouna@fc.up.pt}
\subjclass[2010]{33D45}
\date{\today}
\keywords{}
\begin{abstract}
In his monograph [Classical and quantum orthogonal polynomials in one variable, Cambridge University Press, 2005 (paperback edition 2009)], Ismail conjectured that certain structure relations involving the Askey-Wilson operator characterize proper subsets of the set of all $\mathcal{D}_q$-classical orthogonal polynomials, here to be understood as the
Askey-Wilson polynomials and their limit cases. In this paper we give two characterization theorems for $\mathcal{D}_q$-semiclassical (and classical) orthogonal polynomials in consonance with the pioneering works by Maroni [Ann. Mat. Pura. Appl. (1987)] and Bonan, Lubinsky, and Nevai [SIAM J. Math. Anal. 18 (1987)] for the standard derivative, re-establishing in this context  the perfect ``symmetry" between the standard derivative and the Askey-Wilson operator. As an application, we present a sequence of  $\mathcal{D}_q$-semiclassical orthogonal polynomials of class two that disproves Ismail's conjectures. Further results are presented for Hahn's operator.
 \end{abstract}
\maketitle
\section{Introduction}
The term semiclassical for a special class of orthogonal polynomials was coined in 1984 by Hendriksen and van Rossum (see \cite{HR85}) during the Laguerre Symposium held at Bar-le-Duc, whose main speaker was Dieudonné. However, in the first line of his monumental work entitled {\em ``Une th\'eorie alg\'ebrique des polyn\^omes orthogonaux. Applications aux polyn\^omes orthogonaux semi-classiques''} (see \cite{M91}), Maroni referred to Shohat\footnote{On his recent visit to Portugal, C. Brezinski showed us the only known photo of J. Shohat, which will appear in his book on the history of orthogonal polynomials written in collaboration with M. Redivo-Zaglia.} (see \cite{S39}) as {\em ``l'inventeur des polyn\^omes semi-classiques''}. It was at the hands of Maroni that semiclassical orthogonal polynomials have become such a highly developed topic, although, as he himself points out, these sequences of orthogonal polynomials (OP) have always been present in certain structure relations which are as old as orthogonal polynomials themselves. Let us recall one of the best known problems in this regard. According to Al-Salam and Chihara (see \cite[p. 69]{AC72}), Askey raised the question of characterizing OP, $(P_n)_{n\geq 0}$, satisfying 
\begin{align}\label{Askey}
\phi\,P'_n=\sum_{j=-M}^N a_{n, j} P_{n+j} \qquad (c_{n,j}\in \mathbb{C};\, M, N \in \mathbb{N}),
\end{align}
$\phi$ being a polynomial which does not depend on $n$. In \cite{AC72} it was proved that the only OP that satisfy \eqref{Askey} for $M=N=1$ are the old classical polynomials, i.e., Jacobi, Bessel, Hermite, and Laguerre polynomials. However, for arbitrary values of $M$ and $N$, there are nonclassical OP for which \eqref{Askey} holds. For instance, in 1984 Koornwinder considered (see \cite{K84}), without explicit mention of this, OP, $(P_n^{\alpha,\beta,M_0,M_1})_{n\geq0}$, satisfying \eqref{Askey}, whose weight function is a linear combination of the Jacobi measure and two mass points at $-1$ and $1$: $(1-x)^\alpha(1+x)^\beta+M_0\delta(x+1)+M_1\delta(x-1)$ ($\alpha, \beta>-1; M_0, M_1\in \mathbb{N}$). Indeed,  as Koornwinder recently proved in a private communication (see also \cite{K23}),
\begin{align*}
&\int_{-1}^1 (1-x^2)^2\, \frac{\mathrm{d} P_n^{\alpha,\beta,M_0,M_1}}{\mathrm{d}x}(x) Q(x) (1-x)^\alpha (1+x)^\beta \mathrm{d}x\\[7pt]
&\quad +M_0 (1-x^2)^2\,\left.\frac{\mathrm{d} P_n^{\alpha,\beta,M_0,M_1}}{\mathrm{d}x}(x) Q(x)\right|_{x=1}\\[7pt]
&\quad+M_1 (1-x^2)^2 \,\left.\frac{\mathrm{d} P_n^{\alpha,\beta,M_0,M_1}}{\mathrm{d}x}(x) Q(x)\right|_{x=-1}=0,
\end{align*}
$Q$ being a polynomial of degree at most $n-4$. Hence
$$
(1-x^2)^2 \, \frac{\mathrm{d} P_n^{\alpha,\beta,M_0,M_1}}{\mathrm{d}x}(x)=\sum_{j=-3}^{3} c_{n,j} P_{n+j}^{\alpha,\beta,M_0,M_1}(x)\qquad (c_{n,j}\in \mathbb{C}).
$$
In 1985 (see \cite[Theorem 3.1]{M85} and \cite[Theorem 3.1]{M87}), Maroni, in his {\em prolegomena} to the study of semiclassical orthogonal polynomials, answers Askey's question by characterizing semiclassial OP\footnote{According to Maroni (see \cite[Definition 7.1]{M91}), the classical orthogonal polynomials are semiclassical of class $0$. However, subsequently, when we refer to semiclassical orthogonal polynomials, we exclude the classical ones.} (see also \cite{M99}). (Note that from \cite[Theorem 3.1]{M85} and \cite[Theorem 3.1]{M90} (see also \cite{KP97}), it follows immediately that Koornwinder's polynomials are semiclassical OP satisfying \eqref{Askey}.) In the same year, Bonan, Lubinsky, and Nevai solved a more general problem using analytic methods in the framework of orthogonality in the positive definite sense (see \cite[Theorem 1.1]{BLN87}).  Although much has been written about this, there are still outstanding problems that remain open. For instance, we may rewrite Askey's question by changing the standard derivative by the Askey-Wilson operator. Recall that usually the Askey-Wilson divided difference operator $\mathcal{D}_q:\mathcal{P}\to\mathcal{P}$ is defined by
\begin{align}\label{0.3}
\mathcal{D}_q\,f(x)=\frac{\breve{f}\left(q^{1/2} e^{i\theta}\right)
-\breve{f}\big(q^{-1/2} e^{i\theta}\big)}{\breve{e}\big(q^{1/2}e^{i\theta}\big)-\breve{e}\big(q^{-1/2} e^{i\theta}\big)},
\end{align}
where $\breve{f}(e^{i\theta})=f(\cos \theta)$ for each polynomial $f$, $e(x)=x$, and $\theta$ is not necessarily a real number (see \cite[Section 12.1]{I05}).  Here and subsequently, $0<q<1$ is assumed fixed. ($\mathcal{P}$ denotes the vector space of all polynomials  with complex coefficients.) Taking $e^{i\theta}=q^s$ in \eqref{0.3}, $\mathcal{D}_q$ reads
$$
\mathcal{D}_q\,f(x(s))=\frac{f(x(s+1/2))-f(x(s-1/2))}{x(s+1/2)-x(s-1/2)},
$$
with $x(s)=(q^s+q^{-s})/2$.
What is expected by most of the people working on the subject, in contradiction with the case of the standard derivative operator, is that the answer to this and other related questions be proper subsets of the set of all $\mathcal{D}_q$-classical orthogonal polynomials, here to be understood as the
Askey-Wilson polynomials and their limit cases. Recall that the Askey-Wilson polynomials (see \cite[Section 14.1]{K})
$$
p_n(x; a, b, c, d\,|\, q)=a^{-n}\,(ab, ac, ad; q)_n\, \pPq{4}{3}{q^{-n}, a b c b d q^{n-1}, a e^{i \theta},  a e^{-i \theta}}{ab, ac, ad}{q, q},
$$
where $x=\cos \theta$, are the $q$-analogues of the Wilson polynomials. (If we take $a=q^{1/2\alpha+1/4}$, $b=q^{1/2\alpha+3/4}$, $c=-a$, and $d=-b$, we get the continuous $q$-Jacobi polynomials. If we take $c=d=0$, we get the Al-Salam-Chihara polynomials.) In this sense, there are in the literature two well-known conjectures posed by Ismail (see \cite[Conjecture 24.7.8]{I05} and \cite[Conjecture 24.7.9]{I05}). 

\begin{conj}\label{conj1}
Let $(P_n)_{n\geq0}$ be a sequence of orthogonal polynomials and let $\phi$ be a polynomial which does not depend on $n$. If
\begin{align}\label{0.2Dq}
\phi\,\mathcal{D}_q\,P_n=\sum_{j=-1}^1 a_{n, j} P_{n+j}  \qquad (a_{n,j}\in \mathbb{C}),
\end{align}
then $P_n$ is a multiple of the continuous $q$-Jacobi polynomials or Al-Salam-Chihara polynomials, or special or limiting cases of them. The same conclusion holds if 
\begin{align}\label{0.2Dq-general}
\phi\,\mathcal{D}_q\,P_n=\sum_{j=-M}^N a_{n, j} P_{n+j}\qquad (a_{n,j}\in \mathbb{C};\, M, N \in \mathbb{N}).
\end{align}
\end{conj}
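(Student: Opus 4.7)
The plan is to pursue Maroni's distributional program adapted to the Askey-Wilson operator. Let $\mathbf{u}$ denote the regular moment linear functional orthogonalizing $(P_n)_{n\geq 0}$, normalized by $\langle \mathbf{u},1\rangle=1$, and let $(\mathbf{a}_n)_{n\geq 0}$ be its dual basis in $\mathcal{P}^*$, so that $\mathbf{a}_n = h_n^{-1}P_n\,\mathbf{u}$ with $h_n=\langle\mathbf{u},P_n^2\rangle$. The first objective is to transform the polynomial identity \eqref{0.2Dq} into a distributional Pearson-type equation $\mathcal{D}_q(\phi\mathbf{u}) = \psi\mathbf{u}$. This would use the transpose of $\mathcal{D}_q$ together with the Leibniz-type identity $\mathcal{D}_q(fg)=(\mathcal{D}_q f)(\mathcal{S}_q g)+(\mathcal{S}_q f)(\mathcal{D}_q g)$, where $\mathcal{S}_q$ is the averaging operator companion to $\mathcal{D}_q$, namely $\mathcal{S}_q f(x(s))=\tfrac{1}{2}(f(x(s+1/2))+f(x(s-1/2)))$.

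Concretely, I would pair \eqref{0.2Dq} with the dual elements $\mathbf{a}_k$ for each $k$, move $\mathcal{D}_q$ off $P_n$ via its transpose, and read off the action of ${}^t\mathcal{D}_q(\phi\mathbf{u})$ on an arbitrary test polynomial. A degree count in \eqref{0.2Dq}, combined with the three-term recurrence for $(P_n)$, forces $\deg\phi\leq 2$ and $\deg\psi=1$, so that $\mathbf{u}$ is $\mathcal{D}_q$-classical in Maroni's sense, i.e., of class zero. The classification of $\mathcal{D}_q$-classical regular functionals then identifies $(P_n)_{n\geq 0}$, up to an affine change of variable, as Askey-Wilson polynomials or a limiting case thereof.

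The final and most delicate step for the $M=N=1$ part is to cut the Askey-Wilson family down to the continuous $q$-Jacobi and Al-Salam-Chihara subclasses singled out in Conjecture~\ref{conj1}. Here I would expand $\phi\,\mathcal{D}_q\,p_n(\cdot;a,b,c,d\,|\,q)$ explicitly in the Askey-Wilson basis and match against \eqref{0.2Dq}; the tridiagonality of the right-hand side should translate into parameter symmetries of $(a,b,c,d)$ (such as $c=-a$, $d=-b$, yielding continuous $q$-Jacobi, or $c=d=0$, yielding Al-Salam-Chihara) by isolating the components of the expansion that are antisymmetric under the relevant parameter involution of the Askey-Wilson scheme.

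For the general band version \eqref{0.2Dq-general}, the plan is an induction on $\max(M,N)$: by repeatedly substituting the three-term recurrence into \eqref{0.2Dq-general}, one attempts to reorganize a width-$(M,N)$ relation into a narrower one at the cost of raising $\deg\phi$, using the tridiagonal analysis as base case. I expect the main obstacle to lie precisely here: outside the classical setting one must exclude every genuinely $\mathcal{D}_q$-semiclassical family that could support such a banded structure relation, and there is no \emph{a priori} reason to expect this exclusion to hold, in sharp contrast with the standard-derivative case where Koornwinder's polynomials already demonstrate that wider bands do appear beyond the classical family. Should this inductive step fail, Conjecture~\ref{conj1} itself must fail, and the abstract indicates that this is indeed the outcome the authors establish.
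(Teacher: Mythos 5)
There is a fundamental mismatch here: the statement you were given is a \emph{conjecture} that the paper \emph{disproves}, so a proof attempt cannot succeed, and the bulk of your proposal is aimed at the wrong target. The concrete technical failure in your sketch is the claim that a degree count in \eqref{0.2Dq}--\eqref{0.2Dq-general} together with the three-term recurrence ``forces $\deg\phi\leq 2$ and $\deg\psi=1$, so that $\mathbf{u}$ is $\mathcal{D}_q$-classical.'' This is exactly what fails for general $M,N$. In the paper's Theorem \ref{theo-2b}, a band relation whose lowest term is $P_{n-s}$ produces a Pearson-type equation whose class is $s-1-r$; it is zero only in the tridiagonal case $s=1$, and for $M\geq 2$ nothing forces classicality. (Note also that the correct Pearson equation in the Askey--Wilson setting is \eqref{NUL-Pearson}, $\mathbf{D}_q(\phi\mathbf{u})=\mathbf{S}_q(\psi\mathbf{u})$, with the averaging operator on the right; your reduction to $\mathcal{D}_q(\phi\mathbf{u})=\psi\mathbf{u}$ drops the $\mathbf{S}_q$, which is precisely what distinguishes this theory from the standard-derivative one.) Consequently your proposed induction on $\max(M,N)$, which tries to trade band width for $\deg\phi$, cannot close: widening the band genuinely enlarges the class of admissible forms beyond the Askey--Wilson family.

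You do correctly anticipate in your last paragraph that the inductive step should fail and that the conjecture is false, but a disproof requires an explicit counterexample, which you do not construct. The paper supplies one in Proposition \ref{Askey1}: the orthogonal polynomials with $B_n=0$ and $C_n=\tfrac14\bigl(1-(-1)^nq^{n/2}\bigr)\bigl(1-(-1)^nq^{(n-1)/2}\bigr)$ are $\mathcal{D}_q$-semiclassical of class two (hence not Askey--Wilson or any limiting case, by Proposition \ref{Prop}), yet satisfy $\texttt{U}_2\,\mathcal{D}_qP_n=a_nP_{n+1}+c_nP_{n-1}+d_nP_{n-3}$, a relation of type \eqref{0.2Dq-general} with $M=3$ and $N=1$. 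The verification there is a direct induction on $n$ using \eqref{TTRR} and the product rules of Lemma \ref{lema}, followed by an application of Theorem \ref{theo-2b} to pin down the class. If you want to salvage something from your approach, the tridiagonal part ($M=N=1$, with the nondegeneracy $a_{n,-1}\neq0$) is where your strategy is sound and where the prior results for $\deg\phi\leq 2$ cited in the introduction live; but the second assertion of the conjecture is simply false.
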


\begin{conj}\label{conj2}
Let $(P_n)_{n\geq 0}$ be a sequence of orthogonal polynomials and $\phi$ be a polynomial of degree at most $4$. Then $(P_n)_{n\geq 0}$ satisfies
\begin{align}\label{second-conject-equat}
\phi\,\mathcal{D}_q ^2 P_n=\sum_{j=-M} ^N a_{n,j}P_{n+j} \qquad (a_{n,j}\in \mathbb{C};\, M, N \in \mathbb{N}),
\end{align}
if and only if $P_n$ is a multiple of $p_n(x; a, b, c, d\,|\, q)$ for some for some parameters $a, b, c, d$.
\end{conj}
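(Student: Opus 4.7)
The plan is to translate the polynomial identity \eqref{second-conject-equat} into a distributional equation for the orthogonality functional $u$ of $(P_n)_{n\geq 0}$, and then recognize $u$ via an Askey--Wilson analogue of Maroni's classification theorem, which the abstract announces as one of the paper's main characterization results.

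For the \emph{if} direction, I would start from the fact that the Askey--Wilson polynomials $p_n(x;a,b,c,d\,|\,q)$ satisfy a second-order divided-difference equation of the form $\Phi\,\mathcal{D}_q^2 p_n+\Psi\,\mathcal{S}_q\mathcal{D}_q p_n=\lambda_n p_n$, with $\deg \Phi\leq 2$ and $\deg\Psi\leq 1$ ($\mathcal{S}_q$ being the averaging operator dual to $\mathcal{D}_q$). Multiplying by $\Phi$ one finds
$$
\Phi^2\,\mathcal{D}_q^2 p_n=\lambda_n\,\Phi\,p_n-\Phi\Psi\,\mathcal{S}_q\mathcal{D}_q p_n.
$$
Since $\mathcal{D}_q$ lowers degree by one, $\mathcal{S}_q$ preserves it, and multiplication by a polynomial of degree $k$ raises it by $k$, each term on the right is a polynomial of degree $n+2$, which by the three-term recurrence expands as a finite linear combination of $p_{n+j}$ with $|j|\leq j_0$. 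This yields \eqref{second-conject-equat} with $\phi=\Phi^2$ of degree at most $4$.

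For the \emph{only if} direction, the idea is to read \eqref{second-conject-equat} against $u$. Pairing both sides with $uP_m$ and transporting $\mathcal{D}_q^2$ onto the functional side via the $\mathcal{D}_q$-$\mathcal{S}_q$ integration-by-parts formulas, the orthogonality relations $\langle u,P_{n+j}P_m\rangle=h_m\,\delta_{n+j,m}$ collapse the right-hand side and leave a finite-support condition on $\langle \mathcal{D}_q^2(\phi u),P_k\rangle$ and related brackets. Arguing as in the standard-derivative case (Maroni's prolegomena referenced in \cite{M85,M87}), one should obtain a distributional equation
$$
\mathcal{D}_q^2(\phi\,u)+\mathcal{D}_q(\psi_1\,u)+\psi_0\,u=0,
$$
with $\deg\psi_1$ and $\deg\psi_0$ controlled by the bound $\deg\phi\leq 4$. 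By the characterization theorem announced in the abstract, $u$ is then $\mathcal{D}_q$-semiclassical of some class $s\geq 0$. It remains to prove $s=0$, which would identify $u$ as $\mathcal{D}_q$-classical and hence identify $(P_n)$ with an Askey--Wilson family.

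The principal obstacle is precisely this last step: forcing $s=0$. The analogy with \eqref{Askey} is unfavorable, because already for the standard derivative the relation \eqref{Askey} with $M,N\geq 2$ admits genuinely semiclassical (nonclassical) solutions, such as Koornwinder's Jacobi-type family. The degree bound $\deg\phi\leq 4$ alone provides no structural reason for the second-order Askey--Wilson relation to behave better than its differential counterpart, and the integration-by-parts computation above appears only to constrain the class of $u$, not to pin it to zero. Barring an unexpectedly tight degree-counting argument, one should therefore expect only $u$ to be $\mathcal{D}_q$-semiclassical of some bounded class, and explicit examples of $\mathcal{D}_q$-semiclassical families of class two fulfilling \eqref{second-conject-equat} would refute the conjecture as stated.
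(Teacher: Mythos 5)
Your diagnosis of the statement is the right one, and it coincides with the paper's: Conjecture \ref{conj2} is false, and the paper does not prove it but disproves it. Your final paragraph identifies precisely the obstruction (the structure relation only forces $\mathbf{u}$ to be $\mathcal{D}_q$-semiclassical of some bounded class, with no mechanism pinning the class to zero) and even predicts the form of the refutation, namely an explicit $\mathcal{D}_q$-semiclassical family of class two satisfying \eqref{second-conject-equat}. That is exactly what the paper supplies: Proposition \ref{Askey1} constructs a sequence with $B_n=0$ and $C_n=\frac14\big(1-(-1)^nq^{n/2}\big)\big(1-(-1)^nq^{(n-1)/2}\big)$, proves it is semiclassical of class two via the structure relations \eqref{true-eq-1-prime}--\eqref{true-eq-2} and Theorem \ref{theo-2b}, and Corollary \ref{corollary} then derives \eqref{eq-main-eq}, which is of the form \eqref{second-conject-equat} with $\deg\phi=4$, $M=6$, $N=2$ and nonzero extreme coefficient $d_{n,4}$. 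What your proposal lacks is this construction itself; the general machinery you sketch can only establish that the conjecture is \emph{not provable by that route}, not that it is false --- for that one must exhibit the example, and finding a regular form whose recurrence coefficients make both $\mathcal{S}_qP_n$ and $\texttt{U}_2\mathcal{D}_qP_n$ finite-band is the real content of Section \ref{Sec4}.

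Two smaller points on the machinery you propose for the ``only if'' direction. First, the distributional equation you write, $\mathbf{D}_q^2(\phi\mathbf{u})+\mathbf{D}_q(\psi_1\mathbf{u})+\psi_0\mathbf{u}=\mathbf{0}$, is not the form the paper's characterization takes: because the product rules \eqref{def-Dx-fg}--\eqref{def-fS_x-u} for the Askey--Wilson operator inevitably mix $\mathbf{D}_q$ and $\mathbf{S}_q$, the natural output of the duality computation is the pair $\mathbf{D}_q(\phi\mathbf{u})=\psi\mathbf{u}$, $\mathbf{S}_q(\phi\mathbf{u})=\rho\mathbf{u}$ of Theorem \ref{main-theorem}, or equivalently $\Phi\,\mathbf{D}_q\mathbf{u}=\Psi\,\mathbf{S}_q\mathbf{u}$ as in Theorem \ref{theo-2b}; a purely $\mathbf{D}_q^2$-type equation without $\mathbf{S}_q$ terms would not be reachable, and the class bookkeeping (Definition \ref{adm}, the admissibility condition $a_r\alpha_{n-1}+b_s\gamma_n\neq0$) is what actually controls $s$. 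Second, in the ``if'' direction the step ``each term on the right is a polynomial of degree $n+2$, which expands as a finite linear combination of $p_{n+j}$ with $|j|\leq j_0$'' needs the uniformity in $n$ of the band width, which does not follow from degree counting alone; it requires pairing against $\mathbf{u}P_m$ and using the Pearson equation \eqref{NUL-Pearson} for the Askey--Wilson form, i.e.\ the same duality argument as in the converse direction. Neither point affects your conclusion, which is the correct one.
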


 In \cite{A95}, Al-Salam proved Conjecture \ref{conj1} for $\phi=1$ by characterizing the continuous $q$-Hermite polynomials (see \cite[Section 14.26]{K}). In \cite{CMP22b}, we prove that the Al-Salam Chihara polynomials (see \cite[Section 14.26]{K}), with nonzero parameters $a$ and $b$ such that $a/b=q^{\pm 1/2}$, are the only OP satisfying \eqref{0.2Dq} for $\deg \phi=1$. We also prove that the Chebyschev polynomials of the first kind and the continuous $q$-Jacobi polynomials (see \cite[Section 14.10]{K}) are the only ones satisfying \eqref{0.2Dq} for $\deg \phi=2$.  Moreover, in \cite[Proposition 2.1]{CM22} we prove that the continuous dual $q$-Hahn polynomials (see \cite[Section 14.3]{K}), with parameters $a=1$, $b=-1$, $c=q^{1/4}$, and $q$ replaced by $q^{1/2}$, satisfy \eqref{0.2Dq-general} with $M=2$ and $N=1$, which  disproves the second part of  Conjecture \ref{conj1}.  On the other hand, Conjecture \ref{conj2} is claimed to be positively solved in \cite{KJconj}, but the authors only proved, partially, the case $M=2$ and $N=2$ \footnote{It is worth pointing out that the main result of \cite{KJconj} is a particular case of a direct consequence of Sonine-Hahn's theorem for the Askey-Wilson operator (see remark after \cite[Theorem 1.2]{CM23}).}. Recently we noted that above conjectures are related with the theory of $\mathcal{D}_q$-semiclassical orthogonal polynomials. Indeed, in Section \ref{Sec3}, we  characterize $\mathcal{D}_q$-semiclassical (and classical) orthogonal polynomials from the structure relation \eqref{0.2Dq-general},  in consonance with the  works by Maroni \cite{M87} and Bonan, Lubinsky, and Nevai \cite{BLN87} for the standard derivative,  re-establishing in this context  the perfect ``symmetry" between the standard derivative and the Askey-Wilson operator. As an application of these results, in Section \ref{Sec4}, we present an example of $\mathcal{D}_q$-semiclassical orthogonal polynomials that disproves Conjecture \ref{conj1}. In Section \ref{Sec4} we also show that the OP that disproves Conjecture \ref{conj1} also disproves Conjecture \ref{conj2}. Finally,  in Section \ref{Sec6}, we explore our ideas when instead of the Askey-Wilson operator we consider the Hahn operator, which is related with the structure relation of another conjecture posed by Ismail (see \cite[Conjecture 24.7.7]{I05}), but first some preliminary definitions and basic results are needed.

\section{Preliminary results}\label{Sec2} 
Let $\mathcal{P}^*$ be the set of all linear forms on $\mathcal{P}$ and let $\mathcal{P}_n$ be the subspace of $\mathcal{P}$ of all polynomials with degree less than or equal to $n$. Set $\mathcal{P}_{-1}=\{0\}$. A free system in $\mathcal{P}$ is a sequence $(Q_n)_{n\geq0}$ such that
$Q_n\in\mathcal{P}_n\setminus\mathcal{P}_{n-1}$ for each $n$.
A free system $(P_n)_{n\geq0}$ is called OP
with respect to ${\bf u}\in\mathcal{P}^*$ if 
$$
\langle{\bf u},P_nP_m\rangle=h_n\delta_{n,m}\quad(m=0,1,\ldots; \,h_n\in\mathbb{C}\setminus\{0\}),
$$
$\langle{\bf u},f\rangle$ being the action of ${\bf u}$ on $f$. ${\bf u}$ is called regular if there exists an OP with respect to it. Recall that a (monic) OP, $(P_n)_{n\geq 0}$, satisfies the following recurrence relation:
\begin{align}
x P_n(x)=P_{n+1}(x)+B_nP_n(x)+C_nP_{n-1}(x)\qquad  (B_n\in \mathbb{C},\, C_{n+1} \in \mathbb{C}\setminus \{0\}), \label{TTRR}
\end{align}
with initial conditions $P_{-1}=0$ and $P_{0}=1$. Hence it follows that
$$
B_n=\frac{\langle \mathbf{u}, x P^2_n(x)\rangle}{\langle \mathbf{u}, P^2_{n}(x)\rangle}, \qquad C_n=\frac{\langle \mathbf{u}, P^2_n\rangle}{\langle \mathbf{u}, P^2_{n-1}\rangle}.
$$
(Of course, there is no loss of generality in assuming $C_0=0$.) Since the elements of $\mathcal{P}^*$ are completely determined by its action on a system of generators of  $\mathcal{P}$, we say that $\mathbf{u}=\mathbf{v}$ if and only if 
$$
\mathbf{u}_n=\prodint{\mathbf{u}, x^n}=\prodint{\mathbf{v}, x^n}
$$ for all $n\in \mathbb{N}$. In the set $\mathcal{P}^*$, addition and multiplications by scalars can be defined by
\begin{align*}
\prodint{\mathbf{u}+\mathbf{v}, x^n}&=\prodint{\mathbf{u}, x^n}+\prodint{\mathbf{v}, x^n},\\[7pt]
\prodint{c \mathbf{u}, x^n}&=c \prodint{\mathbf{u}, x^n}\qquad (c\in \mathbb{C}),
\end{align*}
for all $n\in \mathbb{N}$. $\mathcal{P}^*$, endowed with these operations, is a vector space over $\mathcal{P}$.  In $\mathcal{P}^*$, the identity for the additivity is denoted by $\mathbf{0}$ and called the zero. The zero is therefore defined by the relation $\prodint{\mathbf{0}, x^n}=0$ for all $n \in \mathbb{N}$. Note that $f\,\mathbf{u}=g\,\mathbf{u}=\mathbf{0}$ $(f,g \in \mathcal{P})$ if and only if $\mathbf{u}=\mathbf{0}$.
The left multiplication of ${\bf u}$ by $f\in \mathcal{P}$, denoted by $f{\bf u}:\mathcal{P}\to\mathcal{P}$,
is the form defined by
$$
\langle f {\bf u}, x^n\rangle=\langle {\bf u}, f x^n\rangle,
$$
for all $n\in \mathbb{N}$. The division of $\mathbf{u}$ by a polynomial, denoted by $(x-c)^{-1}\mathbf{u}:\mathcal{P}\to\mathcal{P}$, is the form defined by
$$
\prodint{(x-c)^{-1}\mathbf{u}, f}=\prodint{\mathbf{u}, \frac{f(x)-f(c)}{x-c}}\qquad (c\in \mathbb{C};\, f\in \mathcal{P}).
$$
Define also  $\mathbf{\delta}_c: \mathcal{P} \to \mathbb{C}$ by $\mathbf{\delta}_c f(x)=f(c)$. We check at once that
$$
(x-c)((x-c)^{-1}\mathbf{u})=\mathbf{u}, \qquad (x-c)^{-1}((x-c)\mathbf{u})=\mathbf{u}-\mathbf{u}_0\,\delta_c.
$$

 $\mathcal{P}$ may be endowed with an appropriate strict inductive limit topology such that the algebraic and
the topological dual spaces of $\mathcal{P}$ coincide (see \cite[Chapter 13]{T}), that is, 
\begin{align}\label{pascal}
\mathcal{P}^*=\mathcal{P}'.
\end{align}
Given a free system $(Q_n)_{n\geq0}$, the corresponding dual basis is a sequence of linear forms
${\bf a}_n:\mathcal{P}\to\mathbb{C}$ such that
$$
\langle{\bf a}_n,Q_m\rangle=\delta_{n,m},
$$
and so, for an OP $(P_n)_{n\geq0}$, ${\bf a}_n$ is explicitly given by
$$
{\bf a}_n=\frac{P_n}{\langle{\bf a}_n,P^2_n\rangle} \mathbf{u}.
$$
In the sense of the weak dual topology, it would be easy to explicitly build bases in dual space. Indeed,
$$
\mathbf{u}=\sum_{j=0}^\infty \prodint{\mathbf{u}, Q_j} \mathbf{a}_j\qquad (\mathbf{u} \in \mathcal{P}');
$$
this will be essential in the sequel. For more details we refer the reader to \cite{M91} (see also \cite{CP}). 

The Askey-Wilson  average operator $\mathcal{S}_q:\mathcal{P}\to\mathcal{P}$ is defined by  (see \cite[p. 301]{I05}) 
\begin{align*}
\mathcal{S}_q f(x(s))=\frac{f\big(x(s+1/2)\big)+f\big(x(s-1/2)\big)}{2}.
\end{align*}
 for every polynomial $f$. It is easy to see that $\mathcal{D}_q\,x^n =\gamma_n x^{n-1}+ (\text{lower degree terms})$ and $\mathcal{S}_q\,x^n=\alpha_n x^n+ (\text{lower degree terms})$ for all $n\in \mathbb{N}$, where we have set
\begin{align}\label{ag}
\alpha_n&= \frac{q^{n/2} +q^{-n/2}}{2},\qquad  \gamma_n = \frac{q^{n/2}-q^{-n/2}}{q^{1/2}-q^{-1/2}}.
\end{align}
Set $\gamma_{-1}=-1$ and $\alpha_{-1}=\alpha$.  For every ${\bf u} \in \mathcal{P^*}$ and $f\in \mathcal{P}$, $\mathbf{D}_q:\mathcal{P}^*\to\mathcal{P}^*$ and $\mathbf{S}_q:\mathcal{P}^*\to\mathcal{P}^*$ are defined by transposition:
\begin{align*}
\langle \mathbf{D}_q{\bf u},f\rangle=-\langle {\bf u},\mathcal{D}_q f\rangle,\qquad  \langle\mathbf{S}_q{\bf u},f\rangle=\langle {\bf u},\mathcal{S}_q f\rangle.
\end{align*}  
The next definition extends the definition of classical linear forms given by Geronimus \cite{G40} and Maroni (see \cite[Proposition 2.1]{M93}).

\begin{definition}\label{NUL-def}  \cite[Definition 3.1]{CMP22a}
${\bf u}\in\mathcal{P}^*$ is called $\mathbf{D}_q$-classical if it is regular and there exist 
$\phi\in\mathcal{P}_2\setminus \mathcal{P}_{-1}$ and $\psi\in\mathcal{P}_1\setminus \mathcal{P}_{-1}$
such that
\begin{align}\label{NUL-Pearson}
\mathbf{D}_q (\phi{\bf u})=\mathbf{S}_q(\psi{\bf u}).
\end{align}
$($We will call it simply classical when no confusion can arise.$)$
\end{definition}

Observe that \eqref{NUL-Pearson} condenses all the information of a sequence of classical orthogonal polynomials in the first three non-constant polynomials of said sequence. The next theorem gives tractable necessary and sufficient conditions for the existence of solutions of \eqref{NUL-Pearson}, characterizing the linear form $\mathbf{u}$ and, in particular, solving the question of the existence of classical OP.

\begin{theorem}\label{regAW}\cite[Theorem 4.1]{CMP22a}
Suppose that ${\bf u} \in \mathcal{P}^*$ satisfies \eqref{NUL-Pearson} with $\phi(x)=ax^2+bx+c$ and $\psi(x)=dx+e$. Then ${\bf u}$ is regular if and only if 
\begin{align*}
 d_n\neq 0,\qquad \phi ^{[n]} \left(-\frac{e_n}{d_{2n}}\right)\neq 0,
\end{align*}
for all $n\in \mathbb{N}$, where $d_n=a\gamma_n+d\alpha_n$, $e_n=b\gamma_n+e\alpha_n$, $\alpha_n$ and $\gamma_n$ being defined by \eqref{ag}, and
 \begin{align*}
\phi^{[n]}(x)=\big(d(\alpha^2-1)\gamma_{2n}+a\alpha_{2n}\big)
\big(x^2-1/2\big)+\big(b\alpha_n+e(\alpha^2-1)\gamma_n\big)x+ c+a/2.
\end{align*}
\end{theorem}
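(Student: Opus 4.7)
The plan is to translate the Pearson equation \eqref{NUL-Pearson} into an explicit recurrence for the moments $\mathbf{u}_n=\langle\mathbf{u},x^n\rangle$, and then to read off from this recurrence the coefficients of the candidate monic three-term recurrence associated with $\mathbf{u}$, so that regularity is reduced to the non-vanishing of these coefficients; the two displayed conditions will drop out of that reduction.

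First I would pair both sides of \eqref{NUL-Pearson} with $x^n$. Using
\begin{equation*}
\langle\mathbf{D}_q(\phi\mathbf{u}),x^n\rangle=-\langle\mathbf{u},\phi\,\mathcal{D}_q x^n\rangle,\qquad \langle\mathbf{S}_q(\psi\mathbf{u}),x^n\rangle=\langle\mathbf{u},\psi\,\mathcal{S}_q x^n\rangle,
\end{equation*}
together with the identities $\mathcal{D}_q x^n=\gamma_n x^{n-1}+\cdots$ and $\mathcal{S}_q x^n=\alpha_n x^n+\cdots$, the coefficient of $\mathbf{u}_{n+1}$ in the resulting relation is exactly $a\gamma_n+d\alpha_n=d_n$. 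Thus the sequence $(\mathbf{u}_n)_{n\geq 0}$ is uniquely and finitely determined from $\mathbf{u}_0$ if and only if $d_n\neq 0$ for every $n\in\mathbb{N}$; this gives both the necessity of the first condition and, under it, closed formulas for all moments.

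Second, assuming $d_n\neq 0$ for all $n$, I would construct the candidate monic polynomials $(P_n)_{n\geq 0}$ for $\mathbf{u}$ via a Rodrigues-type formula in the spirit of \cite[Section 4]{CMP22a}, and compute the coefficients $B_n$ and $C_{n+1}$ of the associated three-term recurrence \eqref{TTRR} in closed form using \eqref{NUL-Pearson}. A calculation paralleling Maroni's in the standard-derivative setting yields $B_n=-e_n/d_{2n}$ and shows that $C_{n+1}$ coincides, up to a nonvanishing factor that depends only on products of the $d_k$'s, with $\phi^{[n]}(-e_n/d_{2n})$. Since $\mathbf{u}$ is regular if and only if $C_{n+1}\neq 0$ for every $n\geq 0$, the second condition drops out and both sufficiency and necessity are established.

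The technical heart, and the main obstacle, is the closed-form evaluation of $C_{n+1}$. The natural route is to iterate the Pearson equation via the product rules
\begin{equation*}
\mathcal{D}_q(fg)=(\mathcal{S}_q f)(\mathcal{D}_q g)+(\mathcal{D}_q f)(\mathcal{S}_q g),\qquad \mathcal{S}_q(fg)=(\mathcal{S}_q f)(\mathcal{S}_q g)+U(x)(\mathcal{D}_q f)(\mathcal{D}_q g),
\end{equation*}
for an appropriate fixed polynomial $U$, to show that $\phi^{n}\mathbf{u}$ obeys a Pearson-type equation whose quadratic part is precisely $\phi^{[n]}$; then $C_{n+1}$ is identified with the value of this shifted quadratic at the shifted node $B_n=-e_n/d_{2n}$. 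Forcing the bookkeeping to collapse into the exact expression displayed in the theorem, with coefficients involving $\alpha_{2n},\gamma_{2n}$ and the factor $\alpha^2-1$, is where the computation is most delicate and will absorb the bulk of the effort.
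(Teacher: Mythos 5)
First, a caveat: the paper does not prove Theorem \ref{regAW} at all --- it is imported verbatim from \cite[Theorem 4.1]{CMP22a} --- so there is no in-text proof to compare against; I can only judge your proposal on its merits and against the analogous result the paper does record (Theorem \ref{Dqw-main-Thm} for Hahn's operator). Your overall architecture (moment recurrence, then closed forms for $B_n$ and $C_{n+1}$ so that regularity reduces to $C_{n+1}\neq 0$) is the right one, but there are genuine gaps. The most serious is the necessity of $d_n\neq 0$. Pairing \eqref{NUL-Pearson} with $x^n$ does give $d_n\mathbf{u}_{n+1}=(\text{lower moments})$, and $d_n\neq 0$ for all $n$ is exactly what makes the solution of the functional equation unique up to normalization. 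But ``the moments are not uniquely determined when some $d_n=0$'' does not imply ``no regular solution exists when some $d_n=0$'': if $d_{n_0}=0$ the equation merely imposes a linear constraint on $\mathbf{u}_0,\dots,\mathbf{u}_{n_0}$, and the (possibly multi-parameter) solution set could a priori contain a regular form. Ruling this out is a substantive step (the admissibility issue in Maroni's theory, cf.\ Definition \ref{adm} and the remark following it), and your argument does not address it.

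The second gap is the ``technical heart'' you flag but do not carry out, and the route you sketch for it is not the correct one in the Askey--Wilson calculus. The form $\phi^{n}\mathbf{u}$ does not satisfy a Pearson-type equation with quadratic part $\phi^{[n]}$; because $\mathbf{D}_q$ and $\mathbf{S}_q$ interact with polynomial multiplication through the coupled rules \eqref{def-Dx-fu}--\eqref{def-fS_x-u} (note the $\texttt{U}_1$, $\texttt{U}_2$ corrections, which are where the $\alpha^2-1$ factors in $\phi^{[n]}$ come from), the objects that satisfy iterated Pearson equations with pairs $(\phi^{[n]},\psi^{[n]})$ are the derivative forms attached to the sequences $(\mathcal{D}_q^{\,n}P_m)_m$ via their dual bases, not powers of $\phi$ times $\mathbf{u}$. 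Relatedly, your identification $B_n=-e_n/d_{2n}$ is not right: $-e_n/d_{2n}$ is the zero of the degree-one polynomial $\psi^{[n]}$ at which $\phi^{[n]}$ is evaluated, whereas $B_n$ is a difference of two such quotients --- compare the explicit formula $B_n=\omega[n]_q+[n]_qe_{n-1}/d_{2n-2}-[n+1]_qe_n/d_{2n}$ in Theorem \ref{Dqw-main-Thm}, the Hahn-operator analogue. Without the correct iterated Pearson structure the closed form $C_{n+1}=(\text{nonzero factor})\cdot\phi^{[n]}(-e_n/d_{2n})$, on which both directions of the equivalence rest, is only asserted, not proved.
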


OP with respect to ($\mathbf{D}_q$-)classical linear forms are called ($\mathcal{D}_q$-)classical polynomials.  Unlike when dealing with the standard derivative, in the case of Definition \ref{NUL-def} it is still an open problem to describe the solutions of \eqref{NUL-Pearson} (see \cite[Theorem 3.2]{CP}).
\begin{theorem}\label{T}\cite[Theorem 20.1.3]{I05}
The equation
\begin{align}\label{Ismail}
f(x)\mathcal{D}^2_q\, y+g(x) \mathcal{S}_q \mathcal{D}_q \, y+h(x)\, y=\lambda_n\, y
\end{align}
has a polynomial solution $P_n\in \mathcal{P}_n\setminus\mathcal{P}_{n-1}$ if and only if $P_n$ is a multiple of $p_n(x; a, b, c, d\,|\, q)$ for some parameters $a, b, c, d$ including limiting cases as one or more of the parameters tends to $\infty$. In all these cases $f$, $g$, $h$, and $\lambda_n$ reduce to
\begin{align*}
f(x)&=-q^{-1/2}(2(1+\sigma_4)x^2-(\sigma_1+\sigma_3)x-1+\sigma_2-\sigma_4),\\[7pt]
g(x)&=\frac{2}{1-q} (2(\sigma_4-1)x+\sigma_1-\sigma_3), \qquad h(x)=0,\\[7pt]
\lambda_n&=\frac{4 q(1-q^{-n})(1-\sigma_4 q^{n-1})}{(1-q)^2}, 
\end{align*}
or a special or limiting case of it, $\sigma_j$ being the jth elementary symmetric function of the Askey-Wilson parameters. 
\end{theorem}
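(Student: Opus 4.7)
The plan is to reduce the problem to a Pearson-type functional equation for the orthogonality form of $(P_n)_{n\geq 0}$ and then invoke the classification already available via Theorem \ref{regAW}. The first step is degree analysis: applied to $x^n$, the operator $L := f\mathcal{D}_q^2 + g\,\mathcal{S}_q\mathcal{D}_q + h$ produces terms of degree $\deg f + n - 2$, $\deg g + n - 1$, and $\deg h + n$ respectively, so in order for $L$ to stabilise $\mathcal{P}_n$ for every $n$ we must have $\deg f \leq 2$, $\deg g \leq 1$, and $h$ constant. After absorbing $h$ into $\lambda_n$ we may assume $h \equiv 0$, and comparing leading coefficients (using $\mathcal{D}_q x^n = \gamma_n x^{n-1} + \cdots$ and $\mathcal{S}_q x^n = \alpha_n x^n + \cdots$ from \eqref{ag}) gives the announced formula for $\lambda_n$; provided the leading coefficient of $f$ is nonzero, the eigenvalues are pairwise distinct.

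With distinct eigenvalues, the polynomial eigenfunctions $P_n$ must be orthogonal with respect to any $\mathbf{u} \in \mathcal{P}^*$ that makes $L$ formally symmetric, i.e., $\langle \mathbf{u}, p\,L q \rangle = \langle \mathbf{u}, q\,L p \rangle$ for all $p, q \in \mathcal{P}$; indeed, $(\lambda_m - \lambda_n)\langle\mathbf{u}, P_n P_m\rangle = 0$ follows immediately. Using the transposition rules
$$
\langle \mathbf{u}, \mathcal{D}_q p \rangle = -\langle \mathbf{D}_q \mathbf{u}, p \rangle, \qquad \langle \mathbf{u}, \mathcal{S}_q p \rangle = \langle \mathbf{S}_q \mathbf{u}, p \rangle,
$$
together with the Askey--Wilson product rules for $\mathcal{D}_q$ and $\mathcal{S}_q$, the symmetry requirement collapses to a single condition on $\mathbf{u}$, which rearranges into a Pearson-type equation
$$
\mathbf{D}_q(\phi\,\mathbf{u}) = \mathbf{S}_q(\psi\,\mathbf{u}),
$$
where $\phi, \psi$ are explicit combinations of $f, g$ with $\deg \phi \leq 2$ and $\deg \psi \leq 1$. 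This places $\mathbf{u}$ squarely in the framework of Definition \ref{NUL-def}.

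At this point Theorem \ref{regAW} takes over: its regularity conditions cut out the four-parameter family of admissible $(\phi, \psi)$ which, upon direct identification, matches the orthogonality form of the Askey--Wilson polynomials $p_n(x; a, b, c, d\mid q)$, with degenerate cases corresponding to one or more of $a, b, c, d$ tending to infinity. The reverse implication is the classical computation of Askey and Wilson: working from the ${}_4\phi_3$ representation of $p_n$ together with the contiguous relations for basic hypergeometric series, the combination $f\,\mathcal{D}_q^2 p_n + g\,\mathcal{S}_q\mathcal{D}_q p_n$ collapses to $\lambda_n p_n$.

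The hard part is not the abstract reduction but the bookkeeping of the third step, namely recognising that the pair $(\phi, \psi)$ produced by the symmetrisation procedure, once rewritten in the elementary symmetric functions $\sigma_1, \ldots, \sigma_4$ of the Askey--Wilson parameters, yields exactly the closed forms recorded for $f$ and $g$. A secondary delicate point is the uniform treatment of the limiting cases --- both at the level of Theorem \ref{regAW}, where one or several parameters degenerate, and at the level of $\lambda_n$, where $\sigma_4 = 0$ must be admitted --- but these are routine limits of the $q$-hypergeometric identities and introduce no new ideas.
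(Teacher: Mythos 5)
There is an important mismatch of expectations here: the paper does not prove this statement at all --- it is imported verbatim from Ismail's monograph \cite[Theorem 20.1.3]{I05} as a preliminary result --- so your sketch has to stand on its own, and at its decisive step it does not. Your plan is: degree analysis gives $\deg f\leq 2$, $\deg g\leq 1$, $h$ constant; a symmetrizing form $\mathbf{u}$ for $L=f\mathcal{D}_q^2+g\,\mathcal{S}_q\mathcal{D}_q$ satisfies the Pearson equation \eqref{NUL-Pearson}; and then ``Theorem \ref{regAW} takes over'' and identifies the admissible pairs $(\phi,\psi)$ with the Askey--Wilson orthogonality forms. That last identification is essentially the content of the theorem you are trying to prove, and Theorem \ref{regAW} does not supply it: it only decides, for a given pair $(\phi,\psi)$, whether the solution $\mathbf{u}$ of \eqref{NUL-Pearson} is regular; it says nothing about which OPS the regular solutions carry. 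In this paper the logic runs the other way round: Proposition \ref{Prop} obtains the classification of $\mathcal{D}_q$-classical orthogonal polynomials \emph{from} Theorem \ref{T}, via the equivalence of \eqref{NUL-Pearson} and \eqref{Ismail} with $h=0$ proved in \cite{FKN11}, and the paper explicitly notes that describing the solutions of \eqref{NUL-Pearson} is still an open problem (see \cite{CP}). So, with the tools available here, your argument is circular exactly where the real work lies.

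There are also concrete local gaps. Your claim that a nonzero leading coefficient of $f$ forces pairwise distinct eigenvalues is false: with $\lambda_n=\gamma_n\big(a\gamma_{n-1}+d\alpha_{n-1}\big)$ one can have $\lambda_n=\lambda_m$ for $n\neq m$ even when $a\neq 0$ (in the Askey--Wilson normalization of the statement this happens precisely when $\sigma_4=q^{1-n-m}$), so distinctness requires an admissibility-type condition on the pair of leading coefficients, not just $a\neq0$. Moreover, the existence of a nonzero symmetrizing $\mathbf{u}$ and, crucially, its regularity are assumed rather than proved: formal symmetry only yields $\langle\mathbf{u},P_nP_m\rangle=0$ when $\lambda_n\neq\lambda_m$, and without regularity you cannot conclude that the eigenfunctions are the OPS of $\mathbf{u}$; the degenerate and limiting cases that the statement explicitly covers are exactly the ones where regularity (or eigenvalue distinctness) fails, and your sketch is silent about them. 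Finally, the opening degree analysis should rule out cancellation among the top-degree contributions of the three pieces (whose leading coefficients involve $\gamma_n\gamma_{n-1}$, $\gamma_n\alpha_{n-1}$ and $1$) before asserting $\deg f\leq2$, $\deg g\leq1$, $\deg h\leq0$; this is routine, but it is not automatic and should be argued.
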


Let us recall some useful operations.

\begin{lemma}\label{lema}\cite[Lemma 2.1]{CMP22a}
Let $f,g\in\mathcal{P}$ and ${\bf u}\in\mathcal{P}^*$. Then the following hold:
\begin{align}
\mathcal{D}_q \big(fg\big)&= \big(\mathcal{D}_q f\big)\big(\mathcal{S}_q g\big)+\big(\mathcal{S}_q f\big)\big(\mathcal{D}_q g\big), \label{def-Dx-fg} \\[7pt]
\mathcal{S}_q ( fg)&=(\mathcal{D}_q f) (\mathcal{D}_q g)\texttt{U}_2  +(\mathcal{S}_q f\big) \big(\mathcal{S}_q g), \label{def-Sx-fg} \\[7pt]
f\mathcal{D}_q g&=\mathcal{D}_q\left((\mathcal{S}_qf-\alpha^{-1}\texttt{U}_1\mathcal{D}_qf )g\right)-\alpha^{-1}\mathcal{S}_q (g\mathcal{D}_q f), \label{def-fDxg} \\[7pt]
\alpha{\bf D}_q(f{\bf u})&=\left(\alpha\mathcal{S}_qf-\texttt{U}_1\mathcal{D}_qf \right){\bf D}_q{\bf u}+\mathcal{D}_qf{\bf S}_q{\bf u},\label{def-Dx-fu}   \\[7pt]
\alpha {\bf S}_q (f{\bf u}) &=(\alpha ^2 \texttt{U}_2 -\texttt{U}_1 ^2)\mathcal{D}_qf~{\bf D}_q  {\bf u}+(\alpha \mathcal{S}_qf+\texttt{U}_1\mathcal{D}_qf ){\bf S}_q {\bf u},\label{def-Sx-fu}\\[7pt]
f{\bf D}_q {\bf u}&={\bf D}_q\left(\mathcal{S}_qf~{\bf u}  \right)-{\bf S}_q\left(\mathcal{D}_qf~{\bf u}  \right), \label{def-fD_x-u}\\[7pt]
f{\bf S}_q {\bf u}&={\bf S}_q\left(\mathcal{S}_qf~{\bf u}  \right)-{\bf D}_q\left(\texttt{U}_2 \mathcal{D}_qf~{\bf u}  \right), \label{def-fS_x-u}\\[7pt]     
\alpha \mathbf{D}_q ^n \mathbf{S}_q {\bf u}&= \alpha_{n+1} \mathbf{S}_q \mathbf{D}_q^n {\bf u}
+\gamma_n \texttt{U}_1\mathbf{D}_q^{n+1}{\bf u} \qquad (n \in \mathbb{N}),\label{DxnSx-u} 
\end{align}
where $\texttt{U}_1 (x)=(\alpha ^2 -1)x$ and $\texttt{U}_2 (x)=(\alpha ^2 -1)(x^2-1).$
\end{lemma}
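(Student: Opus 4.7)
The plan is to establish the eight identities in the order given, each one being obtained either by direct evaluation on the $q$-parametrization or by transposition from a previously established polynomial one via the defining relations $\langle\mathbf{D}_q\mathbf{u},f\rangle=-\langle\mathbf{u},\mathcal{D}_qf\rangle$ and $\langle\mathbf{S}_q\mathbf{u},f\rangle=\langle\mathbf{u},\mathcal{S}_qf\rangle$.

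For \eqref{def-Dx-fg} and \eqref{def-Sx-fg} I would work on the parametrization $x=x(s)=(q^s+q^{-s})/2$ with shifts $x_\pm:=x(s\pm 1/2)$. A short calculation yields
\begin{align*}
x_+-x_-=\tfrac12(q^{1/2}-q^{-1/2})(q^s-q^{-s}),\qquad (x_+-x_-)^2=(q^{1/2}-q^{-1/2})^2(x^2-1)=4\,\texttt{U}_2(x),
\end{align*}
using $4(\alpha^2-1)=(q^{1/2}-q^{-1/2})^2$ (which comes from $\alpha=\alpha_1$). The product rules \eqref{def-Dx-fg} and \eqref{def-Sx-fg} then drop out of the polarization identities for $(fg)(x_+)\mp(fg)(x_-)$, after dividing by $x_+-x_-$ and by $2$ respectively. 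Identity \eqref{def-fDxg} is then an algebraic rearrangement: writing $f=(\mathcal{S}_qf-\alpha^{-1}\texttt{U}_1\mathcal{D}_qf)+\alpha^{-1}\texttt{U}_1\mathcal{D}_qf$, applying \eqref{def-Dx-fg} to the pair $(\mathcal{S}_qf-\alpha^{-1}\texttt{U}_1\mathcal{D}_qf,\,g)$ and using the elementary companion identities $\mathcal{D}_q\texttt{U}_1=\alpha^2-1$ and $\mathcal{S}_q\texttt{U}_1=\alpha\texttt{U}_1$ on the correction term, the pieces rearrange (after a short calculation with the product rules) into the claimed form. A quick sanity check on the cases $f=1$ and $f=x$ confirms this: with $f=x$, the right-hand side equals $\alpha^{-1}\mathcal{D}_q(xg)-\alpha^{-1}\mathcal{S}_qg=\alpha^{-1}(\mathcal{S}_qg+\alpha x\mathcal{D}_qg)-\alpha^{-1}\mathcal{S}_qg=x\mathcal{D}_qg$.

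The dual identities follow by transposition. For \eqref{def-Dx-fu}, apply \eqref{def-fDxg} inside the pairing:
\begin{align*}
\langle\alpha\mathbf{D}_q(f\mathbf{u}),g\rangle=-\alpha\langle\mathbf{u},f\mathcal{D}_qg\rangle=-\langle\mathbf{u},\alpha\mathcal{D}_q\bigl((\mathcal{S}_qf-\alpha^{-1}\texttt{U}_1\mathcal{D}_qf)g\bigr)-\mathcal{S}_q(g\mathcal{D}_qf)\rangle,
\end{align*}
and transfer $\mathcal{D}_q$ and $\mathcal{S}_q$ across to the linear form via their definitions, obtaining $\langle(\alpha\mathcal{S}_qf-\texttt{U}_1\mathcal{D}_qf)\mathbf{D}_q\mathbf{u}+\mathcal{D}_qf\,\mathbf{S}_q\mathbf{u},\,g\rangle$, i.e.\ \eqref{def-Dx-fu}. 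The same mechanism, starting from the analogue of \eqref{def-fDxg} for $f\mathcal{S}_qg$ extracted from \eqref{def-Sx-fg}, yields \eqref{def-Sx-fu}; \eqref{def-fD_x-u} and \eqref{def-fS_x-u} come directly from transposing $\langle f\mathbf{D}_q\mathbf{u},g\rangle=-\langle\mathbf{u},\mathcal{D}_q(fg)\rangle$ and $\langle f\mathbf{S}_q\mathbf{u},g\rangle=\langle\mathbf{u},\mathcal{S}_q(fg)\rangle$ and then invoking \eqref{def-Dx-fg} and \eqref{def-Sx-fg}.

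Finally, \eqref{DxnSx-u} is proved by induction on $n$. The base case $n=0$ is trivial since $\gamma_0=0$ and $\alpha=\alpha_1$. For the inductive step, apply $\mathbf{D}_q$ to both sides of the $n$-th identity, and use \eqref{def-fD_x-u} with $f=\texttt{U}_1$, together with $\mathcal{D}_q\texttt{U}_1=\alpha^2-1$ and $\mathcal{S}_q\texttt{U}_1=\alpha\texttt{U}_1$, to commute $\mathbf{D}_q$ past the factor $\texttt{U}_1$; collecting terms by means of the two-term recurrences $\alpha_{n+2}=2\alpha\alpha_{n+1}-\alpha_n$ and $\gamma_{n+2}=2\alpha\gamma_{n+1}-\gamma_n$ (both built into \eqref{ag}) produces the $(n+1)$-st identity. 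The main obstacle I anticipate is purely the bookkeeping of $\texttt{U}_1,\texttt{U}_2$ and of the scalars $\alpha,\alpha_n,\gamma_n$ through each transposition and through the induction; conceptually, everything reduces to the initial pointwise polarization computation on the parametrization.
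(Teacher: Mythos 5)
The paper does not prove this lemma at all --- it is imported verbatim from \cite[Lemma 2.1]{CMP22a} --- so your proposal can only be judged on its own terms. The overall architecture (pointwise polarization on the lattice $x_\pm=x(s\pm1/2)$ for the two product rules, then transposition through $\langle\mathbf{D}_q\mathbf{u},f\rangle=-\langle\mathbf{u},\mathcal{D}_qf\rangle$ and $\langle\mathbf{S}_q\mathbf{u},f\rangle=\langle\mathbf{u},\mathcal{S}_qf\rangle$ for the dual identities) is the standard and correct route, and your computation $(x_+-x_-)^2=4\texttt{U}_2(x)$ is right. Two steps, however, have genuine gaps. First, the decomposition you announce for \eqref{def-fDxg}, namely $f=(\mathcal{S}_qf-\alpha^{-1}\texttt{U}_1\mathcal{D}_qf)+\alpha^{-1}\texttt{U}_1\mathcal{D}_qf$, is false as written: the right-hand side equals $\mathcal{S}_qf$, not $f$. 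The identity itself is correct, but the honest proof is to expand $\mathcal{D}_q\big((\mathcal{S}_qf-\alpha^{-1}\texttt{U}_1\mathcal{D}_qf)g\big)$ and $\mathcal{S}_q(g\mathcal{D}_qf)$ with \eqref{def-Dx-fg}--\eqref{def-Sx-fg} and match coefficients of $\mathcal{S}_qg$ and $\mathcal{D}_qg$; this reduces \eqref{def-fDxg} to the two $f$-only commutation relations $\mathcal{D}_q\mathcal{S}_qf=\alpha\mathcal{S}_q\mathcal{D}_qf+\texttt{U}_1\mathcal{D}_q^2f$ and $\mathcal{S}_q^2f=f+\texttt{U}_1\mathcal{S}_q\mathcal{D}_qf+\alpha\texttt{U}_2\mathcal{D}_q^2f$, which must be proved (e.g.\ on the parametrization or on monomials) and which your list of ``companion identities'' ($\mathcal{D}_q\texttt{U}_1=\alpha^2-1$, $\mathcal{S}_q\texttt{U}_1=\alpha\texttt{U}_1$) does not contain. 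Checking $f=1$ and $f=x$ is not a substitute, since linearity in $f$ only reduces the claim to all monomials $x^n$.

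Second, your induction for \eqref{DxnSx-u} does not close as described. Applying $\mathbf{D}_q$ to the $n$-th identity produces the term $\alpha_{n+1}\mathbf{D}_q\mathbf{S}_q\mathbf{D}_q^n\mathbf{u}$, and to rewrite $\mathbf{D}_q\mathbf{S}_q\mathbf{v}$ in terms of $\mathbf{S}_q\mathbf{D}_q\mathbf{v}$ and $\texttt{U}_1\mathbf{D}_q^2\mathbf{v}$ you need precisely the $n=1$ instance of \eqref{DxnSx-u}; the base case $n=0$ is vacuous (since $\gamma_0=0$ it carries no information about this commutator), so the step from $n=0$ to $n=1$ is circular. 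The fix is to prove $\alpha\mathbf{D}_q\mathbf{S}_q\mathbf{v}=\alpha_2\mathbf{S}_q\mathbf{D}_q\mathbf{v}+\texttt{U}_1\mathbf{D}_q^2\mathbf{v}$ separately by transposing the polynomial relation $\mathcal{D}_q\mathcal{S}_qf=\alpha\mathcal{S}_q\mathcal{D}_qf+\texttt{U}_1\mathcal{D}_q^2f$ (keeping track of the sign in each application of $\mathbf{D}_q$), take that as the true base case, and then run your inductive step; the scalar bookkeeping indeed closes via $\alpha\gamma_{n+1}=\alpha_{n+1}+\gamma_n$ and $\alpha\alpha_{n+2}=\alpha_2\alpha_{n+1}+(\alpha^2-1)\gamma_n$, together with $\alpha\mathbf{D}_q(\texttt{U}_1\mathbf{w})=\texttt{U}_1\mathbf{D}_q\mathbf{w}+(\alpha^2-1)\mathbf{S}_q\mathbf{w}$, which is your \eqref{def-fD_x-u} with $f=\texttt{U}_1$. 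With these two repairs the proposal becomes a complete proof.
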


The following result clarifies which are the $\mathcal{D}_q$-classical orthogonal polynomials.

\begin{proposition}\label{Prop}
The $\mathcal{D}_q$-classical sequences of orthogonal polynomials are the sequences of Askey-Wilson polynomials $(p_n(x; a, b, c, d\,|\, q))_{n=0}^\infty$  for some parameters $a, b, c, d$ including limiting cases as one or more of the parameters tends to $\infty$.
\end{proposition}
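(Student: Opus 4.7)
The plan is two-directional. The \emph{converse} (that every Askey-Wilson polynomial sequence and its limit cases are $\mathcal{D}_q$-classical) is the standard part: one writes down the canonical orthogonality form $\mathbf{u}$ for $p_n(x;a,b,c,d\,|\,q)$ and verifies by direct computation, using the explicit Askey-Wilson weight together with the action of $\mathbf{D}_q$ and $\mathbf{S}_q$ given by \eqref{def-Dx-fu}--\eqref{def-Sx-fu}, that $\mathbf{D}_q(\phi\mathbf{u})=\mathbf{S}_q(\psi\mathbf{u})$ holds for explicit $\phi\in\mathcal{P}_2$ and $\psi\in\mathcal{P}_1$ whose coefficients are symmetric functions of $a,b,c,d$. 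The limit cases follow by letting one or more parameters tend to infinity. So the substance is in the forward direction.

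\textbf{Forward direction.} Assume $\mathbf{u}$ is regular and satisfies $\mathbf{D}_q(\phi\mathbf{u})=\mathbf{S}_q(\psi\mathbf{u})$ with $\phi\in\mathcal{P}_2\setminus\mathcal{P}_{-1}$, $\psi\in\mathcal{P}_1\setminus\mathcal{P}_{-1}$, and let $(P_n)_{n\geq 0}$ be the corresponding monic OP. My plan is to derive from the Pearson equation a second-order Askey-Wilson eigenvalue equation
\begin{equation*}
f(x)\,\mathcal{D}_q^2 P_n + g(x)\,\mathcal{S}_q\mathcal{D}_q P_n = \lambda_n P_n,
\end{equation*}
with $f\in\mathcal{P}_2$, $g\in\mathcal{P}_1$ depending only on $\phi$ and $\psi$, and $\lambda_n\in\mathbb{C}$. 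Once this is in hand, Theorem \ref{T} immediately identifies $(P_n)$ with a multiple of $p_n(x;a,b,c,d\,|\,q)$ (or a limiting case) for suitable parameters $a,b,c,d$, and the proposition follows.

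To build the eigenvalue equation I would: (i) use \eqref{def-Dx-fu}--\eqref{def-Sx-fu} to rewrite $\mathbf{D}_q(\phi\mathbf{u})=\mathbf{S}_q(\psi\mathbf{u})$ as a single linear relation between $\mathbf{D}_q\mathbf{u}$ and $\mathbf{S}_q\mathbf{u}$ with polynomial coefficients made from $\phi,\psi,\texttt{U}_1,\texttt{U}_2$; (ii) apply the Leibniz-type rules \eqref{def-Dx-fg}--\eqref{def-fDxg} and \eqref{def-fD_x-u}--\eqref{def-fS_x-u} to transform $\bigl(f\mathcal{D}_q^2 P_n+g\mathcal{S}_q\mathcal{D}_q P_n\bigr)\mathbf{u}$ into a form involving only $P_n\mathbf{D}_q\mathbf{u}$, $P_n\mathbf{S}_q\mathbf{u}$ and their derivatives, for a suitable choice of $f,g$ built from $\phi,\psi$; (iii) choose $f,g$ so that the resulting expression simplifies, via the relation from (i), to a form supported on $P_n\mathbf{u}$; (iv) use regularity of $\mathbf{u}$ and orthogonality of $(P_n)$ to conclude that the bracketed polynomial, having degree at most $n$ by a degree count based on $\mathcal{D}_q x^n=\gamma_n x^{n-1}+\cdots$ and $\mathcal{S}_q x^n=\alpha_n x^n+\cdots$, must be a scalar multiple of $P_n$; the leading coefficient then yields $\lambda_n$ as a function of $n$ and the coefficients of $\phi,\psi$.

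The main obstacle will be step (ii)--(iii): the non-Leibniz character of the identities in Lemma \ref{lema}, with their $\texttt{U}_1,\texttt{U}_2$ corrections and the auxiliary parameter $\alpha$, makes the bookkeeping delicate, and one must verify that the coefficients $f$ and $g$ so produced genuinely satisfy $\deg f\leq 2$, $\deg g\leq 1$ and are independent of $n$ (the $n$-dependence being entirely absorbed by $\lambda_n$). Once this is settled, the match between the resulting $f,g$ and the shape prescribed in Theorem \ref{T}, parameterized by the elementary symmetric functions $\sigma_j$ of the Askey-Wilson parameters, becomes a purely algebraic identification that extracts the quadruple $(a,b,c,d)$ (or its limit) from the given pair $(\phi,\psi)$, completing the proof.
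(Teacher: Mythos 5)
Your proposal takes essentially the same route as the paper: reduce the Pearson-type equation $\mathbf{D}_q(\phi\mathbf{u})=\mathbf{S}_q(\psi\mathbf{u})$ to the second-order equation \eqref{Ismail} with $h=0$ and then invoke Theorem \ref{T} to identify the polynomials as Askey--Wilson polynomials or their limit cases. The paper handles the equivalence between \eqref{NUL-Pearson} and \eqref{Ismail} by citing \cite[Theorem 5]{FKN11} (noting it can also be proved following Maroni's method), while you sketch that same equivalence by hand via Lemma \ref{lema}, so the two arguments coincide in substance.
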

\begin{proof}
This follows from Theorem \ref{T}, after showing the equivalence between \eqref{NUL-Pearson} and \eqref{Ismail} with $h=0$ (see \cite[Theorem 5]{FKN11}). (The interested reader can also prove this easily following the proof of \cite[Proposition 2.8]{M93}.)
\end{proof}



From Definition \ref{NUL-def} we introduce $\mathbf{D}_q$-semiclassical linear forms in a natural way (see, for instance, \cite[Section 3]{M93}).

\begin{definition}\label{semi}
We call a linear form, in $\mathcal{P}^*$, $\mathbf{D}_q$-semiclassical if it is regular, not $\mathbf{D}_q$-classical, and there exist
two polynomials $\phi$ and $\psi$ with at least one of them nonzero, such that \eqref{NUL-Pearson} holds. $($We will call it simply semiclassical when no confusion can arise.$)$
\end{definition}
Under the conditions of Definition \ref{semi}, necessarily both $\phi$ and $\psi$ are not zero and $\deg \psi\geq1$. The class of ${\bf u}$ is the positive integer
\begin{align*}
s=\min_{(\phi,\psi)\in\mathcal{P}_{\bf u}}\max\{\mbox{\rm deg}\,\phi-2,\mbox{\rm deg}\,\psi-1\},
\end{align*}
where $\mathcal{P}_{\bf u}$ is the set of all pairs $(\phi,\psi)$ of nonzero polynomials such that \eqref{NUL-Pearson} holds. (Note that when $s=0$ we have the classical linear forms.)
The pair $(\phi,\psi)\in\mathcal{P}_{\bf u}$ where the class of ${\bf u}$ is attained is unique up to a constant factor. OP with respect to a ($\mathbf{D}_q$-)semiclassical form of class $s$ are called a ($\mathcal{D}_q$-)semiclassical orthogonal polynomials of class $s$.

We end this section with the following definition. 

\begin{definition}\label{adm}
We call a pair of polynomials $(\phi, \psi)$, $\phi(x)=a_p\,x^p+ (\text{lower degree}$ $\text{terms})$ and $\psi(x)=b_q\,x^q+ (\text{lower degree terms})$ $(p\in \mathbb{N}, q\in \mathbb{N}\setminus\{0\})$, admissible if $p-1\not=q$ or $a_p\,\gamma_{n}+b_q\,\alpha_{n-1}\not=0$ whenever $p-1=q$.
\end{definition}

\begin{obs}
We emphasize that from the point of view of \cite{CMP22a}, in this paper we are working with the particular lattice $x(s)=(q^{-s}+q^{s})/2$. If we consider the lattice $x(s)=\mathfrak{c}_6$, in the notation of  \cite{CMP22a}, we have $\alpha_{n-1}=1$ and $\gamma_n=n$, and so Definition \ref{adm} reduces the admissibility condition for the standard derivative $($see \cite[p. 119]{M91} and \cite[p. 46]{P93}$)$.
\end{obs}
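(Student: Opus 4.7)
The plan is to verify the two assertions of the Remark by specializing the general framework of \cite{CMP22a} to the linear-lattice case. First I would recall from \cite{CMP22a} that the scalars $\alpha_n$ and $\gamma_n$ are defined, on an arbitrary non-uniform lattice, as the leading coefficients of the images of monomials under the average and divided-difference operators, i.e.\ $\mathcal{S}_q x^n=\alpha_n x^n+(\text{lower})$ and $\mathcal{D}_q x^n=\gamma_n x^{n-1}+(\text{lower})$. On the Askey--Wilson lattice $x(s)=(q^{-s}+q^{s})/2$ considered throughout the paper, these are the explicit $q$-numbers displayed in \eqref{ag}. The lattice $x(s)=\mathfrak{c}_6$, in the notation of \cite{CMP22a}, is the limiting constant-step (linear) case, on which $\mathcal{S}_q$ collapses to the identity operator and $\mathcal{D}_q$ collapses to the standard derivative $d/dx$. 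Evaluating the leading coefficients on the monomial basis then yields at once $\alpha_{n-1}=1$ and $\gamma_n=n$ for every $n$, which is the first assertion.

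For the second assertion, it suffices to substitute these values into Definition \ref{adm}: the nonvanishing condition $a_p\gamma_n+b_q\alpha_{n-1}\neq 0$ for the case $p-1=q$ becomes $n\,a_p+b_q\neq 0$, so the admissibility requirement reduces to ``$p-1\neq q$ or $n\,a_p+b_q\neq 0$'', which is precisely the classical admissibility condition for the standard derivative as stated in \cite[p.~119]{M91} and \cite[p.~46]{P93}. The only point deserving care is the notational matching with \cite{CMP22a}: one must verify that the lattice $\mathfrak{c}_6$ indeed corresponds to the constant-step case in the parametrization used there and that, on this lattice, the operators $\mathcal{S}_q$ and $\mathcal{D}_q$ of \cite{CMP22a} degenerate exactly to the identity and to $d/dx$. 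Once this identification is in place, no additional computation is required; the remainder is a direct substitution.
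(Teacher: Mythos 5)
Your verification is correct and is essentially the argument the paper intends (the Remark is stated without proof, being a direct specialization): on the degenerate lattice $x(s)=\mathfrak{c}_6$ of \cite{CMP22a} the operators $\mathcal{S}_q$ and $\mathcal{D}_q$ reduce to the identity and to $d/dx$, so reading off leading coefficients on monomials gives $\alpha_{n-1}=1$ and $\gamma_n=n$, and substitution into Definition \ref{adm} yields the classical condition ``$p-1\neq q$ or $n\,a_p+b_q\neq 0$'' of Maroni and Petronilho. Your caveat about checking the identification of $\mathfrak{c}_6$ with the constant-step case is well placed, and is confirmed by the paper's own footnote and by the discussion in Section \ref{Sec6} where $\mathbf{S}_q$ is noted to become the identity in this degeneration.
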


\section{Characterization theorems}\label{Sec3}

The next theorem characterize $\mathcal{D}_q$-classical and $\mathcal{D}_q$-semiclassical orthogonal polynomials from the structure relation \eqref{0.2Dq-general}. 

\begin{theorem}\label{main-theorem}
Let ${\bf u} \in \mathcal{P}'$ be regular and let $(P_n)_{n\geq 0}$  denote the corresponding sequence of orthogonal polynomials.  The following conditions are equivalent:
\begin{itemize}
\item[i)] There exist three nonzero polynomials, $\phi$, $\psi$ and $\rho$, $(\psi,\rho)$ being an admissible pair, such that 
\begin{align*}
{\bf D}_q (\phi {\bf u}) =\psi {\bf u},\qquad {\bf S}_q (\phi {\bf u}) =\rho {\bf u}. 
\end{align*} 

\item[ii)] There exist $s\in\mathbb{N}$, complex numbers $(a_{n,j})_{j=0}^n$, and a polynomial $\phi$ such that 
\begin{align}\label{charact-split}
\phi\mathcal{D}_qP_n=\sum_{j=n-s} ^{n+\deg \phi -1} a_{n,j}P_j, 
\end{align} 
with $a_{n,n-s}\neq 0$ for all  $n\geq s$.
\end{itemize}
\end{theorem}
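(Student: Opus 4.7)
The plan is to prove the two implications by a duality argument in the style of Maroni, using the integration-by-parts identity \eqref{def-fDxg} of Lemma \ref{lema} together with the weak-$\ast$ dual-basis expansion $\mathbf{v}=\sum_{n\geq 0}\langle\mathbf{v},P_n\rangle\mathbf{a}_n$ in $\mathcal{P}'$, where $\mathbf{a}_n=h_n^{-1}P_n\mathbf{u}$.

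For (i) $\Rightarrow$ (ii), I would write $\phi\,\mathcal{D}_qP_n=\sum_{j\geq 0}a_{n,j}P_j$ and compute
$$
a_{n,j}\,h_j=\langle\phi\mathbf{u},P_j\,\mathcal{D}_qP_n\rangle.
$$
Applying \eqref{def-fDxg} to $P_j\,\mathcal{D}_qP_n$, transposing via the definitions of $\mathbf{D}_q,\mathbf{S}_q$, and inserting the two Pearson-type equations from (i) yields
\begin{align*}
a_{n,j}\,h_j=-\langle\psi\mathbf{u},(\mathcal{S}_qP_j-\alpha^{-1}\texttt{U}_1\mathcal{D}_qP_j)P_n\rangle-\alpha^{-1}\langle\rho\mathbf{u},P_n\,\mathcal{D}_qP_j\rangle.
\end{align*}
Orthogonality of $(P_n)_{n\geq 0}$ then kills both inner products whenever the factor multiplying $P_n$ has degree strictly less than $n$, which forces $a_{n,j}=0$ for $j<n-s$ with $s:=\max\{\deg\psi,\deg\rho-1\}$; the upper bound $j\leq n+\deg\phi-1$ is the obvious degree count. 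For the non-vanishing $a_{n,n-s}\neq 0$ when $n\geq s$, I would compare leading coefficients: using $\mathcal{S}_qx^j=\alpha_jx^j+\cdots$ and $\mathcal{D}_qx^j=\gamma_jx^{j-1}+\cdots$, the leading coefficient of $\mathcal{S}_qP_j-\alpha^{-1}\texttt{U}_1\mathcal{D}_qP_j$ reduces to a multiple of $\alpha_{j-1}$ and that of $\mathcal{D}_qP_j$ is $\gamma_j$. In the borderline case where both inner products contribute simultaneously at $j=n-s$, $a_{n,n-s}$ becomes (up to a nonzero factor) exactly the linear combination of leading coefficients of $\psi$ and $\rho$ weighted by $\alpha$ and $\gamma$ appearing in Definition \ref{adm}, so the admissibility hypothesis on $(\psi,\rho)$ is precisely what rules out cancellation; outside the borderline only one of the two terms survives and is nonzero on its own.

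For the converse (ii) $\Rightarrow$ (i), I would compute $\mathbf{D}_q(\phi\mathbf{u})$ in the dual basis: its $n$th coefficient equals $-\langle\mathbf{u},\phi\,\mathcal{D}_qP_n\rangle$, and by \eqref{charact-split} this is $-a_{n,0}h_0$ for $n\leq s$ and zero otherwise, whence
$$
\mathbf{D}_q(\phi\mathbf{u})=\psi\,\mathbf{u},\qquad \psi:=-h_0\sum_{n=0}^{s}\frac{a_{n,0}}{h_n}\,P_n,
$$
a nonzero polynomial of degree $s$ since $a_{s,0}\neq 0$. For the companion equation I would first rewrite $\phi\,\mathcal{S}_qP_n$ as a combination of $\phi\,\mathcal{D}_qP_{n\pm 1}$, $\phi\,\mathcal{D}_qP_n$ and $x\,\phi\,\mathcal{D}_qP_n$ via the elementary identity $\mathcal{S}_qP_n=\mathcal{D}_q(xP_n)-\alpha_1x\,\mathcal{D}_qP_n$ (a direct consequence of \eqref{def-Dx-fg}) together with the three-term recurrence \eqref{TTRR}; substituting \eqref{charact-split} and repeating the dual-basis argument gives $\mathbf{S}_q(\phi\mathbf{u})=\rho\,\mathbf{u}$ for some polynomial $\rho$ with $\deg\rho\leq s+1$. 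The admissibility of $(\psi,\rho)$ then follows by reading the leading-coefficient analysis of the previous paragraph in reverse.

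The main obstacle I foresee is the boundary leading-coefficient bookkeeping: identifying the precise scalar emerging from $\mathcal{S}_qP_j-\alpha^{-1}\texttt{U}_1\mathcal{D}_qP_j$ (which, after a short computation using the definitions of $\alpha_j,\gamma_j$ in \eqref{ag}, reduces to a clean multiple of $\alpha_{j-1}$) and matching it faithfully with the admissibility expression of Definition \ref{adm}. The rest of the proof is routine once this correspondence is in place and the identities of Lemma \ref{lema} are at hand.
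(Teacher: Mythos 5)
Your proposal follows the paper's strategy almost verbatim in the direction $i)\Rightarrow ii)$: the same coefficients $a_{n,j}\langle\mathbf{u},P_j^2\rangle=\langle\phi\mathbf{u},P_j\,\mathcal{D}_qP_n\rangle$, the same use of \eqref{def-fDxg} and of the two equations in $i)$, and the same leading-coefficient bookkeeping (your observation that $\mathcal{S}_qP_j-\alpha^{-1}\texttt{U}_1\mathcal{D}_qP_j$ has leading coefficient $\alpha^{-1}\alpha_{j-1}$ is exactly the computation the paper relies on). In the converse the construction of $\psi=R_s$ via the dual basis is also the paper's; for the second equation you expand $\phi\,\mathcal{S}_qP_n$ through $\mathcal{S}_qP_n=\mathcal{D}_q(xP_n)-\alpha x\,\mathcal{D}_qP_n$ and \eqref{TTRR}, whereas the paper takes the $n=1$ instance of $\mathbf{D}_q(\phi P_n\mathbf{u})=R_{n+s}\mathbf{u}$ and applies \eqref{def-Dx-fu}; these are equivalent. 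The genuine divergence is the admissibility of the constructed pair: the paper proves it through the companion expansion \eqref{eq-with-Sq}, the second-order difference equation \eqref{de} and its explicit solution $y(n)=k_1q^{n/2}+k_2q^{-n/2}$, arriving only at the end at $d_n$ being a nonzero multiple of $a_{n+s,n}$. Your shortcut---feed the two equations just obtained (with $\deg\psi=s$ exactly and $\deg\rho\le s+1$) back into the forward computation---gives this identity at once: at $j=n-s$ one gets $-\alpha\langle\mathbf{u},P_{n-s}^2\rangle a_{n,n-s}=\bigl(b\,\alpha_{n-s-1}+\lambda\,\gamma_{n-s}\bigr)\langle\mathbf{u},P_n^2\rangle$, where $b$ and $\lambda$ are the coefficients of $x^{s}$ in $\psi$ and of $x^{s+1}$ in $\rho$, so the hypothesis $a_{n,n-s}\neq0$ for all $n\ge s$ is precisely the admissibility condition. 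This is valid, bypasses the paper's longest computation, and deserves to be written out rather than left as ``reading the analysis in reverse''.

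Two points should be patched. First, condition $i)$ requires $\rho\neq0$, which your sketch never checks; the paper gets it by noting that $\mathbf{S}_q(\phi\mathbf{u})=\mathbf{0}$ would force $\phi\mathbf{u}=\mathbf{0}$, contradicting regularity. Second, in $i)\Rightarrow ii)$ your claim that off the borderline ``only one of the two terms survives and is nonzero on its own'' tacitly assumes $\deg\rho\le\deg\psi+1$: if $\deg\rho-1>\deg\psi$, then at $n=s$ the surviving factor is $\gamma_0=0$ (indeed $\mathcal{D}_qP_0=0$), so $a_{s,0}=0$ and the stated nonvanishing fails at that index. The paper hides the same issue behind its unexplained ``no loss of generality in assuming $r-1\le s$'', so this is a shared, not a new, lacuna, but a sentence explaining why that case is excluded (or harmless) would make your argument---and the paper's---airtight.
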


\begin{proof}
$i) \implies ii)$: Write  $\rho(x)=a_r\,x^r+ (\text{lower degree}$ $\text{terms})$ and $\psi(x)=b_s\,x^s+ (\text{lower degree terms})$ $(r\in \mathbb{N}\setminus\{0\}, p\in \mathbb{N})$. Set
$d_n=a_r \alpha_{n-1}+b_s \gamma_n$. Clearly, 
\begin{align*}
\phi\mathcal{D}_qP_n=\sum_{j=0} ^{n+\deg \phi -1} a_{n,j}P_j, 
\end{align*} 
where
\begin{align*}
a_{n,j}= \frac{\left\langle {\bf u}, \phi P_j \mathcal{D}_qP_n  \right\rangle}{\left\langle {\bf u},  P_j ^2 \right\rangle}.
\end{align*}
From \eqref{def-fDxg} we get
\begin{align*}
\left\langle {\bf u},  P_j ^2 \right\rangle a_{n,j}&= \left\langle {\bf u}, \phi P_j \mathcal{D}_qP_n  \right\rangle=\left\langle \phi {\bf u},  P_j \mathcal{D}_qP_n  \right\rangle\\[7pt]
&=\left\langle \phi {\bf u}, \mathcal{D}_q\left(\left(\mathcal{S}_q P_j-\alpha ^{-1}\texttt{U}_1 \mathcal{D}_qP_j \right)P_n\right)-\alpha ^{-1}\mathcal{S}_q (P_n\mathcal{D}_q P_j)   \right\rangle\\[7pt]
&=-\left\langle {\bf D}_q(\phi {\bf u}), \left(\mathcal{S}_q P_j-\alpha ^{-1}\texttt{U}_1 \mathcal{D}_qP_j \right)P_n \right\rangle -\alpha ^{-1}\left\langle {\bf S}_q(\phi {\bf u}), P_n\mathcal{D}_qP_j \right\rangle \\[7pt]
&=-\left\langle {\bf u}, \left(\psi\left(\mathcal{S}_q P_j-\alpha ^{-1}\texttt{U}_1 \mathcal{D}_qP_j \right)  +\alpha^{-1}\rho \mathcal{D}_qP_j \right)P_n \right\rangle.
\end{align*}
There is no loss of generality in assuming $r-1\leq s$. For $r-1<s$, we have
\begin{align*}
-\alpha \left\langle {\bf u},  P_j ^2 \right\rangle a_{n,j}=\left\{
    \begin{array}{ll}
        a_r \alpha_{n-s-1} \left\langle {\bf u}, P_n ^2 \right\rangle, & j=n-s, \\[7pt]
        0, &  j<n-s.
    \end{array}
\right.
\end{align*}
and for $r-1=s$, we get
\begin{align*}
-\alpha \left\langle {\bf u},  P_j ^2 \right\rangle a_{n,j}=\left\{
    \begin{array}{ll}
        d_{n-s}\left\langle {\bf u}, P_n ^2 \right\rangle,  &j=n-s, \\[7pt]
        0, &j<n-s.
    \end{array}
\right.
\end{align*}
Hence $a_{n,n-s}\neq 0$, for $n\geq s$ and $ii)$ follows.

$ii) \implies i)$: Let $({\bf a}_n)_{n\geq 0}$ be the dual basis associated to $(P_n)_{n\geq 0}$. Note that $ii)$ yields
\begin{align*}
\left\langle {\bf D}_q(\phi {\bf a}_n),P_j  \right\rangle &=-\left\langle  {\bf a}_n,\phi \mathcal{D}_q P_j  \right\rangle =-\sum_{l=j-s} ^{j+\deg \phi -1} a_{j,l}\left\langle {\bf a}_n, P_l  \right\rangle\\[7pt]
& =\left\{
    \begin{array}{ll}
        -a_{j,n},  & n-\deg \phi +1\leq j\leq n+s, \\[7pt]
        0, & \mbox{otherwise.} 
    \end{array}
\right.
\end{align*}
Writing
\begin{align*}
{\bf D}_q(\phi {\bf a}_n) = \sum_{j=0} ^{\infty} \left\langle {\bf D}_q(\phi {\bf a}_n),P_j  \right\rangle {\bf a}_j,
\end{align*}
in the sense of the weak dual topology in $\mathcal{P}'$, and taking into account that $\left\langle {\bf u},P_n ^2 \right\rangle {\bf a}_n =P_n {\bf u}$, we get
\begin{align*}
{\bf D}_q(\phi P_n {\bf u}) =R_{n+s} {\bf u},\quad R_{n+s}=-\left\langle {\bf u},P_n ^2 \right\rangle \sum_{j=n-\deg \phi +1} ^{n+s}    \frac{a_{j,n}}{\left\langle {\bf u},P_j ^2  \right\rangle}P_j.
\end{align*}
(Note that $R_{n+s}$ is a polynomial of degree $n+s$.) Taking $n=0$ and $n=1$ in the above expression, we have 
\begin{align}
{\bf D}_q(\phi {\bf u}) &=R_{s} {\bf u},\label{eq-Rs}\\[7pt]
{\bf D}_q(\phi P_1 {\bf u}) &=R_{s+1} {\bf u}.\label{eq-Rs+1}
\end{align}
From \eqref{eq-Rs+1}, and using \eqref{def-Dx-fu} and \eqref{eq-Rs}, we obtain
\begin{align*}
\alpha R_{s+1} {\bf u}&=\alpha {\bf D}_q(\phi P_1 {\bf u})= \big(\alpha \mathcal{S}_qP_1 -\texttt{U}_1\mathcal{D}_qP_1\big){\bf D}_q(\phi {\bf u}) +\mathcal{D}_qP_1{\bf S}_q(\phi {\bf u})\\[7pt]
&=(x-\alpha B_0)R_s{\bf u} +{\bf S}_q(\phi {\bf u}).
\end{align*}
Hence
\begin{align}
{\bf S}_q(\phi {\bf u}) = \big(\alpha R_{s+1}-(x-\alpha B_0)R_s \big){\bf u}.\label{eq-Sxfinal}
\end{align}
Note that $\alpha R_{s+1}-(x-\alpha B_0)R_s\neq 0$. To obtain a contradiction, suppose that the last assertion is false. Consequently, $\phi {\bf u}=0$ with $\phi \neq 0$  and ${\bf u}$ regular, which is impossible. We claim that $\big(R_s, \alpha R_{s+1}-(x-\alpha B_0)R_s\big)$ is an admissible pair. According to Definition \ref{adm}, this is equivalent to showing that
$d_n=a_r \alpha_{n-1} +b_s\gamma_n\neq 0$,
where
$$
a_r=-\frac{\left\langle {\bf u}, P_0 ^2\right\rangle }{\left\langle {\bf u}, P_{s} ^2\right\rangle} a_{s,0}, \quad b_s=-\alpha \frac{\left\langle {\bf u}, P_1 ^2\right\rangle }{\left\langle {\bf u}, P_{s+1} ^2\right\rangle} a_{s+1,1} -a_r.
$$
For $\deg\,(\alpha R_{s+1}-(x-\alpha B_0)R_s)<s+1$, we have $b_s=0$, and so $d_n=a_r \alpha_{n-1} \neq 0$. Assume $\deg\,(\alpha R_{s+1}-(x-\alpha B_0)R_s)=s+1$. (Note that in this case $b_s\neq 0$.) We now claim
 \begin{align}\label{eq-with-Sq}
\phi\mathcal{S}_qP_n=\dps \sum_{j=n-s-1} ^{n+\deg \phi} \widetilde{a}_{n,j}P_j, \quad \widetilde{a}_{n,n-s-1}=-\alpha a_{n,n-s}C_{n-s} +a_{n-1,n-s-1}C_n,
\end{align}
where we have assumed that $(P_n)_{n\geq 0}$ satisfies \eqref{TTRR}. Indeed, apply $\phi \mathcal{D}_q$ to \eqref{TTRR} to obtain
\begin{align*}
\phi (x)\mathcal{S}_qP_n(x) &=\phi (x)\left(-\alpha x\mathcal{D}_qP_{n}(x) +\mathcal{D}_qP_{n+1}(x)+B_n\mathcal{D}_qP_{n}(x)+C_n\mathcal{D}_qP_{n-1}(x)\right)\\[7pt]
&=-\alpha x\sum_{j=n-s} ^{n+\deg \phi-1} a_{n,j}P_j(x)+\sum_{j=n-s+1} ^{n+\deg \phi} a_{n+1,j}P_j(x) \\[7pt]
&\quad +B_n\sum_{j=n-s} ^{n+\deg \phi-1} a_{n,j}P_j(x) +C_n \sum_{j=n-s-1} ^{n+\deg \phi-2} a_{n-1,j}P_j(x),
\end{align*}
and \eqref{eq-with-Sq} follows by using \eqref{TTRR}. We also claim that
\begin{align}
&a_{n,n-s}= \left(k_1q^{n/2}+k_2q^{-n/2} \right)\prod_{j=n-s+1} ^n C_j,\qquad n\geq s.\label{expr-ann-s}\\[7pt]
&2 \widetilde{a}_{n,n-s-1}=-(q^{1/2}-q^{-1/2})\left(k_1q^{n/2}-k_2q^{-n/2} \right)\prod_{j=n-s} ^n C_j,\qquad n\geq s+1,\label{expr-hatann-s}
\end{align}
where $k_1$ and $k_2$ are complex numbers. Indeed, apply $\phi \mathcal{S}_q$ to \eqref{TTRR} and use \eqref{def-Sx-fg} to obtain
\begin{align*}
&\texttt{U}_2(x) \phi(x) \mathcal{D}_qP_n(x)+\alpha x \phi(x) \mathcal{S}_qP_n(x)\\[7pt]
&\quad =\phi(x) \mathcal{S}_qP_{n+1}(x) +B_n\phi(x) \mathcal{S}_qP_n(x)+C_n\phi(x) \mathcal{S}_qP_{n-1}(x).
\end{align*}
Combining \eqref{charact-split}, \eqref{eq-with-Sq} and \eqref{TTRR}, we obtain
$$
\sum_{j=n-s-2}^{n+\deg \phi +1} r_{n,j} P_j=0.
$$
Since $(P_n)_{n\geq 0}$ is a free system, we have $r_{n,j}=0$ for all $j$.  By identifying the coefficient of $P_{n-s-2}$, we find 
\begin{align*}
0=r_{n, n-s-2} = (\alpha ^2-1)a_{n,n-s}C_{n-s}C_{n-s-1}+\alpha \widetilde{a}_{n,n-s-1}C_{n-s-1}-\widetilde{a}_{n-1,n-s-2}C_n.
\end{align*} 
Using the expression of $\widetilde{a}_{n,n-s-1}$ given in \eqref{eq-with-Sq}, we get the following second order linear homogeneous equation:
\begin{align}\label{de}
y(n) -2\alpha y(n-1) + y(n-2)=0, 
\end{align}
where
$$
y(n)=\displaystyle \frac{a_{n,n-s}}{\prod_{j=n-s+1} ^n C_j}, \qquad n\geq s.
$$
Note that $q^{1/2}$ and $q^{-1/2}$ are the solutions of the characteristic equation of \eqref{de} and, therefore, we find
\begin{align*}
y(n) =k_1q^{n/2}+k_2q^{-n/2},
\end{align*}
and \eqref{expr-ann-s} follows. Moreover, from the expression of $\widetilde{a}_{n,n-s-1}$ given in \eqref{eq-with-Sq}, \eqref{expr-hatann-s} follows. Finally, using \eqref{expr-ann-s} and \eqref{expr-hatann-s}, we obtain
\begin{align*}
d_n&=-\frac{\left\langle {\bf u}, P_0 ^2\right\rangle }{\left\langle {\bf u}, P_s ^2\right\rangle }a_{s,0} \alpha_{n-1}+\frac{\left\langle {\bf u}, P_0 ^2\right\rangle }{\left\langle {\bf u}, P_{s+1} ^2\right\rangle }\widetilde{a}_{s+1,0} \gamma_n\\[7pt]
&=\frac{\left\langle {\bf u}, P_0 ^2\right\rangle }{\left\langle {\bf u}, P_{s+1} ^2\right\rangle } \left(-a_{s,0}C_{s+1}\alpha_{n-1} +\widetilde{a}_{s+1,0}\gamma_n  \right)\\[7pt]
&=-\frac{1}{2}\left(2\alpha_{n-1}(k_1q^{s/2}+k_2q^{-s/2}) +(q^{n/2}-q^{-n/2})(k_1q^{(s+1)/2}-k_2q^{-(s+1)/2})   \right)\\[7pt]
&=-\alpha \big(k_1q^{(n+s)/2}+k_2q^{-(n+s)/2}  \big) \\[7pt]
&=-\alpha\,a_{n+s,n} \prod_{j=1} ^{n+s} C_j^{-1}\neq 0,
\end{align*}
and so $(R_s,\alpha R_{s+1}-(x-\alpha B_0)R_s)$ is an admissible pair. Thus, $i)$ follows from \eqref{eq-Rs} and \eqref{eq-Sxfinal}, and the theorem is proved.
\end{proof}

\begin{obs}
A regular linear form ${\bf u}$ satisfying Theorem \ref{main-theorem} $i)$ is classical or semiclassical. Indeed, using \eqref{DxnSx-u} and \eqref{def-fD_x-u} we get
\begin{align*}
{\bf D}_q(\rho {\bf u})&= {\bf D}_q{\bf S}_q(\phi {\bf u})=(\alpha -\alpha ^{-1}){\bf S}_q{\bf D}_q(\phi {\bf u}) +\alpha ^{-1}\texttt{U}_1{\bf D}_q ^2 (\phi {\bf u})\\[7pt]
&=(\alpha -\alpha ^{-1}){\bf S}_q(\psi {\bf u}) +\alpha ^{-1}\texttt{U}_1{\bf D}_q (\psi {\bf u})\\[7pt]
&=(\alpha -\alpha ^{-1}){\bf S}_q(\psi {\bf u}) +\alpha ^{-1}(\alpha {\bf D}_q(\texttt{U}_1\psi {\bf u}) -(\alpha ^2-1){\bf S}_q(\psi {\bf u}))\\[7pt]
&=\alpha {\bf S}_q(\psi {\bf u}) +{\bf D}_q(\texttt{U}_1\psi {\bf u}).
\end{align*}
Thus  ${\bf D}_q\big( (\rho -\texttt{U}_1\psi){\bf u} \big)=\alpha {\bf S}_q(\psi {\bf u})$ as claimed. Theorem \ref{main-theorem} is the analogue of \cite[Theorem 3.1]{M87} $($see also \cite{M91}$)$, from which a distributional version of \cite[Theorem 1.1]{BLN87} follows. 
\end{obs}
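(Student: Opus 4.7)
The plan is to combine the two functional equations ${\bf D}_q(\phi {\bf u}) = \psi {\bf u}$ and ${\bf S}_q(\phi {\bf u}) = \rho {\bf u}$ given by Theorem \ref{main-theorem} i) into a single Pearson-type identity of the form ${\bf D}_q(\Phi {\bf u}) = {\bf S}_q(\Psi {\bf u})$ with $\Psi \neq 0$. Once such an identity is in place, Definitions \ref{NUL-def} and \ref{semi} immediately classify the regular form ${\bf u}$ as classical or semiclassical, depending on the degrees of $\Phi$ and $\Psi$, which is exactly what the remark asserts.

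The strategy is to apply ${\bf D}_q$ to the second relation, obtaining ${\bf D}_q(\rho {\bf u}) = {\bf D}_q {\bf S}_q(\phi {\bf u})$, and then eliminate the mixed operator ${\bf D}_q {\bf S}_q$ using the commutation formula \eqref{DxnSx-u} at $n=1$. After substituting the first relation ${\bf D}_q(\phi {\bf u}) = \psi {\bf u}$ everywhere on the right, the outcome becomes a linear combination of the genuine Pearson term ${\bf S}_q(\psi {\bf u})$ and the offending mixed term $\texttt{U}_1 {\bf D}_q(\psi {\bf u})$. The key step is to clear the latter by invoking \eqref{def-fD_x-u} with $f = \texttt{U}_1$; the elementary identities $\mathcal{S}_q \texttt{U}_1 = \alpha \texttt{U}_1$ and $\mathcal{D}_q \texttt{U}_1 = \alpha^2 - 1$ (which follow from $\mathcal{S}_q x = \alpha x$ and $\mathcal{D}_q x = 1$) rewrite $\texttt{U}_1 {\bf D}_q(\psi {\bf u})$ as $\alpha {\bf D}_q(\texttt{U}_1 \psi {\bf u}) - (\alpha^2 - 1){\bf S}_q(\psi {\bf u})$, which separates cleanly into a pure ${\bf D}_q$-piece and a pure ${\bf S}_q$-piece.

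Collecting contributions yields the target identity ${\bf D}_q\bigl((\rho - \texttt{U}_1 \psi){\bf u}\bigr) = \alpha\, {\bf S}_q(\psi {\bf u})$. Since $\psi$ is nonzero by hypothesis and $\alpha = (q^{1/2} + q^{-1/2})/2 \neq 0$, the right-hand side does not vanish, and the equation has exactly the form \eqref{NUL-Pearson}. Hence ${\bf u}$ is $\mathbf{D}_q$-classical when $\deg(\rho - \texttt{U}_1 \psi) \leq 2$ and $\deg \psi \leq 1$, and $\mathbf{D}_q$-semiclassical otherwise. The only delicate point I anticipate is bookkeeping of the $\alpha$-dependent scalars when merging the outputs of \eqref{DxnSx-u} and \eqref{def-fD_x-u} (since the ${\bf S}_q(\psi{\bf u})$ coefficients from the two sources partially cancel); no substantive obstacle arises, as every step takes place at the purely algebraic level of linear forms in $\mathcal{P}^*$.
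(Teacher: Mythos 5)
Your proposal is correct and follows essentially the same route as the paper: apply ${\bf D}_q$ to ${\bf S}_q(\phi{\bf u})=\rho{\bf u}$, commute via \eqref{DxnSx-u} with $n=1$, substitute ${\bf D}_q(\phi{\bf u})=\psi{\bf u}$, and absorb the term $\texttt{U}_1{\bf D}_q(\psi{\bf u})$ through \eqref{def-fD_x-u} using $\mathcal{S}_q\texttt{U}_1=\alpha\texttt{U}_1$ and $\mathcal{D}_q\texttt{U}_1=\alpha^2-1$, arriving at ${\bf D}_q\big((\rho-\texttt{U}_1\psi){\bf u}\big)=\alpha{\bf S}_q(\psi{\bf u})$. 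The only bookkeeping point you flagged is real: the coefficient of ${\bf S}_q{\bf D}_q(\phi{\bf u})$ coming from \eqref{DxnSx-u} is $\alpha_2/\alpha=2\alpha-\alpha^{-1}$ (the paper's intermediate display shows $\alpha-\alpha^{-1}$, apparently a typo), and only with this value does the partial cancellation produce the stated coefficient $\alpha$ on ${\bf S}_q(\psi{\bf u})$.
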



Although Theorem \ref{main-theorem} could be very useful in many situations it is little precise regarding the classical or semiclassical character of the linear form.  In the following theorem we will be more precise in this sense, but in counterpart we lose the direct connection with the equation \eqref{NUL-Pearson}.
\begin{theorem}\label{theo-2b}
Let ${\bf u} \in \mathcal{P}'$ be regular and let $(P_n)_{n\geq 0}$  denote the corresponding sequence of orthogonal polynomials satisfying \eqref{TTRR}. Suppose that there exist a nonnegative integer $s$, complex numbers $(a_{n,j})_{j=0}^n$, and a polynomial $\phi$ such that
\begin{align}\label{2-f}
\phi\mathcal{D}_{q}P_{n}=\sum_{j=n-s} ^{n+\deg \phi -1}  a_{n,j}P_{j},
\end{align}
with $a_{n.n-s}\neq 0$ for all $n\geq s$.
Then there exist $\Phi \in \mathcal{P}_{s+1}\setminus\mathcal{P}_{-1}$ and $\Psi \in \mathcal{P}_s\setminus\mathcal{P}_{-1}$ such that 
\begin{align}\label{eq-class-of-u-AW-case}
\Phi\, {\bf D}_q{\bf u} =\Psi\, {\bf S}_q{\bf u},
\end{align}
where $\Phi$ and $\Psi$ never have more than $s-1$ common zeros. If $\Phi$ and $\Psi$ have $s-1$ common zeros, then ${\bf u}$ is ${\bf D}_q$-classical. Otherwise, ${\bf u}$ is ${\bf D}_q$-semiclassical of class $s-1-r$, $r$ being the number of common zeros of $\Phi$ and $\Psi$.
\end{theorem}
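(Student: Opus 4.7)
The strategy is to apply Theorem \ref{main-theorem} to the structure relation \eqref{2-f} in order to convert it into a pair of functional equations for $\mathbf{u}$, combine them using the commutativity of $\mathbf{D}_q$ and $\mathbf{S}_q$ on $\mathcal{P}'$, and finally read off the class of $\mathbf{u}$ from the common zeros of the resulting pair $(\Phi, \Psi)$ via a uniqueness argument.

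By Theorem \ref{main-theorem}, the hypothesis yields nonzero polynomials $\phi$, $\psi$, $\rho$, with $(\psi,\rho)$ admissible, such that $\mathbf{D}_q(\phi\mathbf{u}) = \psi\mathbf{u}$ and $\mathbf{S}_q(\phi\mathbf{u}) = \rho\mathbf{u}$; tracing the explicit construction $\psi = R_s$, $\rho = \alpha R_{s+1} - (x-\alpha B_0)R_s$ from the proof of that theorem gives $\deg\psi \le s$ and $\deg\rho \le s+1$. Since $\gamma_0 = 0$ and $\alpha_1 = \alpha$, setting $n=0$ in \eqref{DxnSx-u} yields $\mathbf{D}_q\mathbf{S}_q = \mathbf{S}_q\mathbf{D}_q$ on $\mathcal{P}'$; applying this to $\phi\mathbf{u}$ gives the Pearson-type identity $\mathbf{D}_q(\rho\mathbf{u}) = \mathbf{S}_q(\psi\mathbf{u})$. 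I would then expand the left-hand side using \eqref{def-Dx-fu} and the right-hand side using \eqref{def-Sx-fu}, and match the coefficients of $\mathbf{D}_q\mathbf{u}$ and $\mathbf{S}_q\mathbf{u}$ to arrive at $\Phi\,\mathbf{D}_q\mathbf{u} = \Psi\,\mathbf{S}_q\mathbf{u}$ with
$$\Phi = \alpha\mathcal{S}_q\rho - \texttt{U}_1\mathcal{D}_q\rho - (\alpha^2\texttt{U}_2 - \texttt{U}_1^2)\mathcal{D}_q\psi,\qquad \Psi = \alpha\mathcal{S}_q\psi + \texttt{U}_1\mathcal{D}_q\psi - \mathcal{D}_q\rho.$$
A direct degree count, using $\deg\texttt{U}_1 = 1$, $\deg\texttt{U}_2 = 2$, and that $\mathcal{S}_q$ preserves degree while $\mathcal{D}_q$ lowers it by one, gives $\deg\Phi \le s+1$ and $\deg\Psi \le s$. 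For the nonvanishing of $\Phi$ and $\Psi$, I would compute their leading coefficients via the identities $\alpha\alpha_{s+1} - (\alpha^2-1)\gamma_{s+1} = \alpha_s$ and $\alpha\alpha_s + (\alpha^2-1)\gamma_s = \alpha_{s+1}$, which reduce simultaneous vanishing to the equality $\alpha_s\alpha_{s+1} = (\alpha^2-1)\gamma_s\gamma_{s+1}$ — this fails because the two sides differ by $2(q^{1/2}+q^{-1/2}) \ne 0$; the residual cases in which just one of them vanishes are ruled out by combining $\Phi\,\mathbf{D}_q\mathbf{u} = \Psi\,\mathbf{S}_q\mathbf{u}$ with the companion formulas $\alpha[(\mathcal{S}_q\phi)^2-\texttt{U}_2(\mathcal{D}_q\phi)^2]\,\mathbf{D}_q\mathbf{u} = [(\alpha\mathcal{S}_q\phi+\texttt{U}_1\mathcal{D}_q\phi)\psi - \mathcal{D}_q\phi\,\rho]\,\mathbf{u}$ and its analogue for $\mathbf{S}_q\mathbf{u}$ (obtained by inverting \eqref{def-Dx-fu}--\eqref{def-Sx-fu}), together with regularity of $\mathbf{u}$.

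To tie the common zeros of $(\Phi, \Psi)$ to the class of $\mathbf{u}$, I would first establish uniqueness up to a common polynomial factor: if $(\Phi_i,\Psi_i)$ both satisfy $\Phi_i\mathbf{D}_q\mathbf{u} = \Psi_i\mathbf{S}_q\mathbf{u}$, then the form identity $(\Phi_1\Psi_2-\Phi_2\Psi_1)\,\mathbf{D}_q\mathbf{u} = 0$ upgrades, via the displayed companion formulas and regularity of $\mathbf{u}$, to the polynomial identity $\Phi_1\Psi_2 = \Phi_2\Psi_1$. Applying the same expansion to the canonical Pearson pair $(\tilde\phi,\tilde\psi)$ realizing the class $t$ of $\mathbf{u}$ (for which $\gcd(\tilde\phi,\tilde\psi)=1$, $\deg\tilde\phi\le t+2$, $\deg\tilde\psi\le t+1$) produces a coprime split pair $(\Phi_{\min}, \Psi_{\min})$ of degrees $\le(t+2, t+1)$. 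Uniqueness then forces $\Phi = D\Phi_{\min}$ and $\Psi = D\Psi_{\min}$ with $\deg D = (s+1)-(t+2) = s-1-t$, so the number of common zeros of $\Phi$ and $\Psi$ equals $r = s-1-t$. Hence $t = s-1-r$, and $r\le s-1$ since $t \ge 0$, with $r = s-1$ iff $t = 0$, i.e., $\mathbf{u}$ is classical.

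The main obstacle is the last step, namely showing that the split pair produced from the canonical Pearson pair is itself coprime. A nontrivial common factor of $(\Phi_{\min}, \Psi_{\min})$, transported through the inverse correspondence $(\Phi,\Psi) \mapsto (\mathcal{S}_q\Phi + \texttt{U}_2\mathcal{D}_q\Psi,\ \mathcal{S}_q\Psi + \mathcal{D}_q\Phi)$ supplied by \eqref{def-fD_x-u}--\eqref{def-fS_x-u}, should yield a Pearson pair of strictly smaller degrees, contradicting the minimality of $(\tilde\phi,\tilde\psi)$; making this transport precise — in particular, verifying that common factors genuinely persist under the inverse correspondence — is the technical core of the argument.
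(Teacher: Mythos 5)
Your derivation of the key equation \eqref{eq-class-of-u-AW-case} breaks at the commutation step. Setting $n=0$ in \eqref{DxnSx-u} gives only the vacuous identity $\alpha\,\mathbf{S}_q\mathbf{u}=\alpha_1\mathbf{S}_q\mathbf{u}$; it does \emph{not} give $\mathbf{D}_q\mathbf{S}_q=\mathbf{S}_q\mathbf{D}_q$ on $\mathcal{P}'$. The relevant case is $n=1$, which reads $\alpha\,\mathbf{D}_q\mathbf{S}_q\mathbf{u}=\alpha_2\,\mathbf{S}_q\mathbf{D}_q\mathbf{u}+\texttt{U}_1\mathbf{D}_q^2\mathbf{u}$ with $\alpha_2=2\alpha^2-1\neq\alpha$: the two operators genuinely fail to commute, and this failure is exactly what separates the Askey--Wilson calculus from the derivative case. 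Consequently your Pearson-type identity $\mathbf{D}_q(\rho\mathbf{u})=\mathbf{S}_q(\psi\mathbf{u})$ is false in general; the correct consequence of Theorem \ref{main-theorem}\,$i)$ (see the Remark following it) is $\mathbf{D}_q\big((\rho-\texttt{U}_1\psi)\mathbf{u}\big)=\alpha\,\mathbf{S}_q(\psi\mathbf{u})$, the extra term $\texttt{U}_1\psi$ being precisely the contribution of the $\gamma_1\texttt{U}_1\mathbf{D}_q^2$ term you dropped. Since the passage from a relation of the form $\mathbf{D}_q(\rho\mathbf{u})=\mathbf{S}_q(\psi\mathbf{u})$ to a relation in $\mathbf{D}_q\mathbf{u}$ and $\mathbf{S}_q\mathbf{u}$ via \eqref{def-Dx-fu} and \eqref{def-Sx-fu} is an equivalence, the pair $(\Phi,\Psi)$ you display satisfies $\Phi\,\mathbf{D}_q\mathbf{u}=\Psi\,\mathbf{S}_q\mathbf{u}$ only if the false identity holds, so \eqref{eq-class-of-u-AW-case} is not established by your argument (and your $(\Phi,\Psi)$ differs from the correct pair \eqref{Phi}--\eqref{Psi} by terms in $\mathcal{S}_q(\texttt{U}_1\psi)$ and $\mathcal{D}_q(\texttt{U}_1\psi)$). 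The degree bookkeeping does survive the repair, since $\deg(\rho-\texttt{U}_1\psi)\le s+1$; the paper's proof is essentially your computation done with the correct commutation relation, leading to \eqref{320-f} with $R_{s+1}=Q_{s+1}-(\alpha x-B_0)Q_s$ and $Q_s$ in the roles you intended for $\rho$ and $\psi$.

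The second gap is the endgame. Your identification of the class through a ``minimal split pair'' rests on the coprimality of the split pair produced from the canonical Pearson pair, which you yourself flag as unproven; moreover you obtain the bound of at most $s-1$ common zeros only as a by-product of that unproven machinery. The paper closes both points without it: the bound on common zeros is proved directly by assuming $\Psi$ divides $\Phi$ and showing that \eqref{eq-class-of-u-AW-case} would then force $\mathbf{D}_q\big(\tfrac12(\alpha x-1)\mathbf{u}\big)=\mathbf{S}_q\mathbf{u}$, which contradicts regularity by Theorem \ref{regAW}; and exactness of the class is obtained, in the coprime case, by supposing a Pearson pair of class at most $s-2$, splitting it into $(\widetilde\Phi,\widetilde\Psi)$, using regularity and the coprimality of $(\Phi,\Psi)$ to force proportionality, and reaching the degree contradiction $Q_s=a\psi$ with $\deg Q_s=s>\deg\psi$. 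So even after repairing the commutation step, your route would still need the missing coprimality lemma, whereas a degree-contradiction argument of this kind closes the proof without transporting common factors through the inverse correspondence.
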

\begin{proof}
We can now proceed analogously to the proof of Theorem \ref{main-theorem} to obtain
\begin{align*}
{\bf D}_q(\phi P_n {\bf u}) =-Q_{n+s} {\bf u},\quad Q_{n+s}=\left\langle {\bf u},P_n ^2 \right\rangle \sum_{j=n-\deg \phi +1} ^{n+s}    \frac{a_{j,n}}{\left\langle {\bf u},P_j ^2  \right\rangle}P_j.
\end{align*}
(Note that $Q_{n+s}$ is a polynomial of degree $n+s$.) Taking $n=0$ and $n=1$, in the above expression, we have 
\begin{align}
{\bf D}_q(\phi {\bf u}) &=-Q_{s} {\bf u},\label{eq-Rs-f}\\[7pt]
{\bf D}_q(\phi P_1 {\bf u}) &=-Q_{s+1} {\bf u}.\label{eq-Rs+1-f}
\end{align}
Using \eqref{eq-Rs-f} and \eqref{def-Dx-fu}, we have
\begin{align*}
-\alpha Q_{s+1}(x){\bf u}&=\alpha {\bf D}_q\big(P_1(x)\phi{\bf u})=(x-\alpha B_0){\bf D}_q(\phi {\bf u})+{\bf S}_q(\phi {\bf u})\\[7pt]
&=-(x-\alpha B_0)Q_s(x){\bf u} +{\bf S}_q (\phi {\bf u}),
\end{align*}
and so $$( (x-\alpha B_0)Q_s(x)-\alpha Q_{s+1}(x)) {\bf u}={\bf S}_q (\phi {\bf u}).$$ Applying ${\bf D}_q$ to the above equation, and using \eqref{DxnSx-u}, \eqref{eq-Rs-f}, and \eqref{def-fD_x-u}, we can assert that
\begin{align*}
&-\alpha{\bf D}_q(( (x-\alpha B_0)Q_s(x)-\alpha Q_{s+1}(x)) {\bf u})\\[7pt]
&\quad =-\alpha {\bf D}_q{\bf S}_q\big(\phi {\bf u} \big)=-(2\alpha ^2-1){\bf S}_q{\bf D}_q\big(\phi {\bf u} \big) -\texttt{U}_1{\bf D}_q^2\big(\phi {\bf u} \big)\\[7pt]
&\quad =(2\alpha ^2-1){\bf S}_q\big(Q_s(x){\bf u} \big) +\texttt{U}_1{\bf D}_q\big(Q_s(x){\bf u} \big)=\alpha ^2{\bf S}_q\big(Q_s(x){\bf u} \big) +\alpha {\bf D}_q\big(\texttt{U}_1Q_s(x){\bf u} \big).
\end{align*}
Therefore,
\begin{align}\label{320-f}
{\bf D}_q\big((Q_{s+1}(x)-(\alpha x-B_0)Q_s(x)){\bf u}\big)={\bf S}_q (Q_s(x){\bf u}),
\end{align}
and $\mathbf{u}$ is classical or semiclassical of class at most $s-1$. Let us now rewrite \eqref{320-f} in the form \eqref{eq-class-of-u-AW-case} to distinguish between cases. Using \eqref{def-Dx-fu} and \eqref{def-Sx-fu}, \eqref{320-f} becomes
\begin{align*}
&(\alpha \mathcal{S}_qR_{s+1} -\texttt{U}_1\mathcal{D}_qR_{s+1}+\big(\texttt{U}_1 ^2-\alpha ^2\texttt{U}_2\big)\mathcal{D}_qQ_s){\bf D}_q{\bf u}\\[7pt]
&\quad = \big(\texttt{U}_1\mathcal{D}_qQ_s+\alpha \mathcal{S}_qQ_s-\mathcal{D}_qR_{s+1} \big){\bf S}_q{\bf u},
\end{align*}
where $R_{s+1}(x)=Q_{s+1}(x)-(\alpha x-B_0)Q_s(x)$. Consequently, \eqref{eq-class-of-u-AW-case} follows with
\begin{align}
\label{Phi}\Phi&=\alpha \mathcal{S}_qR_{s+1} -\texttt{U}_1\mathcal{D}_qR_{s+1}+(\texttt{U}_1 ^2 -\alpha ^2\texttt{U}_2 )\mathcal{D}_qQ_s,\\[7pt]
\label{Psi}\Psi&=\texttt{U}_1\mathcal{D}_qQ_s+\alpha \mathcal{S}_qQ_s-\mathcal{D}_qR_{s+1}.
\end{align}
Of course, $\Phi(x)\not =0$ and $\Psi(x)\not=0$, otherwise ${\bf u}={\bf 0}$, which contradicts the regularity of $\mathbf{u}$.  Without restriction of generality, let us assume $\Phi(x)=(x -1)\Psi(x)$. From \eqref{eq-class-of-u-AW-case}, and using \eqref{def-fD_x-u} and \eqref{def-fS_x-u}, we get
\begin{align*}
{\bf D}_q\big(1/2(\alpha\,x-1){\bf u} \big)={\bf S}_q{\bf u}.
\end{align*}
By Theorem \ref{regAW}, this leads to a contradiction with the regularity of $\mathbf{u}$ ---$a=d=0$ in the notation of Theorem \ref{regAW} and so $d_n=0$ therein---, and the first part of the theorem follows. Now suppose that $\Phi=\rho_r \phi$ and $ \Psi=\rho_r\psi$, $r< s$ where $\rho_r \in \mathcal{P}_r$, $\phi \in \mathcal{P}_{s-r+1}$ and $\psi\in \mathcal{P}_{s-r}$. Hence \eqref{eq-class-of-u-AW-case} reduces to $\phi \, {\bf D}_q{\bf u}=\psi \, {\bf S}_q {\bf u}$ and, therefore, using \eqref{def-fD_x-u} and \eqref{def-fS_x-u}, we have
$$
{\bf D}_q\left((\mathcal{S}_q\phi +\texttt{U}_2 \mathcal{D}_q\psi)\,{\bf u} \right)= {\bf S}_q\left((\mathcal{S}_q\psi+\mathcal{D}_q\phi)\,{\bf u}\right).
$$
Since  $\mathcal{S}_q\phi$ has degree at most $s-r+1$, and $\mathcal{S}_q\psi$ and $\mathcal{D}_q\phi$ have degree at most $s-r$, $\mathbf{u}$ is classical whenever $r=s-1$ or semiclassical of class at most $s-r-1$ whenever $r<s-1$. Assume that \eqref{eq-class-of-u-AW-case} holds with $\Phi$ and $\Psi$ being coprime, i.e., $r=0$. To obtain a contradiction, suppose that ${\bf u}$ is semiclassical of class at most $s-2$: there exists $\phi\in \mathcal{P}_{s}$ and $\psi \in \mathcal{P}_{s-1}$ such that \eqref{NUL-Pearson} holds. Taking into account \eqref{def-Dx-fu} and \eqref{def-Sx-fu}, \eqref{NUL-Pearson} holds if and only if 
\begin{align}\label{aux-f}
\widetilde{\Phi} \,{\bf D}_q{\bf u}=\widetilde{\Psi}\,{\bf S}_q {\bf u},
\end{align}
where $\widetilde{\Phi} =\alpha \mathcal{S}_q\phi -\texttt{U}_1\mathcal{D}_q\phi+(\texttt{U}_1 ^2 -\alpha^2 \texttt{U}_2)\mathcal{D}_q\psi$ and $\widetilde{\Psi}=\alpha \mathcal{S}_q\psi+\texttt{U}_1\mathcal{D}_q\psi-\mathcal{D}_q\phi$. Combining \eqref{eq-class-of-u-AW-case} with \eqref{aux-f} yields
 $$
 (\widetilde{\Psi}\, \Phi-\widetilde{\Phi}\, \Psi)\, {\bf D}_q{\bf u}=\mathbf{0}.
 $$
By the regularity of $\mathbf{u}$, and the fact that  $\Phi$ and $\Psi$ are coprime, $\Phi=a \widetilde{\Phi} $ and $\Psi=a \widetilde{\Psi}$ $(a\in \mathbb{C}\setminus\{0\})$, and so
 $$
Q_s=a\, \psi,
 $$
 which is impossible. Thus ${\bf u}$ is semiclassical of class $s-1$. The same conclusion can be drawn for $r\not=0$ and the theorem is proved.

\end{proof}

\section{Counterexamples}\label{Sec4}

As an application of a particular case of Theorem \ref{theo-2b}, we disprove Conjecture \ref{conj1} and Conjecture \ref{conj2}.

\begin{proposition}\label{Askey1}
Let $(P_n)_{n\geq 0}$  be a sequence of orthogonal polynomials satisfying \eqref{TTRR} with
\begin{align*}
B_n=0, \qquad C_{n}= \frac{1}{4}\big(1-(-1)^{n}q^{n/2}  \big)\big(1-(-1)^{n}q^{(n-1)/2}  \big).
\end{align*}
Then $(P_n)_{n\geq 0}$ is $\mathcal{D}_{q}$-semiclassical of class two and the corresponding linear form satisfies \eqref{NUL-Pearson} with
\begin{align*}
\phi(x)&=-\frac{1}{8}(1-q^{-1})^2\big(4x^4-(q+5)x^2+q+1),\\[7pt]
\psi(x)&=\frac{1}{4}(q-1)q^{-3/2}x(4x^2-3-q).
\end{align*}
\end{proposition}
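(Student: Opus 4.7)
The plan is to verify the Pearson equation $\mathbf{D}_q(\phi \mathbf{u}) = \mathbf{S}_q(\psi \mathbf{u})$ directly for the proposed $\phi$ and $\psi$, and then to pin down the class.

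Since $B_n=0$, the linear form $\mathbf{u}$ is symmetric (odd moments vanish) and $P_n(-x)=(-1)^n P_n(x)$. The proposed $\phi$ is even, $\psi$ is odd, and from the definitions $\mathcal{D}_q$ reverses parity while $\mathcal{S}_q$ preserves it. Hence testing the Pearson equation against $P_k$, i.e., checking
\begin{equation*}
\bigl\langle \mathbf{u},\, \phi \,\mathcal{D}_q P_k + \psi \,\mathcal{S}_q P_k \bigr\rangle = 0,
\end{equation*}
is automatic for even $k$. For odd $k$, I would expand $\phi \,\mathcal{D}_q P_k + \psi \,\mathcal{S}_q P_k$ in the basis $(P_j)_{j\ge 0}$ by iterating the recurrence \eqref{TTRR} together with the product formulas \eqref{def-Dx-fg}--\eqref{def-Sx-fg}; by orthogonality of $(P_n)_{n\ge 0}$, only the coefficient of $P_0$ contributes, so verification reduces to a finite family of polynomial identities in $q$ involving the explicit $C_n$. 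The factorised form of $C_n$ together with the simple $q^{\pm n/2}$-structure of $\alpha_n$ and $\gamma_n$ should make these collapse cleanly, and they can be propagated to all odd $k$ by induction through the recurrence.

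With the Pearson equation in hand, $(\phi,\psi)$ yields $\max(\deg\phi-2,\deg\psi-1)=2$, so the class of $\mathbf{u}$ is at most $2$. For the reverse inequality, the classical case is excluded by Proposition \ref{Prop} (the alternating $(-1)^n$-dependence of $C_n$ is incompatible with the recurrence coefficients of any Askey-Wilson family satisfying $B_n=0$), while class $1$ is excluded using the fact that the minimal pair in $\mathcal{P}_{\mathbf{u}}$ is unique up to a constant: the zeros of $\psi$, namely $x=0$ and $x=\pm\tfrac{1}{2}\sqrt{3+q}$, are not zeros of $\phi$, so no reduction by a common factor can produce a pair of smaller degrees satisfying the Pearson equation.

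The main obstacle is the bookkeeping in the direct verification. A possibly cleaner alternative is to first establish a structure relation of the form \eqref{2-f} with $s=3$ (using the recurrence \eqref{TTRR} and Lemma \ref{lema}, exploiting the parity split to kill half the coefficients), and then apply Theorem \ref{theo-2b}: this produces the Pearson pair via \eqref{Phi}--\eqref{Psi} as a by-product and automatically certifies the class provided the resulting $\Phi,\Psi$ turn out to be coprime.
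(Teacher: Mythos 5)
The decisive gap is in your determination of the class. From \eqref{NUL-Pearson} with $\deg\phi=4$ and $\deg\psi=3$ you only obtain class $\le 2$, and your argument for the lower bound --- that the zeros of $\psi$ are not zeros of $\phi$, together with uniqueness of the minimal pair --- does not work. Uniqueness of the pair at which the class is attained says nothing about whether the pair you exhibit is that pair, and in this setting a lower-class pair need not arise by cancelling a literal common polynomial factor of $(\phi,\psi)$: because of the $\mathbf{S}_q$ on the right-hand side of \eqref{NUL-Pearson}, pairs of different degrees are related through the operator identities \eqref{def-Dx-fu}--\eqref{def-fS_x-u}, not by plain division (already for the standard derivative the reduction criterion at a zero $c$ of $\phi$ involves $\psi(c)+\phi'(c)$ and a moment of $\mathbf{u}$, not the zeros of $\psi$). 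This is exactly why the paper routes the argument through Theorem \ref{theo-2b}: there the class is read off from the coprimality of the auxiliary polynomials $\Phi,\Psi$ of \eqref{Phi}--\eqref{Psi} in the equation $\Phi\,\mathbf{D}_q\mathbf{u}=\Psi\,\mathbf{S}_q\mathbf{u}$ --- a different pair from $(\phi,\psi)$ --- and the \emph{argumentum ad absurdum} comparing with a hypothetical lower-class pair via \eqref{aux-f} is precisely the step your proposal lacks. Your parenthetical exclusion of the classical case (``incompatible with any Askey--Wilson family with $B_n=0$'') is likewise an unproved assertion, though it becomes superfluous once class exactly two is established.

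Your direct verification of \eqref{NUL-Pearson} also stops where the real work begins: the parity argument does dispose of even $k$, but for odd $k$ the ``finite family of identities propagated by induction'' is the whole content, and carrying it out essentially requires the expansions of $\mathcal{S}_qP_n$ and $\texttt{U}_2\mathcal{D}_qP_n$ in the basis $(P_j)_{j\ge0}$, which is what the paper proves by induction ($\mathcal{S}_qP_n=\alpha_nP_n+b_nC_{n-1}P_{n-2}$ and \eqref{true-eq-2}). Your closing ``alternative'' is in fact the paper's proof: establish \eqref{2-f} with $\phi=\texttt{U}_2$ and $s=3$, then apply Theorem \ref{theo-2b}. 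But as written it is a one-sentence plan: you would still have to prove the structure relation, verify the hypothesis $a_{n,n-3}\neq0$ for all $n\ge3$ (in the paper $a_{n,n-3}=d_n=(q-1)q^{-n/2}C_nC_{n-1}C_{n-2}\neq0$), and compute $\Phi,\Psi$ explicitly to confirm coprimality; the pair $(\phi,\psi)$ of the statement then drops out of \eqref{320-f} via $Q_3$ and $R_4$. So the proposal points at the right strategy but leaves both the verification of the Pearson equation and, more seriously, the lower bound on the class unjustified.
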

\begin{proof}
We claim that $(P_n)_{n\geq 0}$ satisfies
\begin{align}
\label{true-eq-1-prime} \mathcal{S}_q P_n&=\alpha_n P_n+b_nC_{n-1} P_{n-2}, \\[7pt]
\texttt{U}_2 \mathcal{D}_qP_n&=a_n P_{n+1}+c_n P_{n-1}+d_n P_{n-3},\label{true-eq-2}
\end{align}
with 
\begin{align*}
a_n&= (\alpha ^2 -1)\gamma_n,\\[7pt]
b_n&=-\frac{1}{2}\big(1-(-1)^nq^{n/2}  \big)\big((-1)^n-q^{-(n-1)/2} \big),\\[7pt]
c_n&=b_{n+1}C_n -\alpha b_nC_{n-1}-(\alpha ^2 -1)\gamma_nC_n,\qquad d_n=(b_{n-1}C_n -\alpha b_nC_{n-1})C_{n-2}.
\end{align*}
Indeed, we prove this by induction on $n$. For $n=1$, RHS of \eqref{true-eq-1-prime} gives 
$\alpha_1P_1(x) +b_1C_0P_{-1}(x)=\alpha x,$ while LHS gives $\mathcal{S}_qP_1(x)=\mathcal{S}_qx=\alpha x$.   Similarly, for $n=1$, LHS of \eqref{true-eq-2} gives 
$\texttt{U}_2 \mathcal{D}_qP_1=\texttt{U}_2$, while RHS gives 
\begin{align*}
a_1 P_2(x)+c_1P_0(x)+d_1P_{-1}(x)&=a_1P_2(x)+c_1\\[7pt]
&=(\alpha^2-1) (x^2 -C_1) +b_2C_1 -\alpha b_1C_0 -(\alpha^2-1)C_1\\[7pt]
&=(\alpha ^2 -1)(x^2-1).
\end{align*}
 Assuming \eqref{true-eq-1-prime} and \eqref{true-eq-2} hold, with $k$ instead of $n$, for $k=1, 2, \dots, n$, we will prove it for $k=n+1$. Apply $\mathcal{S}_q$ to \eqref{TTRR}, and use \eqref{def-Sx-fg},  to obtain
 \begin{align*}
\mathcal{S}_q(P_{n+1}(x) +C_n P_{n-1}(x))&= \mathcal{S}_q(xP_n(x) )=\texttt{U}_2(x)\mathcal{D}_qP_n(x) +\alpha x\mathcal{S}_qP_n(x).
\end{align*}
From \eqref{true-eq-2} for $n$ and \eqref{true-eq-1-prime} for $n-1$ and $n$, we get
\begin{align}\label{intermediate-1-q}
\mathcal{S}_q P_{n+1}(x)&=a_nP_{n+1}(x)+c_nP_{n-1}(x)+d_nP_{n-3}(x) \\[7pt]
&\quad +\alpha x (\alpha_n P_{n}(x)+b_nC_{n-1}P_{n-2}(x)) \nonumber\\[7pt]
&\quad -C_n(\alpha_{n-1} P_{n-1}(x)+b_{n-1}C_{n-2}P_{n-3}(x)).\nonumber
\end{align}
Now using \eqref{TTRR},  \eqref{intermediate-1-q} becomes
\begin{align*}
\mathcal{S}_qP_{n+1}&=(a_n +\alpha \alpha_n)P_{n+1}\\[7pt]
& +(c_n +\alpha \alpha_nC_n+\alpha b_nC_{n-1}-\alpha_{n-1}C_n)P_{n-1}\\[7pt]
& +(d_n+\alpha b_nC_{n-1}C_{n-2}-b_{n-1}C_nC_{n-2})P_{n-3}.
\end{align*}
The reader should convince himself that the following relations hold:
\begin{align*}
\alpha_{n+1}&=a_n +\alpha \alpha_n,\\[7pt]
b_{n+1}C_{n+1}&=c_n +\alpha \alpha_nC_n+\alpha b_nC_{n-1}-\alpha_{n-1}C_n,  \\[7pt]
0&=d_n+\alpha b_nC_{n-1}C_{n-2}-b_{n-1}C_nC_{n-2}.
\end{align*}
This gives $\mathcal{S}_qP_{n+1}=\alpha_{n+1}P_{n+1}+b_{n+1}C_{n}P_{n-1}$, 
and \eqref{true-eq-1-prime} holds for $n+1$. Similarly, apply $\texttt{U}_2 \mathcal{D}_q$ to \eqref{TTRR}, and use \eqref{def-Dx-fg},  to obtain
\begin{align*}
\texttt{U}_2(x) \mathcal{D}_q (P_{n+1}(x)+C_n P_{n-1}(x))=\texttt{U}_2(x) \mathcal{D}_q (x P_n(x))=\texttt{U}_2(x) (\mathcal{S}_q P_n(x)+\alpha x\mathcal{D}_q P_n(x))
\end{align*}
or, using \eqref{true-eq-1-prime} for $n$, 
\begin{align}
\texttt{U}_2(x) \mathcal{D}_qP_{n+1}(x)&=\texttt{U}_2(x) \mathcal{S}_q P_n(x)+\alpha x\texttt{U}_2(x) \mathcal{D}_q P_n(x)-C_n\texttt{U}_2(x) \mathcal{D}_q P_{n-1}(x)\label{Ip1}\\[7pt]
&=\texttt{U}_2(x)\big(\alpha_nP_n(x)+b_nC_{n-1}P_{n-2}(x)  \big)+\alpha x\texttt{U}_2(x) \mathcal{D}_q P_n(x)\nonumber\\[7pt]
&-C_n\texttt{U}_2(x) \mathcal{D}_q P_{n-1}(x).\nonumber
\end{align}
From \eqref{TTRR} it follows that
\begin{align*}
\texttt{U}_2P_n=(\alpha ^2-1)(P_{n+2}+(C_{n+1}+C_n-1)P_{n}+C_nC_{n-1}P_{n-2}).
\end{align*}
Combining the above equation with \eqref{true-eq-2} for $n-1$ and $n$, \eqref{Ip1} becomes
\begin{align*}
\texttt{U}_2 \mathcal{D}_q P_{n+1}&=\big((\alpha ^2-1)\alpha_n +\alpha a_n\big)P_{n+2}\\[7pt]
&\quad+\big( (\alpha ^2-1)\alpha_n (C_n+C_{n+1}-1) +(\alpha ^2-1)b_nC_{n-1}+\alpha a_nC_{n+1} \\[7pt]
&\quad\quad+\alpha c_n -a_{n-1}C_n \big)P_{n}\\[7pt]
&\quad+\big((\alpha ^2-1)\alpha_nC_nC_{n-1}+(\alpha ^2-1)b_nC_{n-1}(C_{n-1}+C_{n-2}-1)\\[7pt]
&\quad\quad +\alpha c_nC_{n-1}+\alpha d_n-c_{n-1}C_n  \big)P_{n-2}\\[7pt]
&\quad+\big(\alpha d_nC_{n-3}-d_{n-1}C_n+(\alpha ^2-1)b_nC_{n-1}C_{n-2}C_{n-3} \big)P_{n-4}.
\end{align*}
The reader again should convince himself that the following relations hold:
\begin{align*}
a_{n+1}&=(\alpha ^2-1)\alpha_n +\alpha a_n,\\[7pt]
c_{n+1}&=(\alpha ^2-1)\alpha_n (C_n+C_{n+1}-1) +(\alpha ^2-1)b_nC_{n-1}+\alpha a_nC_{n+1} \\[7pt]
&\quad+\alpha c_n -a_{n-1}C_n,\\[7pt]
d_{n+1}&=(\alpha ^2-1)\alpha_nC_nC_{n-1}+(\alpha ^2-1)b_nC_{n-1}(C_{n-1}+C_{n-2}-1)\\[7pt]
&\quad +\alpha c_nC_{n-1}+\alpha d_n-c_{n-1}C_n, \\[7pt]
0&=\alpha d_nC_{n-3}-d_{n-1}C_n+(\alpha ^2-1)b_nC_{n-1}C_{n-2}C_{n-3}.
\end{align*}
We thus get $$\texttt{U}_2\mathcal{D}_q P_{n+1}=a_{n+1}P_{n+2}+c_{n+1}P_{n}+d_{n+1}P_{n-2},$$ as claimed. 
Observe from \eqref{true-eq-2} that $(P_n)_{n\geq 0}$ satisfies the hypotheses of Theorem \ref{theo-2b} with $B_n=0$ and $\phi=\texttt{U}_2$. Note also that
\begin{align*}
C_1&=\frac12(1+q^{1/2}),\qquad &C_2&=\frac14(1-q)(1-q^{1/2}),\qquad \\
 C_3&=\frac14(1+q)(1+q^{3/2}),\qquad &C_4&=\frac14(1-q^2)(1-q^{3/2}).
\end{align*}
 Under the notation of Theorem \ref{theo-2b} and its proof, we get
\begin{align}
\label{Q3} Q_3(x) &= \frac{c_1}{C_1}P_1(x) +\frac{d_3}{C_3C_2C_1}P_3(x)=\frac{1}{4}(q-1)q^{-3/2}x(4x^2-3-q),\\[7pt]
\nonumber Q_4(x) &=\frac{c_2}{C_2} P_2(x) + \frac{d_4}{C_4C_3C_2}P_4(x)= \frac{1}{8}(q-1)q^{-2}(8x^4-8x^2+1-q^2),\\[7pt]
\label{R4}R_4(x)&=Q_4(x) -\alpha x Q_3(x)=-\frac{1}{8}(1-q^{-1})^2(4x^4-(q+5)x^2+q+1).
\end{align}
Taking into account that $\mathcal{S}_qx=\alpha x$, $\mathcal{D}_q x^2 =2\alpha x$,
\begin{align*}
\mathcal{S}_q x^2&= (2\alpha ^2 -1)x^2 +1-\alpha ^2,\\[7pt]
\mathcal{D}_q x^3&= (4\alpha ^2 -1)x^2+1-\alpha ^2, \quad \mathcal{S}_q x^3 = \alpha (4\alpha ^2-3)x^3 +3\alpha (1-\alpha ^2)x,\\[7pt]
\mathcal{D}_q x^4&= 4\alpha(2\alpha^2-1)x^3+4\alpha(1-\alpha^2)x,\\[7pt]
\mathcal{S}_q x^4&=(8\alpha^4-8\alpha^2+1)x^4+2(1-\alpha^2)(4\alpha^2-1)x^2+(1-\alpha^2)^2  ,
\end{align*}
from \eqref{Phi} and \eqref{Psi}, we have
\begin{align*}
\Phi(x)&=-\frac{1}{16}(q-1)^2q^{-3/2}(8 q x^4 -2(q^2+4q+1)x^2 +(q+1)^2),\\[7pt]
\Psi(x)&=\frac{1}{4}(q^{1/2}-q^{-1/2})(4 q x^2-3q-1)x.
\end{align*}
Finally, since $\Phi$ and $\Psi$ are coprime, by Theorem \ref{theo-2b}, \eqref{320-f}, \eqref{Q3} and \eqref{R4},  the result follows.
\end{proof}

\begin{obs}
The  structure relation \eqref{true-eq-2} is of type \eqref{0.2Dq-general} with $\phi=\texttt{U}_2$, $M=3$, and $N=1$. Consequently, the semiclassical orthogonal polynomials given in Proposition \ref{Askey1} disprove Conjecture \ref{conj1}.
\end{obs}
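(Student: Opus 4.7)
The plan is to dispatch the two claims in the remark in turn. For the structure-relation claim, I would simply line up indices and coefficients in \eqref{true-eq-2}, namely
\begin{align*}
\texttt{U}_2 \, \mathcal{D}_q P_n = a_n P_{n+1} + c_n P_{n-1} + d_n P_{n-3},
\end{align*}
against the template \eqref{0.2Dq-general}. The match is $\phi = \texttt{U}_2 \in \mathcal{P}_2$ (independent of $n$), $N=1$, $M=3$, with coefficients $a_{n,1}=a_n$, $a_{n,-1}=c_n$, $a_{n,-3}=d_n$, and the remaining $a_{n,j}$ (for $j=0$ and $j=-2$) equal to zero. I would also record two sanity checks: first, that $C_n\neq 0$ for every $n\geq 1$ (both factors $1-(-1)^n q^{n/2}$ and $1-(-1)^n q^{(n-1)/2}$ are nonzero for $0<q<1$), so that $(P_n)_{n\geq 0}$ is genuinely an OP sequence; second, that $d_n\neq 0$ for infinitely many $n$ (this is immediate from the closed form of $b_n$ and $C_n$), so the relation is genuinely of type $(M,N)=(3,1)$ rather than some shorter one in disguise.

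For the disproof of Conjecture \ref{conj1}, my approach is by contradiction. Suppose the conjecture is true. Then the sequence $(P_n)_{n\geq 0}$ from Proposition \ref{Askey1}, being a sequence of OP satisfying \eqref{0.2Dq-general} with the above $\phi$, $M$, $N$, would have to be a multiple of the continuous $q$-Jacobi polynomials, of the Al-Salam--Chihara polynomials, or of a special or limiting case of one of these families. By Proposition \ref{Prop}, every such sequence is $\mathcal{D}_q$-classical, hence its linear form is classical in the sense of Definition \ref{NUL-def} and in particular semiclassical of class zero. However, Proposition \ref{Askey1} explicitly exhibits the linear form of $(P_n)_{n\geq 0}$ as $\mathcal{D}_q$-semiclassical of class exactly two, with the displayed admissible pair $(\phi,\psi)$ of minimal degrees realizing that class. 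Class two is incompatible with class zero, giving the required contradiction.

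The only subtlety is the ``limiting cases'' wording in Conjecture \ref{conj1}. I would dispose of it by pointing out that Proposition \ref{Prop} is itself phrased to include all Askey--Wilson polynomials \emph{and} their degenerations as one or more parameters tend to $\infty$, so the entire family predicted by the conjecture is confined to the $\mathcal{D}_q$-classical (class-zero) world. Nothing in the conjectural list can therefore accommodate a class-two linear form, so the contradiction is airtight and the remark is established.
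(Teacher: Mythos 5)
Your proposal is correct and follows essentially the same route as the paper: identify \eqref{true-eq-2} as an instance of \eqref{0.2Dq-general} with $\phi=\texttt{U}_2$, $M=3$, $N=1$, and then invoke Proposition \ref{Askey1} (class two, hence not $\mathcal{D}_q$-classical) together with Proposition \ref{Prop} to rule out every family in the conjecture's list. Your added sanity checks (that $C_n\neq 0$ so the sequence is genuinely orthogonal, and that $d_n\neq 0$ so the relation does not collapse to a shorter one) are consistent with the paper, which computes $d_n=(q-1)q^{-n/2}C_nC_{n-1}C_{n-2}\neq0$ in the proof of Corollary \ref{corollary}.
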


\begin{coro}\label{corollary}
Assume the hypotheses and notation of Proposition \ref{Askey1}. Then $(P_n)_{n\geq 0}$ satisfies
\begin{align}
\label{eq-main-eq} (\alpha ^2-1)^2(x^2 -\alpha ^2)(1-x^2) \mathcal{D}_q^2P_n(x) &=-(\alpha ^2-1)^2\gamma_n\gamma_{n-1} P_{n+2}(x) +d_{n,1}P_{n}(x)\\[7pt]
\nonumber &+d_{n,2}P_{n-2}(x) +d_{n,3}P_{n-4}(x)+d_{n,4}P_{n-6}(x),
\end{align}
with
\begin{align*}
d_{n,1}&=a_nc_{n+1}+a_{n-1}c_n-2\alpha(\alpha ^2-1)(\alpha_n ^2-1)(C_{n+1}+C_n-1)\\[7pt]
&\quad -4\alpha^2(\alpha^2-1)\alpha_{n-1}b_nC_{n-1}  ,\\[7pt]
d_{n,2}&=a_nd_{n+1}+c_nc_{n-1}+a_{n-3}d_n-2\alpha(\alpha ^2-1)(\alpha_n ^2-1)C_nC_{n-1}\\[7pt]
&\quad-4\alpha ^2(\alpha ^2-1)\alpha_{n-1}b_nC_{n-1}(C_{n-1}+C_{n-2}-1)-2\alpha (\alpha ^2-1)b_nb_{n-2}C_{n-1}C_{n-3} ,\\[7pt]
d_{n,3}&=c_nd_{n-1}+c_{n-3}d_n-4\alpha ^2(\alpha^2-1)\alpha_{n-1}b_nC_{n-1}C_{n-2}C_{n-3}\\[7pt]
&\quad -2\alpha(\alpha ^2-1)b_nb_{n-2}C_{n-1}C_{n-3}(C_{n-3}+C_{n-4}-1)  ,\\[7pt]
d_{n,4}&= -4\alpha ^2q^{-(n-3)/2}C_nC_{n-1}C_{n-2}C_{n-3}C_{n-4}C_{n-5}  .
\end{align*}
\end{coro}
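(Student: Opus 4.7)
The plan is to derive the stated second-order difference-recurrence by applying the operator $\texttt{U}_2\mathcal{D}_q$ to the structure relation \eqref{true-eq-2} and then converting the resulting $\mathcal{S}_q\mathcal{D}_qP_n$ term into $\mathcal{D}_q^2P_n$ by means of a commutation identity. First I would establish the polynomial identity
\begin{align}\label{comm-plan}
\mathcal{D}_q\mathcal{S}_q f=\alpha\,\mathcal{S}_q\mathcal{D}_q f+\texttt{U}_1\,\mathcal{D}_q^2 f\qquad(f\in\mathcal{P}),
\end{align}
which is the polynomial counterpart of \eqref{DxnSx-u} with $n=1$ (obtained by transposition, using $\alpha^2-1-\alpha_2=-\alpha^2$) and can also be checked directly from the action of $\mathcal{D}_q$ and $\mathcal{S}_q$ on the lattice $x(s)=(q^s+q^{-s})/2$.

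Next, I would apply \eqref{def-Dx-fg} with $f=\texttt{U}_2$ and $g=\mathcal{D}_qP_n$, multiply both sides by $\texttt{U}_2$, and substitute $\mathcal{S}_q\mathcal{D}_qP_n$ via \eqref{comm-plan}. Using the elementary computations $\mathcal{D}_q\texttt{U}_2=2\alpha\,\texttt{U}_1$ and $\mathcal{S}_q\texttt{U}_2-\alpha^{-1}\texttt{U}_1\mathcal{D}_q\texttt{U}_2=(\alpha^2-1)(x^2-\alpha^2)$, this produces the master identity
\begin{align}\label{master-plan}
(\alpha^2-1)^2(x^2-\alpha^2)(1-x^2)\,\mathcal{D}_q^2 P_n=2\,\texttt{U}_1\texttt{U}_2\,\mathcal{D}_q\mathcal{S}_q P_n-\texttt{U}_2\,\mathcal{D}_q\bigl(\texttt{U}_2\,\mathcal{D}_qP_n\bigr),
\end{align}
and the task reduces to expanding the right-hand side in the basis $(P_j)_{j\geq 0}$.

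For the second summand, applying $\texttt{U}_2\mathcal{D}_q$ to \eqref{true-eq-2} and iterating yields
\begin{align*}
\texttt{U}_2\mathcal{D}_q(\texttt{U}_2\mathcal{D}_qP_n)&=a_na_{n+1}P_{n+2}+(a_nc_{n+1}+c_na_{n-1})P_n\\
&\quad+(a_nd_{n+1}+c_nc_{n-1}+d_na_{n-3})P_{n-2}\\
&\quad+(c_nd_{n-1}+d_nc_{n-3})P_{n-4}+d_nd_{n-3}\,P_{n-6}.
\end{align*}
For the first summand, \eqref{true-eq-1-prime} gives $\mathcal{D}_q\mathcal{S}_qP_n=\alpha_n\mathcal{D}_qP_n+b_nC_{n-1}\mathcal{D}_qP_{n-2}$, and the factor $\texttt{U}_1\texttt{U}_2=(\alpha^2-1)\,x\,\texttt{U}_2$ is carried in through \eqref{true-eq-2} combined with the three-term recurrence $xP_k=P_{k+1}+C_kP_{k-1}$, which is available since $B_n=0$. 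Matching coefficients of $P_{n+2},P_n,P_{n-2},P_{n-4},P_{n-6}$ on the right of \eqref{master-plan} then produces the claimed expansion: for $P_{n+2}$ one gets $2(\alpha^2-1)\alpha_na_n-a_na_{n+1}=(\alpha^2-1)^2\gamma_n(2\alpha_n-\gamma_{n+1})=-(\alpha^2-1)^2\gamma_n\gamma_{n-1}$ (using the telescoping identity $2\alpha_n-\gamma_{n+1}=-\gamma_{n-1}$), and the remaining four collapse into $d_{n,1},\ldots,d_{n,4}$ after substitution of the explicit expressions of $a_n,b_n,c_n,d_n$ from Proposition \ref{Askey1}.

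The main obstacle is the lengthy bookkeeping in this last step, and in particular the simplification of the coefficient of $P_{n-6}$: the raw combination $2(\alpha^2-1)b_nC_{n-1}d_{n-2}C_{n-5}-d_nd_{n-3}$ has to be collapsed into the compact product $-4\alpha^2q^{-(n-3)/2}\prod_{j=n-5}^{n}C_j$, which requires repeated use of the identity $d_k=(b_{k-1}C_k-\alpha b_kC_{k-1})C_{k-2}$ and of the explicit form of $b_k$ from the proof of Proposition \ref{Askey1}, together with identities of the type $\alpha_n^2-1=(\alpha^2-1)\gamma_n^2$ and $\gamma_{n+1}+\gamma_{n-1}=2\alpha\gamma_n$ to fold the mixed $\alpha_n$–$\gamma_n$ combinations back into the $\alpha_{n-1}(\alpha_n^2-1)$ shape appearing in $d_{n,1}$–$d_{n,3}$.
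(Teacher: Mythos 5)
Your derivation is sound in its two key identities --- I checked that $\mathcal{D}_q\mathcal{S}_qf=\alpha\,\mathcal{S}_q\mathcal{D}_qf+\texttt{U}_1\mathcal{D}_q^2f$ holds and that your master identity is exactly $(\texttt{U}_1^2-\alpha^2\texttt{U}_2)\texttt{U}_2\mathcal{D}_q^2P_n$ written out --- and the overall strategy coincides with the paper's: both proofs start from the relation obtained by applying $\texttt{U}_2\mathcal{D}_q$ to \eqref{true-eq-2} (the paper's \eqref{star-one}, your second summand) and then use \eqref{true-eq-1-prime} to dispose of the mixed term $\mathcal{S}_q\mathcal{D}_qP_n$. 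Where you genuinely differ is in the second ingredient: the paper applies $\mathcal{S}_q$ to \eqref{true-eq-1-prime} and invokes $\alpha\mathcal{S}_q^2P_n=\alpha^2\texttt{U}_2\mathcal{D}_q^2P_n+\alpha\texttt{U}_1\mathcal{S}_q\mathcal{D}_qP_n+\alpha P_n$ to get \eqref{star-two}, then eliminates $\mathcal{S}_q\mathcal{D}_qP_n$ by forming \eqref{star-one}$\,-\,2\alpha\texttt{U}_2\cdot$\eqref{star-two}, whereas you apply $\mathcal{D}_q$ to \eqref{true-eq-1-prime} and trade $\mathcal{S}_q\mathcal{D}_q$ for $\mathcal{D}_q\mathcal{S}_q$ via the commutation identity. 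Both are legitimate, but the paper's elimination buys something concrete: the right-hand side of \eqref{star-one}$\,-\,2\alpha\texttt{U}_2\cdot$\eqref{star-two} is \emph{literally} the displayed $d_{n,j}$ --- the factors $(C_{k+1}+C_k-1)$ and $C_kC_{k-1}$ come straight from expanding $\texttt{U}_2P_k$ through \eqref{TTRR} --- while your route, which pushes $2\texttt{U}_1\texttt{U}_2\mathcal{D}_q\mathcal{S}_qP_n$ through $xP_k=P_{k+1}+C_kP_{k-1}$, packages the coefficients differently (they must agree by uniqueness of the expansion in the basis $(P_j)_{j\geq0}$, and indeed your $P_{n+2}$ computation lands on $-(\alpha^2-1)^2\gamma_n\gamma_{n-1}$ correctly), so matching the \emph{printed} formulas for $d_{n,1},d_{n,2},d_{n,3}$ costs you an extra round of identity-chasing that the paper gets for free.

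One concrete warning about your final step. Your raw coefficient of $P_{n-6}$, namely $2(\alpha^2-1)b_nC_{n-1}d_{n-2}C_{n-5}-d_nd_{n-3}$, does agree with the paper's raw form $d_nd_{n-3}-2\alpha(\alpha^2-1)b_nb_{n-2}C_{n-1}C_{n-3}C_{n-4}C_{n-5}$; but substituting $b_k=2C_kq^{-(k-1)/2}$ and $d_k=(q-1)q^{-k/2}C_kC_{k-1}C_{k-2}$ collapses either form to $-4(\alpha^2-1)\,q^{-(2n-3)/2}\prod_{j=0}^{5}C_{n-j}$, not to the displayed $-4\alpha^2q^{-(n-3)/2}\prod_{j=0}^{5}C_{n-j}$: the two differ both in the prefactor and in the exponent, so the ``collapse'' you announce will not land on the printed expression (which appears to carry a slip). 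Since the only property of $d_{n,4}$ used afterwards is its nonvanishing, this does not compromise the corollary's role as a counterexample, but you should record the value you actually obtain rather than assert agreement with the printed one.
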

\begin{proof}
From the previous result, we apply $\texttt{U}_2\mathcal{D}_q$ to \eqref{true-eq-2}, and use \eqref{def-Dx-fg}, to get
\begin{align*}
&\texttt{U}_2\mathcal{S}_q\texttt{U}_2 \mathcal{D}_q ^2P_n +\texttt{U}_2\mathcal{D}_q\texttt{U}_2\mathcal{S}_q\mathcal{D}_qP_n=\texttt{U}_2 \mathcal{D}_q(a_n P_{n+1}+c_n P_{n-1}+d_n P_{n-3}),
\end{align*} 
and since $\mathcal{S}_q\texttt{U}_2=\alpha^2\texttt{U}_2+\texttt{U}_1 ^2$ and $\mathcal{D}_q\texttt{U}_2=2\alpha\texttt{U}_1$, we use again \eqref{true-eq-2} to obtain
\begin{align}\label{star-one}
(\alpha ^2 \texttt{U}_2 &+\texttt{U}_1 ^2)\texttt{U}_2 \mathcal{D}_q ^2P_n +2\alpha \texttt{U}_1\texttt{U}_2\mathcal{S}_q\mathcal{D}_qP_n=a_na_{n+1}P_{n+2}\\[7pt]
&+(a_nc_{n+1}+a_{n-1}c_n)P_n+(a_nd_{n+1}+c_nc_{n-1}+a_{n-3}d_n)P_{n-2}\nonumber\\[7pt]
&+(c_nd_{n-1}+d_nc_{n-3})P_{n-4}+d_nd_{n-3}P_{n-6}.\nonumber
\end{align} 
On the other hand, it is known from \cite[Lemma 2.1]{CMP22a} that 
\begin{align*}
\alpha \mathcal{S}_q ^2P_n&=\mathcal{S}_q(\texttt{U}_1\mathcal{D}_q P_n)+\texttt{U}_2 \mathcal{D}_q ^2P_n +\alpha P_n=\alpha ^2 \textbf{U}_2\mathcal{D}_q ^2P_n+\alpha \texttt{U}_1\mathcal{S}_q\mathcal{D}_qP_n+\alpha P_n,
\end{align*}
where the second equality holds thanks to \eqref{def-Sx-fg}. Now we apply $\mathcal{S}_q$ to \eqref{true-eq-1-prime} using again the same equation and the above equation in order to obtain
\begin{align}\label{star-two}
\alpha \texttt{U}_2(x)\mathcal{D}_q ^2P_n(x)+\texttt{U}_1(x)\mathcal{S}_q\mathcal{D}_qP_n(x)&=(\alpha_n ^2-1)P_n(x)+2\alpha \alpha_{n-1}b_{n}C_{n-1} P_{n-2}(x)\\[7pt]
&+b_nb_{n-2}C_{n-1}C_{n-3}P_{n-4}(x).\nonumber
\end{align} 
Therefore, \eqref{eq-main-eq} holds by combining \eqref{star-one} with \eqref{star-two} in order to eliminate $\mathcal{S}_q\mathcal{D}_qP_n$ and by using \eqref{TTRR}, with
\begin{align*}
d_{n,4}=d_nd_{n-3}-2\alpha (\alpha ^2-1)b_nb_{n-2}C_{n-1}C_{n-3}C_{n-4}C_{n-5}.
\end{align*}
In addition, $b_n=2C_nq^{-(n-1)/2}$ and $d_n=(q-1)q^{-n/2}C_nC_{n-1}C_{n-2}$. Therefore we obtain
\begin{align*}
d_{n,4}=-4\alpha ^2q^{-(n-3)/2}\prod_{j=0} ^5 C_{n-j} \neq 0 .
\end{align*}
The result is then proved.
\end{proof}

\begin{obs}
The  structure relation \eqref{eq-main-eq} is of type \eqref{0.2Dq-general} with $\phi(x)=(\alpha ^2-1)^2(x^2-\alpha^2)(1-x^2)$, $M=6$, and $N=2$. Consequently, the semiclassical orthogonal polynomials given in Proposition \ref{Askey1} disprove Conjecture \ref{conj2}.
\end{obs}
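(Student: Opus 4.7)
The plan is to derive the claimed second-order structure relation by combining the two first-order relations \eqref{true-eq-1-prime} and \eqref{true-eq-2} from Proposition \ref{Askey1}, eliminating the mixed term $\mathcal{S}_q\mathcal{D}_q P_n$. The key observation is that \eqref{true-eq-2} already gives $\texttt{U}_2\mathcal{D}_q P_n$ as a combination of four shifted $P$'s, and applying a second differentiation against $\texttt{U}_2$ will produce both $\mathcal{D}_q^2 P_n$ and $\mathcal{S}_q \mathcal{D}_q P_n$; a second, linearly independent, equation in these same unknowns can be manufactured from the ``average'' relation \eqref{true-eq-1-prime}. A pair of linear combinations then kills $\mathcal{S}_q\mathcal{D}_q P_n$.

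More precisely, I would first apply the Leibniz rule \eqref{def-Dx-fg} with $f=\texttt{U}_2$ and $g=\mathcal{D}_q P_n$, using $\mathcal{D}_q\texttt{U}_2=2\alpha\texttt{U}_1$ and $\mathcal{S}_q\texttt{U}_2=\alpha^2\texttt{U}_2+\texttt{U}_1^{\,2}$, to express $\texttt{U}_2\mathcal{D}_q(\texttt{U}_2\mathcal{D}_q P_n)$ in the form
\begin{align*}
(\alpha^2\texttt{U}_2+\texttt{U}_1^{\,2})\texttt{U}_2\mathcal{D}_q^{\,2}P_n+2\alpha\texttt{U}_1\texttt{U}_2\mathcal{S}_q\mathcal{D}_q P_n.
\end{align*}
On the right-hand side, I would differentiate $a_nP_{n+1}+c_nP_{n-1}+d_nP_{n-3}$ once more against $\texttt{U}_2$, reapplying \eqref{true-eq-2} with shifted indices. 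This yields the identity I will call (I): the RHS is explicitly $a_na_{n+1}P_{n+2}+(a_nc_{n+1}+a_{n-1}c_n)P_n+(a_nd_{n+1}+c_nc_{n-1}+a_{n-3}d_n)P_{n-2}+(c_nd_{n-1}+d_nc_{n-3})P_{n-4}+d_nd_{n-3}P_{n-6}$.

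Next, I would produce the second relation by starting from the consequence of \eqref{def-Sx-fg} recorded in \cite[Lemma 2.1]{CMP22a}, namely $\alpha\mathcal{S}_q^{\,2}=\alpha^2\texttt{U}_2\mathcal{D}_q^{\,2}+\alpha\texttt{U}_1\mathcal{S}_q\mathcal{D}_q+\alpha\,\mathrm{Id}$, and applying $\mathcal{S}_q$ to \eqref{true-eq-1-prime}. Iterating \eqref{true-eq-1-prime} to compute $\mathcal{S}_q^{\,2}P_n$ and using the trigonometric-type identity $\alpha_n+\alpha_{n-2}=2\alpha\alpha_{n-1}$, one gets the relation (II):
\begin{align*}
\alpha\texttt{U}_2\mathcal{D}_q^{\,2}P_n+\texttt{U}_1\mathcal{S}_q\mathcal{D}_q P_n=(\alpha_n^{\,2}-1)P_n+2\alpha\alpha_{n-1}b_nC_{n-1}P_{n-2}+b_nb_{n-2}C_{n-1}C_{n-3}P_{n-4}.
\end{align*}
Multiplying (II) by $2\alpha\texttt{U}_2$ and subtracting (I) eliminates $\texttt{U}_1\texttt{U}_2\mathcal{S}_q\mathcal{D}_q P_n$, leaving the operator coefficient $(2\alpha^2\texttt{U}_2-\alpha^2\texttt{U}_2-\texttt{U}_1^{\,2})\texttt{U}_2=(\alpha^2\texttt{U}_2-\texttt{U}_1^{\,2})\texttt{U}_2$, which a direct computation simplifies to $-(\alpha^2-1)^2(x^2-\alpha^2)(1-x^2)$. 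After changing the overall sign, the LHS of the target equation \eqref{eq-main-eq} appears.

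What remains is bookkeeping. Using \eqref{TTRR} in the form $\texttt{U}_2 P_k=(\alpha^2-1)\bigl(P_{k+2}+(C_{k+1}+C_k-1)P_k+C_kC_{k-1}P_{k-2}\bigr)$, I would expand $2\alpha\texttt{U}_2$ times the RHS of (II) into the basis $\{P_{n+2},P_n,P_{n-2},P_{n-4},P_{n-6}\}$ and subtract from (I); matching coefficients yields exactly $d_{n,1},d_{n,2},d_{n,3}$ and also $d_{n,4}=d_nd_{n-3}-2\alpha(\alpha^2-1)b_nb_{n-2}C_{n-1}C_{n-3}C_{n-4}C_{n-5}$. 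The leading coefficient on $P_{n+2}$ is $a_na_{n+1}-2\alpha(\alpha_n^{\,2}-1)(\alpha^2-1)=(\alpha^2-1)^2\gamma_n(\gamma_{n+1}-2\alpha\gamma_n)=-(\alpha^2-1)^2\gamma_n\gamma_{n-1}$, using the three-term recurrence $\gamma_{n+1}=2\alpha\gamma_n-\gamma_{n-1}$ satisfied by $\gamma_n$. Finally, plugging in the explicit identities $b_n=2C_nq^{-(n-1)/2}$ and $d_n=(q-1)q^{-n/2}C_nC_{n-1}C_{n-2}$ collapses $d_{n,4}$ to the compact form $-4\alpha^2q^{-(n-3)/2}\prod_{j=0}^{5}C_{n-j}$, which is nonzero thanks to the regularity $C_j\neq 0$.

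The main obstacle is not conceptual but computational: the $P_{n-2}$ and $P_{n-4}$ coefficients receive contributions from several sources, and verifying that the algebra produces the stated $d_{n,1},d_{n,2},d_{n,3}$ (in particular, showing that the cross terms involving $\alpha_{n-1}b_nC_{n-1}$ and $b_nb_{n-2}C_{n-1}C_{n-3}$ combine consistently with the products $a_\ast c_\ast$ and $c_\ast c_\ast$) is the delicate part. The simplification $d_{n,4}\neq 0$, however, is clean once the explicit formulas for $b_n$ and $d_n$ in terms of $C_n$ are substituted.
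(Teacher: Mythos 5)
Your derivation is essentially the paper's own proof of Corollary \ref{corollary}: you form the same two relations (its \eqref{star-one} and \eqref{star-two}) by applying $\texttt{U}_2\mathcal{D}_q$ to \eqref{true-eq-2} and $\mathcal{S}_q$ to \eqref{true-eq-1-prime} via $\alpha\mathcal{S}_q^{2}=\alpha^{2}\texttt{U}_2\mathcal{D}_q^{2}+\alpha\texttt{U}_1\mathcal{S}_q\mathcal{D}_q+\alpha$, eliminate $\mathcal{S}_q\mathcal{D}_qP_n$ with the same $2\alpha\texttt{U}_2$ combination, and conclude via \eqref{TTRR} and the substitutions $b_n=2C_nq^{-(n-1)/2}$, $d_n=(q-1)q^{-n/2}C_nC_{n-1}C_{n-2}$ that $d_{n,4}\neq 0$, so the relation is of the form in Conjecture \ref{conj2} with $\deg\phi=4$, $M=6$, $N=2$, while Proposition \ref{Askey1} shows the polynomials are semiclassical of class two rather than Askey--Wilson. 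This matches the paper's argument in both structure and detail (including the final compact expression for $d_{n,4}$, whose only role is its nonvanishing), so the proposal is correct and takes the same route.
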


\section{Further results: $D_{q, \omega}$-semiclassical orthogonal polynomials}\label{Sec6}
Although this paper was originally intended to deal with the Askey-Wilson operator, the ideas developed above allow working with other operators. The results of this section are related to the structure relation that appears in \cite[Conjecture 24.7.7]{I05}, and warn the reader of the existence of semiclassical OP in such a problem. Recall that given complex numbers $q$ and $\omega$,
Hahn's operator $D_{q,\omega}:\mathcal{P}\to\mathcal{P}$ is defined by
\begin{align*}
D_{q,\omega}f(x):=\frac{f(qx+\omega)-f(x)}{(q-1)x+\omega},
\end{align*}
where we have fixed $q$ and $\omega$ such that
\begin{align}\label{wq}
|1-q|+|\omega|\not=0, \qquad q\not\in \{0\}\cup\left\{e^{2ij\pi/n}\left|\right.1\leq j\leq n-1, n\in \mathbb{N}\setminus\{0,1\}\right\}.
\end{align}
For every ${\bf u} \in \mathcal{P^*}$ and $f\in \mathcal{P}$, $D_{q,\omega}$ induces
${\bf D}_{q,\omega}:\mathcal{P}^*\to\mathcal{P}^*$ defined by
\begin{align*}
\langle{\bf D}_{q,\omega}{\bf u},f\rangle=
-q^{-1}\langle{\bf u},D_{q,\omega}^*f\rangle,
\end{align*}
where $D_{q,\omega}^*=D_{1/q,-\omega/q}$.

\begin{definition}\label{NUL-def2}  \cite[p. 487]{ACMP2}
${\bf u}\in\mathcal{P}^*$ is called ${\bf D}_{q,\omega}$-classical if it is regular and there exist 
$\phi\in\mathcal{P}_2\setminus \mathcal{P}_{-1}$ and $\psi\in\mathcal{P}_1\setminus \mathcal{P}_{-1}$
such that
\begin{align}\label{NUL-Pearson2}
{\bf D}_{q,\omega} \left( \phi\, \mathbf{u}\right) =\psi\, \mathbf{u}.
\end{align}
$($We will call it simply classical when no confusion can arise.$)$
\end{definition}

\begin{definition}\label{semi2}\cite[p. 855]{ACMP2}
We call a linear form, in $\mathcal{P}^*$, ${\bf D}_{q, \omega} $-semiclassical if it is regular, not ${\bf D}_{q, \omega} $-classical, and there exist
two polynomials $\phi$ and $\psi$ with at least one of them nonzero, such that \eqref{NUL-Pearson2} holds. $($We will call it simply semiclassical when no confusion can arise.$)$
\end{definition}

OP with respect to a ($\mathbf{D}_{q, \omega}$-)semiclassical form of class
$s$ is called ($D_{q, \omega}$-) semiclassical of class $s$. Under the conditions of Definition \ref{semi2}, we define the class of ${\bf u}$ as in Section \ref{Sec2}. The next theorem is the analogue of Theorem \ref{regAW}.
Here we use the standard notation
\begin{align*}
[n]_q=\frac{q^n -1}{q-1}.
\end{align*}

\begin{theorem}\label{Dqw-main-Thm}\cite[Theorem 1.2]{ACMP1}
Suppose that ${\bf u} \in \mathcal{P}^*$ satisfies \eqref{NUL-Pearson2} with $\phi(x)=ax^2+bx+c$ and $\psi(x)=dx+e$.  Then ${\bf u}$ is regular if and only if 
\begin{align*}
d_n\neq0,\qquad \phi\left(-\frac{e_n}{d_{2n}}\right)\neq0,
\end{align*}
for all $n\in \mathbb{N}$, where $d_n=d\,q^n+a[n]_{q}$ and $e_n=eq^n+(\omega d_n+b)[n]_q$. Moreover, $(P_n)_{n\geq 0}$ satisfies \eqref{TTRR} with
\begin{align*}
& B_n=\omega[n]_q+\frac{[n]_qe_{n-1}}{d_{2n-2}}-\frac{[n+1]_qe_{n}}{d_{2n}},\\[7pt]
& C_{n+1}=-\frac{q^{n}[n+1]_qd_{n-1}}{d_{2n-1}d_{2n+1}}\, \phi\left(-\frac{e_n}{d_{2n}}\right). 
\end{align*}
\end{theorem}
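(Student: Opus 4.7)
The statement is the Hahn-operator counterpart of Theorem \ref{regAW}, so my plan is to transpose the argument that proves Theorem \ref{regAW} in \cite[Theorem 4.1]{CMP22a} to the $D_{q,\omega}$ setting, with the Askey--Wilson multipliers $(\alpha_n,\gamma_n)$ replaced by $(q^n,[n]_q)$. The first step is to translate \eqref{NUL-Pearson2} into a recurrence on the moments $u_n=\langle\mathbf{u},x^n\rangle$. Pairing \eqref{NUL-Pearson2} with $x^n$, using $\langle\mathbf{D}_{q,\omega}\mathbf{v},x^n\rangle=-q^{-1}\langle\mathbf{v},D_{q,\omega}^{*}x^n\rangle$ with $\mathbf{v}=\phi\mathbf{u}$, and expanding $D_{q,\omega}^{*}x^n$ (which has degree $n-1$ with leading coefficient a nonzero multiple of $[n]_q$) in powers of $x$, one obtains a three-term recurrence
\begin{align*}
d_n\,u_{n+1}+e_n\,u_n+f_n\,u_{n-1}=0 \qquad (n\in\mathbb{N}),
\end{align*}
with $f_n$ explicit. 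A leading- and subleading-coefficient calculation identifies the coefficient of $u_{n+1}$ as $d_n=dq^n+a[n]_q$ and that of $u_n$ as $e_n=eq^n+(\omega d_n+b)[n]_q$ (up to a common nonzero factor), exactly as in the statement.

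Next I would characterize regularity via Hankel determinants. Since $\mathbf{u}$ is regular iff $\Delta_n:=\det(u_{i+j})_{0\le i,j\le n}\ne 0$ for every $n\in\mathbb{N}$, the task reduces to evaluating $\Delta_n$ using the moment recurrence. Applying the recurrence row-by-row inside $\Delta_n$ and reorganizing in the same style as the proof of \cite[Theorem 4.1]{CMP22a}, I would extract a factorization
\begin{align*}
\frac{\Delta_n}{\Delta_{n-1}}=\kappa_n\,\phi\!\left(-\frac{e_n}{d_{2n}}\right),
\end{align*}
where $\kappa_n$ is a nonzero rational function of $d_1,\dots,d_{2n}$. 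Iterating then yields the claimed regularity criterion: $\Delta_n\ne 0$ for all $n$ iff $d_n\ne 0$ and $\phi(-e_n/d_{2n})\ne 0$ for all $n$.

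Finally, granted regularity, I would read off $B_n$ and $C_{n+1}$. Using $B_n=\langle\mathbf{u},xP_n^2\rangle/\langle\mathbf{u},P_n^2\rangle$ together with the determinantal formulas for $P_n$ in terms of the moments $u_k$, the expression for $B_n$ in the statement follows by identifying the subleading coefficient of $P_n$ via the recurrence. The identity $C_{n+1}=\Delta_{n+1}\Delta_{n-1}/\Delta_n^2$ combined with the factorization of $\Delta_n/\Delta_{n-1}$ obtained in the previous step then delivers the stated formula for $C_{n+1}$. An equivalent and cleaner route is to apply $\mathbf{D}_{q,\omega}$ to $P_n\mathbf{u}$ and $xP_n\mathbf{u}$, use \eqref{NUL-Pearson2} together with orthogonality, and match leading and subleading coefficients.

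The principal obstacle is the determinantal calculation in the second paragraph: although the moment recurrence is only three-term, the clean factorization $\Delta_n/\Delta_{n-1}=\kappa_n\phi(-e_n/d_{2n})$ is not at all evident, and requires the quadratic character of $\phi$ (together with careful control of the cross terms produced by the row reductions) to collapse into a single evaluation at the canonical point $-e_n/d_{2n}$. This is the step where the analogy with Theorem \ref{regAW} is tightest and carries the main technical content.
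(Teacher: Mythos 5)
First, note that the paper does not prove Theorem \ref{Dqw-main-Thm} at all: it is imported verbatim from \cite[Theorem 1.2]{ACMP1}, so there is no in-paper proof to compare against, and your sketch has to stand on its own. It does not, for two concrete reasons. The opening step is wrong as stated: for $\omega\neq 0$, $D_{q,\omega}^{*}x^{n}$ is a \emph{full} polynomial of degree $n-1$ (not a monomial), so pairing \eqref{NUL-Pearson2} with $x^{n}$ relates $u_{n+1}$ to \emph{all} lower moments $u_{0},\dots,u_{n}$, not to $u_{n},u_{n-1}$ only; the relation is three-term only after you first reduce to $\omega=0$ via the affine substitution $x\mapsto x+\omega/(1-q)$ (for $q\neq1$) or work in a basis adapted to Hahn's operator. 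Your identification of the leading and subleading coefficients $d_{n}$, $e_{n}$ survives this correction, but the recurrence you build the rest of the argument on does not exist in the monomial basis.

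Second, and more seriously, the step that you yourself flag as carrying ``the main technical content'' --- the Hankel-determinant factorization --- is both misstated and unproved. Since $\Delta_{n}/\Delta_{n-1}=\langle\mathbf{u},P_{n}^{2}\rangle=u_{0}\,C_{1}\cdots C_{n}$, that ratio is a \emph{product} of evaluations of $\phi$, not $\kappa_{n}\,\phi(-e_{n}/d_{2n})$; the single evaluation can only appear in $\Delta_{n+1}\Delta_{n-1}/\Delta_{n}^{2}=C_{n+1}$. Moreover, even granting a corrected factorization, the stated criterion requires showing that $d_{n}\neq0$ is \emph{necessary} for regularity (the admissibility part), and this does not follow from a formula whose prefactor $\kappa_{n}$ is merely ``a nonzero rational function of the $d_{k}$'': if some $d_{k}=0$ the expressions $\phi(-e_{n}/d_{2n})$, $\kappa_{n}$ are not even defined, so the equivalence cannot be read off. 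In short, the crux of both implications is asserted rather than proved. The ``cleaner route'' you mention in passing --- pairing the functional equation with $P_{n}\mathbf{u}$ and $xP_{n}\mathbf{u}$, extracting $B_{n}$ and $C_{n+1}$ from \eqref{TTRR}, and handling sufficiency by constructing the sequence from the stated recurrence coefficients, invoking Favard, and showing that \eqref{NUL-Pearson2} determines $\mathbf{u}$ up to a constant --- is the approach actually used for results of this type (compare Theorem \ref{regAW} and \cite{CMP22a,ACMP1}), and is what you would need to carry out in full for a complete proof.
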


In this context we have also an analogue to Theorem \ref{theo-2b} for semiclassical orthogonal polynomials of class one.

\begin{theorem}\label{theo-22b}
Let ${\bf u} \in \mathcal{P}'$ be regular and let $(P_n)_{n\geq 0}$  denote the corresponding sequence of orthogonal polynomials satisfying \eqref{TTRR}. Suppose that there exist complex numbers $c$, $(a_n)_{n\geq 0}$, $(b_n)_{n\geq 0}$ $($$b_n\not=0$$)$, and $(c_n)_{n\geq 0}$ such that
\begin{align}\label{22}
(x-c) D_{q, \omega}P_{n}(x)= a_nP_n(x) +(b_nx+c_n)P_{n-1}(x).
\end{align}
Then
\begin{align*}
{\bf D}_{1/q,-\omega/q}\big((x-c){\bf u}\big)=-\frac{qb_2}{C_2}(x-\lambda_+)(x-\lambda_{-}){\bf u},
\end{align*}
where 
\begin{align*}
\lambda_{\pm} =\frac12 (B_0+B_1)+\frac{c-B_0}{2b_2C_1}C_2 \pm \frac{ 1}{2} \left((B_0-B_1 -\frac{c-B_0}{b_2C_1}C_2)^2 +4C_1\right)^{1/2}.
\end{align*}
 If $q(\omega +qc-\lambda_+)(\omega +qc-\lambda_{-})=-C_2/b_2$, then ${\bf u}$ is ${\bf D}_{1/q,-\omega/q}$-classical. More precisely, $(P_n)_{n\geq 0}$ are the Al-Salam-Carlitz polynomials. Otherwise, ${\bf u}$ is a ${\bf D}_{1/q,-\omega/q}$-semiclassical of class one. Moreover, if 
\begin{align}\label{condt}
b_2C_1^2 =(B_0-c)(b_2(B_1-c)C_1-(B_0-c)C_2),
\end{align} 
then
\begin{align*}
{\bf u}=(x-c)^{-1}{\bf v}+\delta_c,
\end{align*}
${\bf v}$ being the linear form corresponding to the Al-Salam-Carlitz polynomials.
\end{theorem}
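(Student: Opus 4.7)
The argument parallels the proof of Theorem~\ref{theo-2b} but is substantially simpler because $s=1$ and $\deg\phi=1$. Write $\widetilde{\mathbf{D}}:=\mathbf{D}_{1/q,-\omega/q}$, and let $(\mathbf{a}_n)_{n\geq 0}$ be the dual basis of $(P_n)_{n\geq 0}$. Using the identity $\langle\widetilde{\mathbf{D}}\mathbf{w},f\rangle=-q\langle\mathbf{w},D_{q,\omega}f\rangle$ together with the hypothesis \eqref{22}, I compute
\[
\langle \widetilde{\mathbf{D}}((x-c)\mathbf{a}_n), P_j\rangle = -q\langle \mathbf{a}_n, a_j P_j + (b_j x + c_j)P_{j-1}\rangle,
\]
which, after using \eqref{TTRR} to expand $xP_{j-1}$, is nonzero only for $j\in\{n,n+1,n+2\}$. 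Expanding in the weak dual topology and specializing to $n=0$ (note $a_0=0$, $\mathbf{a}_0=\mathbf{u}_0^{-1}\mathbf{u}$ and $\mathbf{a}_k\propto P_k\mathbf{u}$) yields
\[
\widetilde{\mathbf{D}}((x-c)\mathbf{u}) = \left(-\frac{q(b_1B_0+c_1)}{C_1}P_1-\frac{qb_2}{C_2}P_2\right)\mathbf{u}.
\]
The relations $a_1+b_1=1$ and $c_1=a_1B_0-c$ (read off from \eqref{22} at $n=1$) give $b_1B_0+c_1=B_0-c$; matching the sum and product of the roots of the right-hand side with the $\lambda_\pm$ in the statement then yields the first displayed equation of the theorem.

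That equation is Pearson-type with $\deg\phi=1,\deg\psi=2$, so $\mathbf{u}$ has class at most $1$. To decide whether the class drops to $0$, I would derive an equivalent pair $(\widetilde\phi,\widetilde\psi)$ with $\deg\widetilde\phi\leq 2$ by multiplying through by $x-c$ and using the product rule for $\widetilde{\mathbf{D}}$ on forms (dualized from $D_{q,\omega}(fg)=(D_{q,\omega}f)(E_{q,\omega}g)+fD_{q,\omega}g$, where $E_{q,\omega}f(x)=f(qx+\omega)$). Chasing coefficients, the leading quadratic coefficient of the resulting $\widetilde\psi$ is proportional to $R(qc+\omega)-1$, where $R(x)=-\frac{qb_2}{C_2}(x-\lambda_+)(x-\lambda_-)$; it vanishes exactly when
\[
q(\omega+qc-\lambda_+)(\omega+qc-\lambda_-)=-C_2/b_2,
\]
which is the condition of the statement. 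In that case $\mathbf{u}$ is $\widetilde{\mathbf{D}}$-classical; matching the resulting $(\widetilde\phi,\widetilde\psi)$ with the classification in Theorem~\ref{Dqw-main-Thm} (through the explicit formulas for $B_n$ and $C_{n+1}$ given there) identifies $(P_n)_{n\geq 0}$ as the Al-Salam-Carlitz polynomials. Otherwise no such reduction is possible, and the minimality in the definition of class forces $\mathbf{u}$ to be semiclassical of class exactly $1$.

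For the last statement, set $\mathbf{v}:=(x-c)\mathbf{u}$; the identity $(x-c)^{-1}((x-c)\mathbf{u})=\mathbf{u}-\mathbf{u}_0\delta_c$ gives $\mathbf{u}=(x-c)^{-1}\mathbf{v}+\mathbf{u}_0\delta_c$. After the natural normalization $\mathbf{u}_0=1$, it remains to identify $\mathbf{v}$ with the Al-Salam-Carlitz form. Using the standard Christoffel-transform formulas expressing the recurrence coefficients $(\widetilde B_n,\widetilde C_n)$ of $\mathbf{v}$ in terms of $(B_n,C_n,c)$, the Al-Salam-Carlitz values of $(\widetilde B_0,\widetilde C_1)$ reduce, after simplification and using the already-established relations among $(B_0,B_1,C_1,C_2,b_2,c)$, precisely to \eqref{condt}. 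The main obstacle is the class-reduction step: the Hahn operator lacks an averaging operator analogous to $\mathcal{S}_q$ in the Askey-Wilson setting, so the reduction must be carried out using only the shift $E_{q,\omega}$ and its dual on forms, and isolating the clean condition $R(qc+\omega)=1$ as the obstruction to reducibility is the computational heart of the proof.
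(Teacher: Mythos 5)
Your first step is sound and coincides with the paper's: expanding $\mathbf{D}_{1/q,-\omega/q}((x-c)\mathbf{a}_n)$ in the dual basis, specializing to $n=0$, and using $a_1+b_1=1$, $c_1+b_1B_0=B_0-c$ gives exactly the displayed Pearson relation with the stated $\lambda_\pm$. The trouble starts with the dichotomy. The paper does not look for a pair with $\deg\widetilde\phi\le 2$; it unfolds the relation with the product rule $\mathbf{D}_{1/q,-\omega/q}\big((x-c)\mathbf{u}\big)=\mathbf{u}+q^{-1}(x-\omega-qc)\,\mathbf{D}_{1/q,-\omega/q}\mathbf{u}$, obtaining $(x-\omega-qc)\,\mathbf{D}_{1/q,-\omega/q}\mathbf{u}=q(R-1)\mathbf{u}$ in your notation. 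When $R(\omega+qc)=1$ the common zero cancels and one lands on a Pearson equation with $\phi=1$ and $\deg\psi=1$; it is precisely the constancy of $\phi$, together with the explicit $B_n$, $C_{n+1}$ of Theorem \ref{Dqw-main-Thm}, that identifies the Al-Salam--Carlitz polynomials (Theorem \ref{Dqw-main-Thm} is a regularity criterion, not a classification, so "some pair with $\deg\widetilde\phi\le2$" does not yet pin down the family). Your proposed manipulation ("multiplying through by $x-c$" and reading off a quadratic coefficient of $\widetilde\psi$) raises degrees rather than lowering them and, as described, would not produce the needed class-zero pair. Moreover, in the case $R(\omega+qc)\neq1$ you conclude "class exactly one" from "no such reduction is possible"; since the class is a minimum over \emph{all} admissible pairs, you must exclude the existence of any other pair of class zero. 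The paper does this with the coprimality/\emph{argumentum ad absurdum} of Theorem \ref{theo-2b} (a second pair would be proportional to the given coprime one, yielding a contradiction); your appeal to "minimality" is not an argument.

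The most serious gap is the final statement. Matching only $(\widetilde B_0,\widetilde C_1)$ of the Christoffel transform $\mathbf{v}=(x-c)\mathbf{u}$ against Al-Salam--Carlitz values cannot identify $\mathbf{v}$: that family has free parameters, so two equations are generically solvable, one would have to match all recurrence coefficients, and the regularity of $\mathbf{v}$ (equivalently $P_n(c)\neq0$) which your Christoffel formulas presuppose is never established. The point you miss is that \eqref{condt} is exactly the condition $\lambda_+=c$, i.e. $R(c)=0$. Then the first identity factors as $\mathbf{D}_{1/q,-\omega/q}\mathbf{v}=\kappa\,(x-c+\Delta^{1/2})\mathbf{v}$ for a constant $\kappa$, a classical Pearson equation with $\phi=1$; Theorem \ref{Dqw-main-Thm} then yields simultaneously the regularity of $\mathbf{v}$ and its recurrence coefficients, which are those of the Al-Salam--Carlitz polynomials, and $\mathbf{u}=(x-c)^{-1}\mathbf{v}+\mathbf{u}_0\delta_c$ (with the normalization $\mathbf{u}_0=1$) concludes, as in the paper. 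Without this observation your outline does not prove the last assertion.
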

\begin{proof}
As in the proof of Theorem \ref{theo-2b}, from \eqref{TTRR} and \eqref{22} we get
\begin{align*}
{\bf D}_{1/q,-\omega/q} \big((x-c){\bf a}_n \big) =-q(a_n+b_n){\bf a}_n-q(c_{n+1}+b_{n+1}B_n){\bf a}_{n+1}-q\,b_{n+2}C_{n+1}{\bf a}_{n+2},
\end{align*}
in the sense of the weak dual topology in $\mathcal{P}'$, $({\bf a}_n)_{n\geq 0}$ being the dual basis associated to $(P_n)_{n\geq 0}$. Taking $n=0$ in the above expression we have 
\begin{align}\label{distri-011}
{\bf D}_{1/q,-\omega /q}\big((x-c){\bf u} \big)=\varphi(x){\bf u},
\end{align} 
where $\varphi$ is a polynomial of degree two given by
\begin{align*}
\varphi=-\frac{q}{C_1C_2}((a_0+b_0)C_1C_2 +(r_1+b_1B_0)C_2P_1+b_2C_1P_2).
\end{align*}
Taking $n=0$, $n=1$, and $n=2$ in \eqref{22} we get
\begin{align*}
a_0+b_0=0,\qquad c_1+b_1B_0=B_0-c,\qquad b_2=q+1 +\frac{(B_0-c)(qB_0-B_1+\omega)}{C_1}.
\end{align*}
Hence $C_2\varphi(x)=-q\,b_2(x-\lambda_+)(x-\lambda_{-})$, and the first part of the theorem follows. Assume that $\varphi(\omega +q\,c)=1$. Recall that (see \cite[(2.10)]{ACMP1})
\begin{align*}
{\bf D}_{1/q,-\omega/q}(f{\bf u})=D_{1/q,-\omega/q}f\,{\bf u} + f\left((x-\omega)/q\right){\bf D}_{1/q,-\omega/q}{\bf u} \qquad (f\in \mathcal{P}).
\end{align*}
Using this identity,  \eqref{distri-011} becomes
\begin{align*}
(x-qc-\omega){\bf D}_{1/q,-\omega/q}{\bf u}=q(\varphi(x)-1){\bf u}.
\end{align*}
Since $\varphi(\omega +qc)=1$, $x-qc-\omega$ and $q(\varphi(x)-1)$ have a common zero at $x=qc+\omega$, and therefore there exists a polynomial of degree one, $Q_1$, such that $q(\varphi(x)-1)=(x-qc-\omega)Q_1(x)$, which gives
\begin{align*}
(x-qc-\omega)\Big(({\bf D}_{1/q,-\omega/q}{\bf u})-Q_1(x){\bf u}\Big)=0,
\end{align*}
and so
\begin{align*}
 {\bf D}_{1/q,-\omega/q}{\bf u}= \frac{1}{(q^{-1}-1)rs }(x-r-s-\omega/(1-q)){\bf u},
\end{align*}
for some nonzero complex numbers $r$ and $s$, i.e. $Q_1(x)=1/((q^{-1}-1){rs}) (x-r-s-\omega/(1-q))$. This last equation is of type \eqref{NUL-Pearson2} with $\phi=1$ and 
$\psi(x)=1/((q^{-1}-1){rs}) (x-r-s-\omega/(1-q))$. We claim that $\mathbf{u}$ is regular. Indeed, by Theorem \ref{Dqw-main-Thm},  $c=1$, $b=0$, $a=0$, $d=1/((q^{-1}-1)rs)$, and $e=-(r+s+\omega/(1-q))/((q^{-1}-1)rs)$. Hence 
$$
d_n=q^{-n}/((q^{-1}-1)rs) \neq 0, \qquad \phi=1\neq 0.
$$
Moreover, by Theorem \ref{Dqw-main-Thm}, we get
\begin{align*}
B_n&=-q^{-1}\omega[n]_{q^{-1}}+\frac{[n]_{q^{-1}}e_{n-1}}{d_{2n-2}}-\frac{[n+1]_{q^{-1}} e_n}{d_{2n}} \\[7pt]
&= \omega/(1-q) +(r+s)q^n,\\[7pt]
C_{n+1}&=-\frac{q^{-n}[n+1]_{q^{-1}}d_{n-1}}{d_{2n-1}d_{2n+1}}=-rs(1-q^{n+1})q^n, 
\end{align*}
and finally $P_n(x)=s^nU_n ^{(r/s)} ((x-\omega/(1-q))/s\,|\,q)$, where $(U_n^{(a)}(\cdot\,|\,q))_{n \geq 0}$ are the Al-Salam-Carlitz polynomial (see \cite[Section 14.24]{K}).  Assume now that $\varphi(\omega +qc)\neq 1$. Hence $x-\omega-qc$ and $\varphi(x)-1$ are coprime. Using the same {\em argumentum ad absurdum} as in Theorem \ref{theo-2b}, we see that ${\bf u}$ is a ${\bf D}_{1/q,-\omega/q}$-semiclassical form of class one. Now, from \eqref{distri-011}, and using \eqref{condt}, we get
$$\lambda_+=c, \qquad \lambda_{-}=c-\left(\left(B_0-B_1 -\frac{c-B_0}{b_2C_1}C_2\right)^2 +4C_1\right)^{1/2}.$$
Then \eqref{distri-011} becomes
\begin{align*}
{\bf D}_{1/q,-\omega /q}\big((x-c){\bf u} \big)=-\frac{qb_2}{C_2}(x-c)(x-c+\Delta^{1/2}){\bf u},
\end{align*} 
where $\Delta= \left(B_0-B_1 -\dps\frac{c-B_0}{b_2C_1}C_2  \right)^2 +4C_1$ or, equivalently, 
\begin{align*}
{\bf D}_{1/q,-\omega/q}((x-c){\bf u})=\frac{1}{(q^{-1}-1)rs}(x-c)(x-r-s-\omega/(1-q)){\bf u},
\end{align*}
where $r$ and $s$ are nonzero complex numbers such that $(q-1)r s=C_2/b_2$ and $r+s=c-\omega/(1-q)-\Delta^{1/2}$. Define ${\bf v}= (x-c){\bf u}$. Hence
\begin{align*}
{\bf D}_{1/q,-\omega/q}{\bf v}= \frac{1}{(q^{-1}-1)r s}(x-r-s-\omega/(1-q)){\bf v}.
\end{align*}
As above, by Theorem \ref{Dqw-main-Thm},  
$$
d_n=\frac{q^{-n}}{(q^{-1}-1)rs} \neq 0, \qquad \phi=1\neq 0.
$$
Moreover, also by Theorem \ref{Dqw-main-Thm}, ${\bf v}$ is the linear form corresponding to the Al-Salam-Carlitz polynomials, and the theorem follows.
\end{proof}

The next proposition gives an explicit example of semiclassical orthogonal polynomials of class one satisfying \eqref{22}, which prevents the reader from making any conjectures related to classical polynomials when faced with a relation of type \eqref{Askey} after changing the standard derivative by Hahn's operator.
\begin{proposition}
Fix $\omega, q \in \mathbb{C}$  such that \eqref{wq} hold. Fix $a, b \in \mathbb{C}$ such that
 $$
-a\not=b,\qquad b\not=0,\qquad  a+(-1)^nb -(a+b)q^n \neq 0,
 $$
for all $n \in \mathbb{N}$. Let $(P_n)_{n\geq 0}$  be a sequence of orthogonal polynomials satisfying \eqref{TTRR} with
\begin{align}\label{coef-new-ops-Hahn-case}
B_n=\frac{\omega}{1-q},\qquad C_{n}=(a+b)\left(\frac{a+(-1)^nb}{a+b}-q^n  \right)q^n.
\end{align}
Then $(P_n)_{n\geq 0}$ is a $D_{1/q,-\omega/q}$-semiclassical of class one and its corresponding linear form $\mathbf{u}\in \mathcal{P}'$ satisfies 
\begin{align*}
&{\bf D}_{1/q,-\omega/q}\left(\left(x-\frac{\omega}{1-q}\right)\mathbf{u}\right)\\[7pt]
&\quad =\frac{1}{(a+b)(q-1)}\left(\frac{1}{q}\left(x-\frac{\omega}{1-q}\right)^2 +b-a+(a+b)q\right){\bf u}.
\end{align*}
\end{proposition}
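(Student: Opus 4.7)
The plan is to reduce the proposition to an application of Theorem \ref{theo-22b} with $c = \omega/(1-q)$. Accordingly, the core task is to establish a structure relation of the form \eqref{22},
\begin{align*}
\left(x - \frac{\omega}{1-q}\right) D_{q,\omega} P_n(x) = a_n P_n(x) + (b_n x + c_n) P_{n-1}(x) \qquad (n \geq 1),
\end{align*}
with $b_n \neq 0$, for explicit sequences $(a_n)$, $(b_n)$, $(c_n)$. Once this is done, Theorem \ref{theo-22b} produces the claimed Pearson equation, modulo verifying that ${\bf u}$ is not ${\bf D}_{1/q,-\omega/q}$-classical.

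For the structure relation, I would follow the inductive pattern of Proposition \ref{Askey1}. First, verify the relation directly for $n = 0, 1, 2$ and read off the initial values of $a_n$, $b_n$, $c_n$; the alternating $(-1)^n$ appearing in $C_n$ suggests closed forms combining powers of both $q^n$ and $(-1)^n$. For the inductive step, apply $(x - c) D_{q,\omega}$ to \eqref{TTRR}, use the Leibniz rule
\begin{align*}
D_{q,\omega}(fg)(x) = (D_{q,\omega}f)(x)\, g(qx+\omega) + f(x)\, (D_{q,\omega}g)(x),
\end{align*}
substitute the inductive hypothesis for $D_{q,\omega} P_n$ and $D_{q,\omega} P_{n-1}$, and re-expand everything in the basis $\{P_{n-1}, P_n, P_{n+1}\}$ via \eqref{TTRR}. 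The resulting recursions for $a_{n+1}, b_{n+1}, c_{n+1}$ must be shown to be satisfied by the guessed closed forms, and one checks $b_n \neq 0$ for all $n$ along the way.

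With \eqref{22} in hand, the conclusion follows swiftly. Because $B_0 = B_1 = c$, the formula for $\lambda_{\pm}$ in Theorem \ref{theo-22b} collapses to $\lambda_{\pm} = c \pm C_1^{1/2}$, so $(x - \lambda_+)(x - \lambda_-) = (x-c)^2 - C_1$; and the same simplification in $b_2 = q + 1 + (B_0 - c)(qB_0 - B_1 + \omega)/C_1$ (from the proof of Theorem \ref{theo-22b}) gives $b_2 = q+1$. Using $C_1 = q(a-b - (a+b)q)$ and $C_2 = (a+b)q^2(1-q^2)$ from \eqref{coef-new-ops-Hahn-case}, one computes $-qb_2/C_2 = 1/(q(a+b)(q-1))$ and $-C_1 = q(b - a + (a+b)q)$; these combine to exactly the displayed right-hand side. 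To exclude the classical case, it suffices to check $q(\omega + qc - \lambda_+)(\omega + qc - \lambda_-) \neq -C_2/b_2$. Since $\omega + qc = c$, the left-hand side equals $-qC_1$, and a short manipulation shows the inequality is equivalent to $a - b \neq a + b$, i.e., $b \neq 0$, which is a standing hypothesis.

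The main obstacle is the inductive verification of the structure relation, especially identifying workable closed forms for $a_n$, $b_n$, $c_n$ and verifying that they satisfy the compatibility recursions that emerge. Given the parity-dependent nature of $C_n$, splitting the induction into even and odd $n$ would most likely streamline the algebra considerably.
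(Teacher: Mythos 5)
Your route is the paper's own: establish the structure relation \eqref{22} with $c=\omega/(1-q)$ by induction through \eqref{TTRR}, then invoke Theorem \ref{theo-22b}. The part you actually carry out is correct and agrees with the paper: since $B_0=B_1=c$ one gets $b_2=q+1$ and $\lambda_{\pm}=c\pm C_1^{1/2}$, so $-\frac{qb_2}{C_2}\bigl((x-c)^2-C_1\bigr)$ is exactly the displayed right-hand side, and, because $\omega+qc=c$, the classical alternative of Theorem \ref{theo-22b} reduces to $a-b=a+b$, which is excluded by $b\neq0$.

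The genuine gap is that the structure relation itself --- the heart of the proof --- is only announced, not proved: you exhibit neither the coefficients $a_n,b_n,c_n$ nor the verification of the recursions they must satisfy, and you explicitly defer this as ``the main obstacle''. The paper does precisely this work: \eqref{22} holds with
\begin{align*}
a_n=\frac{1+(-1)^{n+1}}{(1-q)(a+b)}\,b,\qquad b_n=[n]_q-a_n,\qquad c_n=-c\,b_n,
\end{align*}
so that $b_nx+c_n=b_n(x-c)$; the base case $n=1$ is checked directly, and the inductive step (applying $(x-c)D_{q,\omega}$ to \eqref{TTRR} with the Leibniz rule you quote) closes because $a_{n+1}=a_{n-1}$, $b_{n+1}=1-a_{n-1}+q(a_n+b_n)$ and $qb_nC_{n-1}=b_{n-1}C_n$. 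Note also that the standing hypothesis $a+(-1)^nb-(a+b)q^n\neq0$ is exactly what guarantees $b_n\neq0$ for odd $n$ (one computes $b_n=\bigl(a-b-(a+b)q^n\bigr)/\bigl((1-q)(a+b)\bigr)$ there, while $b_n=[n]_q\neq0$ for even $n\geq2$ by \eqref{wq}); this is a hypothesis of Theorem \ref{theo-22b} that you only promise to ``check along the way''. Until these closed forms are produced and the compatibility recursions verified, what you have is a correct plan for the first half of the argument together with a complete second half, not a proof.
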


\begin{proof}
We claim that $(P_n)_{n\geq 0}$ satisfies \eqref{22} with
$$
c=\frac{\omega}{1-q}, \qquad a_n=\frac{1+(-1)^{n+1}}{(1-q)(a+b)}\, b, \qquad b_n=[n]_q -a_n, \qquad c_n=-c b_n.
$$
Indeed, the proof is by induction on $n$. For $n=1$, LHS of \eqref{22} gives 
$$(x-c)D_{q,\omega}P_1(x)=x-c,$$ while RHS gives $$ a_1P_1(x) +b_1(x-c)P_0(x)=(a_1+b_1)(x-c)=x-c.$$  Assuming \eqref{22} to hold, with $k$ instead of $n$, for $k=1, 2, \dots, n$, we will prove it for $k=n+1$. Apply $(x-c)D_{q,\omega}$ to \eqref{TTRR} to obtain
\begin{align*}
(x-c)D_{q,\omega}\big((x-c)P_{n}(x)\big)=(x-c)D_{q,\omega}\big( P_{n+1}(x) +C_nP_{n-1}(x)\big).
\end{align*}
Using the identity (see \cite[(2.9)]{ACMP1})
$$D_{q,\omega}(fg)=g(qx+\omega)D_{q,\omega}f +fD_{q,\omega}g \qquad (f,g\in \mathcal{P}),$$
we get
\begin{align*}
(x-c)D_{q,\omega}P_{n+1}(x)&=q(x-c)^2D_{q,\omega}P_{n}(x) +(x-c)P_{n}(x) -(x-c)C_nD_{q,\omega}P_{n-1}(x).
\end{align*}
Now using successively \eqref{22}, with $k$ instead of $n$, for $k=n$ and $k=n-1$ and \eqref{TTRR} it follows that
\begin{align*}
(x-c)D_{q,\omega}P_{n+1}(x)&=q(x-c)(a_nP_{n}(x)+b_n(x-c)P_{n-1}(x)) \\[7pt]
&\quad +(x-c)P_{n}(x)-(a_{n-1}P_{n-1}(x)+b_{n-1}(x-c)P_{n-2}(x))C_n\\[7pt]
&=qa_n (P_{n+1}(x) +C_nP_{n-1}(x)) +qb_n(x-c)(P_{n}(x) +C_{n-1}P_{n-2}(x))\\[7pt]
&\quad +(x-c)P_{n}(x)-(a_{n-1}P_{n-1}(x)+b_{n-1}(x-c)P_{n-2}(x))C_n\\[7pt]
&=qa_nP_{n+1}(x)+(qa_n-a_{n-1})C_nP_{n-1}(x) +(1+qb_n)(x-c)P_{n}(x) \\[7pt]
&\quad +(qb_nC_{n-1}-b_{n-1}C_n)(x-c)P_{n-2}(x)\\[7pt]
&=a_{n-1}P_{n+1}(x) +(1-a_{n-1}+q(a_n+b_n))(x-c)P_{n}(x)\\[7pt]
&\quad +(qb_nC_{n-1}-b_{n-1}C_n)(x-c)P_{n-2}(x).
\end{align*}
The reader should convince himself that the following relations hold: $a_{n-1}=a_{n+1}$, $1-a_{n-1}+q(a_n+b_n)=b_{n+1}$, and $qb_nC_{n-1}-b_{n-1}C_n=0$. Thus \eqref{22} holds for $n+1$, and our claim follows. Note also that 
\begin{align*}
\qquad b_2=q+1, \qquad C_1=(a-b-(a+b)q)q,\qquad  C_2=(a+b)(1-q^2)q^2.
\end{align*}
Under the notation of Theorem \ref{theo-22b}, we get
\begin{align*}
\lambda_\pm&=c\pm (a-b-(a+b)q)^{1/2}q^{1/2},
\end{align*}
and so
\begin{align*}
&q(\omega +qc-\lambda_+)(\omega +qc-\lambda_{-})\\[7pt]
&\quad =(b-a+(a+b)q)q^2\not=(-b-a+(a+b)q)q^2=-\frac{C_2}{b_2},
\end{align*}
because $\omega+q\,c=c$. Thus, from Theorem \ref{theo-22b}, the result follows.
\end{proof}

If we replace, in Theorem  \ref{main-theorem}, the Askey-Wilson operator by the Hahn operator, then ${\bf S}_q$ becomes the identity, as in the case of the standard derivative, and Theorem \ref{main-theorem} $i)$ reduces to ${\bf D}_{q,\omega}(\rho {\bf u})=\psi {\bf u}$. In this context, an analogue of Theorem \ref{main-theorem} appears in Smaili's PhD thesis under the supervision of Maroni (see \cite[Theorem 1.1]{S87}).

\section*{Acknowledgements}
The authors thank T. H. Koornwinder for several helpful comments. The authors also thank P. Maroni for his comments, and for offering them an original printed copy of \cite{M87} during his last visit to Portugal. This work was supported by the Centre for Mathematics of the University of Coimbra-UIDB/00324/2020, funded by the Portuguese Government through FCT/ MCTES.  The first author thanks CMUP, University of Porto, for their support and hospitality. The second author also thanks CMUP for their support. CMUP is a member of LASI, which is financed by national funds through FCT, under the projects with reference UIDB/00144/2020 and UIDP/00144/2020.

 \end{document}